\tikzset{snake it/.style={decorate, decoration=snake}}
\theoremstyle{plain}
\newtheorem{thm}{Theorem}[section]
\newtheorem{cor}[thm]{Corollary}
\newtheorem{lem}[thm]{Lemma}
\newtheorem{prop}[thm]{Proposition}
\newtheorem{conj}[thm]{Conjecture}
\theoremstyle{definition}
\newtheorem{defn}[thm]{Definition}
\newtheorem{example}[thm]{Example}
\theoremstyle{remark}
\newtheorem{rmk}[thm]{Remark}
\let\@wraptoccontribs\wraptoccontribs
\begin{document}
\title{On the Perversity of Chern Classes for Compactified Jacobians}
\date{}

\author{Soumik Ghosh}
\address{Yale University}
\email{soumik.ghosh@yale.edu}
\date{}

\begin{abstract}
We prove some perversity bounds for the Chern classes of a compactified Jacobian fibration, namely the $k$-th Chern class of the compactified Jacobian has perversity $\leq k$. Our results are motivic in nature and we also prove a filtration version of a conjecture raised in \cite{bae2024generalizedbeauvilledecompositions}. 
\end{abstract}

\maketitle

\setcounter{tocdepth}{1} 

\tableofcontents
\setcounter{section}{-1}

\section{Introduction}
Throughout, we work with varieties/schemes over the complex numbers $\mathbb C$. 
\subsection{Overview}\label{sec0.1}
If $C$ is an integral, projective curve, the Jacobian $J:=\operatorname{Pic}^0(C)$ is not necessarily proper, but it admits a natural compactification $\bar J\supset J$ as in \cite{MR429908}, \cite{MR498546}. By definition, $\bar J$ is the moduli space of torsion-free sheaves $F$ on such that $F$ has generic rank one and $\chi(F) = \chi(\mathcal O_C)$. Now let $B$ be a non-singular variety and $\mathcal C\rightarrow B$ be a flat family of integral projective curves with a section. We assume that each fiber is an integral curve with arithmetic genus $g$ and at worst planar singularities. Then, one can consider the relative compactified jacobian fibration $\bar J_{\mathcal C}\xrightarrow{\pi}B$. We further assume $\bar J_{\mathcal C}$ is non-singular and that $\mathcal C \rightarrow B$ admits a section whose image is contained in the smooth locus of the fibers. 
Studying the interaction between natural cohomology classes and the perverse filtration has been an important topic in recent years; for example the $P=W$ conjecture in non-abelian Hodge theory has been proved by reducing to the interaction between tautological classes and the perverse filtration for the Hitchin system as in \cite{MR4792069}, \cite{hausel2025pwmathcalh2}, \cite{MR4877374}. For Lagrangian fibrations, the interaction between the Chern classes of the tangent bundle and the (motivic) perverse filtration is closely related to the Beauville-Voisin conjectures, as explained in \cite{bae2024generalizedbeauvilledecompositions}. The purpose of this paper is to study systematically the interaction between Chern classes of the tangent bundle and perverse filtrations for compactified Jacobian filtrations, both cohomologically and motivically.
In particular, a natural question is to give bounds for the perversity of the Chern classes $c_k(T_{\bar J_{\mathcal C}})$ both at cohomological and motivic level. 

In {Section \ref{sec1.1}}, we review the notion of a motivic perverse filtration as in \cite{MR4877374} and define a natural family of motivic perverse filtrations $\left \{P^{(\alpha)}_k\mathfrak h(\bar J_{\mathcal C}) \right \}_k$ indexed by $\alpha \in \mathbb Q$. The $\alpha$ in the motivic perverse filtration occurs  because of the flexibility of the Todd convention that arises due to Grothendieck-Riemann-Roch; indeed, for instance, in \cite{MR4877374}, the authors worked with the case $\alpha=0$ while in \cite{bae2024generalizedbeauvilledecompositions}, the authors worked with $\alpha=\frac{1}{2}$. Each motivic perverse filtration $\left \{P^{(\alpha)}_k \mathfrak h(\bar J_{\mathcal C})\right \}_k$ specializes to a perverse filtration $\left \{P^{(\alpha)}_k A_\bullet(\bar J_\mathcal C)\right\}_k$on the Chow groups with $\mathbb Q$-coefficients and the usual perverse filtration on the singular cohomology $H^\bullet (\bar J_{\mathcal C}, \mathbb Q )$.  In Section \ref{sec3.2}, we show that the apriori different perverse filtrations for different $\alpha$'s are the same on Chow groups (see Proposition \ref{prop3.2}), so the Chow- theoretic perverse filtration is intrinsic in some sense, which we denote by $\left\{P_kA^\bullet (\bar J)\right\}_k$. Regarding the perversity of the Chern classes $c_k (T_{\bar J_{\mathcal C}})$, one has the trivial bound $c_k(T_{\bar J_{\mathcal C}})\in P_{2k}H^k(\bar J_{\mathcal C}, \mathbb Q)$. For a general proper morphism between varieties over $\mathbb C$ say $X\xrightarrow {f}B$, given $\mathfrak z\in H^k(X,\mathbb Q)$, in general it is challenging to prove optimal non-trivial perversity bounds for $\mathfrak z$ following the usual definitions and methods as described in \cite{MR3752459} or \cite{MR2525735}. In this paper, we give optimal bounds for the Chern classes for a compactified Jacobian fibration using the interactions between Fourier-Mukai transforms and the perverse filtration as in \cite{MR4877374} and \cite{bae2024generalizedbeauvilledecompositions}. 

Our main result is the following:
\begin{thm}\label{thm0.1}
    Let $\bar J_{\mathcal C}\xrightarrow{\pi} B$ be as above. Then \begin{equation*} \begin{split} &c_k(T_{\bar J_{\mathcal C}})\in P_k A^k(\bar J_\mathcal C)=A^k(P^{(\alpha)}_k\mathfrak h(\bar J_{\mathcal C})),  \\ 
        & c_k(T_{\bar J_{\mathcal C}})\in P_kH^{2k}(\bar J_{\mathcal C}, \mathbb Q) 
         \end{split}
    \end{equation*} where the equality in the first statement follows from Proposition \ref{prop3.2}.
\end{thm}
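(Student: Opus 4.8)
The plan is to reduce the theorem to two ingredients established earlier (following \cite{MR4877374} and \cite{bae2024generalizedbeauvilledecompositions}): the compatibility of the relative Fourier--Mukai transform with the motivic perverse filtrations $\{P^{(\alpha)}_k\mathfrak h(\bar J_{\mathcal C})\}_k$, and a Grothendieck--Riemann--Roch computation of the Chern classes of the relative tangent complex in terms of the Fourier--Mukai kernel. First I would dispose of the base directions. Let $T_\pi:=[T_{\bar J_{\mathcal C}}\to\pi^\ast T_B]$ be the relative tangent complex, a perfect complex of virtual rank $g$ since $\pi$ is l.c.i.\ ($\bar J_{\mathcal C}$ and $B$ smooth); then $c(T_{\bar J_{\mathcal C}})=c(T_\pi)\cdot\pi^\ast c(T_B)$, and since $\pi^\ast A^\bullet(B)$ lies in the bottom piece $P_0$ and the perverse filtration is multiplicative, it suffices to prove $c_k(T_\pi)\in P_kA^k(\bar J_{\mathcal C})$; by Newton's identities and multiplicativity this is equivalent to $\mathrm{ch}_k(T_\pi)\in P_kA^k(\bar J_{\mathcal C})$. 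The cohomological statement then follows by the Betti realization $\mathfrak h(\bar J_{\mathcal C})\to H^\bullet(\bar J_{\mathcal C},\mathbb Q)$, under which $A^k(P^{(\alpha)}_k\mathfrak h(\bar J_{\mathcal C}))$ maps into $P_kH^{2k}(\bar J_{\mathcal C},\mathbb Q)$; the displayed equality in the first statement of the theorem is Proposition \ref{prop3.2}.

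To obtain a workable expression for $\mathrm{ch}(T_\pi)$, I would use deformation theory on the curve. Fixing a universal sheaf $\mathcal F$ on $\mathcal C\times_B\bar J_{\mathcal C}$ (rigidified by the section of $\mathcal C\to B$; the choice is immaterial) and writing $q\colon\mathcal C\times_B\bar J_{\mathcal C}\to\bar J_{\mathcal C}$ for the projection, the planarity of the fibers gives $T_\pi\cong \bigl(Rq_\ast R\mathcal{H}om(\mathcal F,\mathcal F)\bigr)_0[1]$ (traceless part), hence $[\mathcal O_{\bar J_{\mathcal C}}]-[T_\pi]=[Rq_\ast R\mathcal{H}om(\mathcal F,\mathcal F)]$ in $K_0(\bar J_{\mathcal C})$. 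Applying Grothendieck--Riemann--Roch to the l.c.i.\ morphism $q$ of relative dimension $1$ (with the usual care, since $\mathcal F$ fails to be perfect at the singular points of the fibers of $\mathcal C\to B$) then yields a formula of the shape
\[
\mathrm{ch}(T_\pi)=1-q_\ast\!\bigl(\mathrm{ch}(\mathcal F)^\vee\cdot\mathrm{ch}(\mathcal F)\cdot\mathrm{Td}(T_{\mathcal C/B})\bigr),
\]
exhibiting $\mathrm{ch}(T_\pi)$ as a polynomial, with coefficients pulled back from $B$, in the tautological classes $q_\ast(p^\ast\beta\cdot\mathrm{ch}_j(\mathcal F))$ on $\bar J_{\mathcal C}$.

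The final step is to read off perversity from this formula. The point is that, through the Fourier--Mukai transform with kernel $\mathcal F$ (equivalently, Arinkin's Poincar\'e sheaf $\mathcal P$ on $\bar J_{\mathcal C}\times_B\bar J_{\mathcal C}$), the tautological classes above --- and hence $\mathrm{ch}(T_\pi)$, up to Todd corrections whose precise shape is controlled by the parameter $\alpha$ --- are images of classes supported on low-dimensional loci (the curve $\mathcal C$, respectively the ``zero'' section $e\colon B\to\bar J_{\mathcal C}$, $e(b)=[\mathcal O_{C_b}]$, which lies in the smooth locus), whose perversity is minimal; this mirrors the classical fact that the Fourier transform of the structure sheaf of an abelian scheme is the structure sheaf of the zero section. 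Running this through the Fourier--Mukai/perverse compatibility, with the $\alpha$ for which the transform is an isometry for $\{P^{(\alpha)}_k\}$, the codimension-$k$ component of the expression for $\mathrm{ch}(T_\pi)$ lands in $P^{(\alpha)}_kA^k(\bar J_{\mathcal C})$, which gives $\mathrm{ch}_k(T_\pi)\in P_kA^k(\bar J_{\mathcal C})$ and hence the theorem.

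The step I expect to be the main obstacle is this last one: one must track how $\mathrm{Td}(T_{\mathcal C/B})$ and the combinatorics of $\mathrm{ch}(\mathcal F)^\vee\cdot\mathrm{ch}(\mathcal F)$ interact with the grading of the Fourier--Mukai transform and verify that no term escapes perversity $k$ in codimension $k$ --- precisely the place where the flexibility of the Todd convention (the parameter $\alpha$) and the isometric normalization of the transform are needed. It is worth noting that the cruder observation that $c_k(T_\pi)$ restricts on a general fiber to $c_k$ of the (trivial) tangent bundle of an abelian variety, hence to $0$, gives only perversity $\leq 2k-1$, so the full strength genuinely requires the Fourier--Mukai input. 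The deep inputs being quoted from the earlier sections --- the Fourier--Mukai/perverse compatibility for Arinkin's Poincar\'e sheaf over a possibly singular family $\mathcal C\to B$, and the computation of the Fourier--Mukai transform of the structure sheaf in that setting --- carry the real technical weight; granting them, Theorem \ref{thm0.1} follows from the degree-by-degree argument sketched above.
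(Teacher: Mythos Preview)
Your approach is genuinely different from the paper's, and the final step --- which you yourself flag as ``the main obstacle'' --- is where the gap lies. The paper never passes through deformation theory, GRR on the curve, or tautological classes $q_\ast(p^\ast\beta\cdot\mathrm{ch}_j(\mathcal F))$. Instead it works directly with the cotangent bundle on $\bar J_{\mathcal C}$: the exact sequence $0\to\pi^\ast\Omega_B\to\Omega_{\bar J}\to\Omega_{\bar J/B}\to 0$ exhibits $\Omega_{\bar J}$ as an extension of $J$-equivariant bundles, so Proposition~\ref{prop2.5} applies to $\mathcal K=\psi^N(\Omega_{\bar J})$ and gives $\operatorname{codim}\operatorname{Supp}FM_{\mathcal P^{-1}}(\psi^N\Omega_{\bar J})\ge g$. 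Running the Adams-operation/Vandermonde argument of Section~\ref{sec3.1} then yields the single vanishing $\bigl(\mathfrak F_\alpha^{-1}\bigr)_i\bigl(c_j(T_{\bar J})\bigr)=0$ for $i+j<2g$ (equation~\eqref{eq52}). This immediately gives $\mathfrak p_k^{(\alpha)}(c_k(T_{\bar J}))=c_k(T_{\bar J})$ by the definition of the projectors, and the theorem follows.

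By contrast, your final step asks one to locate the perversity of the quadratic expression $q_\ast\bigl(\mathrm{ch}(\mathcal F)^\vee\cdot\mathrm{ch}(\mathcal F)\cdot\mathrm{Td}(T_{\mathcal C/B})\bigr)$ via some Fourier/perverse compatibility, but you do not specify which compatibility statement you are invoking or why it controls such a product; the analogy with the Fourier transform of $\mathcal O$ being supported on the zero section does not by itself tell you where $\mathrm{ch}_k(T_\pi)$ sits. Even granting perversity bounds for the linear tautological classes (which are not established in the paper for this setting), the pushforward along $q$ of a \emph{product} of Chern characters on $\mathcal C\times_B\bar J$ is not a product of tautological classes on $\bar J$, so multiplicativity of the perverse filtration does not close the argument. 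The paper's key observation --- that $\Omega_{\bar J}$ itself is, up to the extension above, $J$-equivariant, so one can feed it directly into the Arinkin-type support estimate --- sidesteps all of this and is the step you are missing.
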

Since the perverse filtrations $\left\{P_k A^\bullet\right\}_k $ and $\left\{P_k H^\bullet\right\}_k $ are multiplicative with respect to the cup-product (see Section \ref{sec3.3}), Theorem \ref{thm0.1} also locates any monomials of the Chern classes in the perverse filtration, namely \begin{equation*}
\begin{aligned}
  &  \prod_{i=1}^s c_{k_i}(T_{\bar J_{\mathcal C}}) \in P_{\sum_{i=1}^s k_i} A^{\sum_{i=1}^s k_i} ( \bar J_{\mathcal C} ), \\    &  \prod_{i=1}^s c_{k_i}(T_{\bar J_{\mathcal C}}) \in P_{\sum_{i=1}^s k_i} H^{\sum_{i=1}^s 2k_i} ( \bar J_{\mathcal C}, \mathbb Q ).
\end{aligned}
\end{equation*} 

We briefly explain how this relates to some conjectures in \cite{bae2024generalizedbeauvilledecompositions}. We state the conjectures in the language of relative Chow motives of Corti-Hanamura; see Section \ref{sec1.1} for the meanings of the notations.    

\begin{conj}\label{conj0.2} [\cite{bae2024generalizedbeauvilledecompositions} Conjecture $3.1$, Conjecture $3.6$] Let $\bar J_{\mathcal C} \xrightarrow{\pi}B$ be as above i.e. a compactified Jacobian fibration associated to a flat family of integral projective locally planar curves of arithmetic genus $g$ over a non-singular base $B$.  We further assume that $\bar J_{\mathcal C}$ is non-singular and $\pi$ is Lagrangian with respect to a holomorphic symplectic form on $\bar J_{\mathcal C}$. Then there exists a decomposition \begin{equation*}
    \mathfrak h(\bar J_{\mathcal C})= \bigoplus_{i=0}^{2g} \mathfrak h_i (\bar J_{\mathcal C}) \in  \operatorname{CHM}(B), \ \mathfrak h_i(\bar J_{\mathcal C}) =(\bar J_{\mathcal C}, \bar{\mathfrak p}_i,0)
\end{equation*}whose homological realization splits the perverse filtration on $R\pi_* \mathbb Q_{\bar J_{\mathcal C}}$, and satisfies: 
\begin{enumerate}
    \item stability under Fourier transform i.e. $$\bar {\mathfrak p}_j \circ \mathfrak F \circ \bar{\mathfrak p}_i = 0 , \ i+j\neq 2g$$
    \item multiplicativity with respect to the cup product i.e. $$\bar{\mathfrak p}_k \circ [\Delta^{sm}_{\bar J_{\mathcal C}/B}]\circ (\bar {\mathfrak p}_i \times  \bar{\mathfrak p}_j )=0 $$
    \item $c_{2i}(T_{\bar J_{\mathcal C}}) \in A^{2i}\mathfrak h_{2i}(\bar J_{\mathcal C})$. 
\end{enumerate}
\end{conj}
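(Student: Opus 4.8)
\emph{Reductions.} The plan is to first peel off the base contribution, reduce to a single motivic assertion, and then run a Grothendieck--Riemann--Roch computation controlled by the Fourier--Mukai autoduality of $\bar J_{\mathcal C}$. From the relative tangent sequence $0\to T_{\bar J_{\mathcal C}/B}\to T_{\bar J_{\mathcal C}}\to\pi^{*}T_{B}\to 0$ we get $c(T_{\bar J_{\mathcal C}})=c(T_{\bar J_{\mathcal C}/B})\smile\pi^{*}c(T_{B})$ with $\pi^{*}c(T_{B})\in P_{0}$, so by the multiplicativity of the perverse filtrations (Section \ref{sec3.3}) it is enough to bound $c_{k}(T_{\bar J_{\mathcal C}/B})$. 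By Proposition \ref{prop3.2} the Chow assertion $c_{k}(T_{\bar J_{\mathcal C}/B})\in P_{k}A^{k}(\bar J_{\mathcal C})$ is equivalent to $c_{k}(T_{\bar J_{\mathcal C}/B})\in A^{k}(P^{(\alpha)}_{k}\mathfrak h(\bar J_{\mathcal C}))$ for any single value of $\alpha$, and applying the realization functor --- which carries $A^{k}(P^{(\alpha)}_{k}\mathfrak h)$ into $P_{k}H^{2k}(\bar J_{\mathcal C},\mathbb Q)$ --- then produces the cohomological statement. Everything therefore reduces to proving the motivic statement for one conveniently chosen $\alpha$.

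\emph{GRR and tautological classes.} Let $\mathcal F$ be a universal (twisted) sheaf on $\mathcal C\times_{B}\bar J_{\mathcal C}$ and $p\colon\mathcal C\times_{B}\bar J_{\mathcal C}\to\bar J_{\mathcal C}$, $q\colon\mathcal C\times_{B}\bar J_{\mathcal C}\to\mathcal C$ the projections. Deformation theory of the fibral sheaves gives $T_{\bar J_{\mathcal C}/B}\cong\mathcal{E}xt^{1}_{p}(\mathcal F,\mathcal F)$, and, since locally planar curves are Gorenstein and lci over $B$, Grothendieck--Riemann--Roch for $p$ expresses $\operatorname{ch}(T_{\bar J_{\mathcal C}/B})$ --- together with correction terms from the higher relative $\mathcal{E}xt$-sheaves, which are torsion and supported over the discriminant --- as a universal polynomial in the tautological classes $p_{*}(q^{*}\gamma\smile\operatorname{ch}_{a}(\mathcal F)\smile\operatorname{ch}_{b}(\mathcal F))$, $\gamma\in A^{\bullet}(\mathcal C)$, the relevant Todd factor $\operatorname{td}(T_{p})$ being pulled back along $q$. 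It thus suffices to show that all of these classes enter $\operatorname{ch}_{j}(T_{\bar J_{\mathcal C}/B})$ only through combinations lying in $P^{(\alpha)}_{j}$ for a well-chosen $\alpha$ that absorbs the Todd corrections; Newton's identities, the multiplicativity of Section \ref{sec3.3}, and induction on $k$ then carry the bound from the $\operatorname{ch}_{j}$ to the $c_{k}$.

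\emph{Autoduality and perversity.} The perversity bookkeeping is supplied by Arinkin's autoduality: there is a Poincaré sheaf $\mathcal P$ on $\bar J_{\mathcal C}\times_{B}\bar J_{\mathcal C}$, Cohen--Macaulay and flat over each factor, restricting to the classical Poincaré line bundle over the locus of smooth curves, and defining a $B$-linear Fourier--Mukai equivalence $\mathfrak F$ which is, up to shift and the dualizing involution, an involution, and which sends $\mathcal O_{\bar J_{\mathcal C}}$ to a shift of the structure sheaf of the section. Following the mechanism of \cite{MR4877374} and \cite{bae2024generalizedbeauvilledecompositions}, $\mathcal P$ induces a self-correspondence of $\mathfrak h(\bar J_{\mathcal C})\in\operatorname{CHM}(B)$ that exchanges the intersection product with the relative convolution and interchanges the motivic perverse filtration $P^{(\alpha)}_{\bullet}$ with the co-perverse (quotient) filtration about the middle degree $g$ --- the filtration-level form of the Beauville-type symmetry $\mathfrak h_{i}\leftrightarrow\mathfrak h_{2g-i}$ of Conjecture \ref{conj0.2}, not presupposing the conjectural splitting. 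Since $\mathcal F$ and $\mathcal P$ are sheaves concentrated in a single degree, the universal object underlying the tautological classes sits, after $\mathfrak F$, in the bottom piece of the co-perverse filtration; passing this back through the interchange converts the trivial bound $\operatorname{ch}_{j}(T_{\bar J_{\mathcal C}/B})\in P^{(\alpha)}_{2j}$ into the optimal $\operatorname{ch}_{j}(T_{\bar J_{\mathcal C}/B})\in P^{(\alpha)}_{j}$, and the discriminant-supported corrections of the previous paragraph are controlled in the same way. Combined with the reductions this gives $c_{k}(T_{\bar J_{\mathcal C}/B})\in A^{k}(P^{(\alpha)}_{k}\mathfrak h(\bar J_{\mathcal C}))$ and hence the theorem.

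\emph{Main obstacle.} The crux is the perversity interchange for $\mathfrak F$, to be proved over a general base $B$ with genuinely singular fibers and without the conjectural splitting of $\mathfrak h(\bar J_{\mathcal C})$. This demands (i) promoting Arinkin's derived autoduality to a morphism of relative Chow motives compatible with the Corti--Hanamura / Deninger--Murre formalism; (ii) controlling, through Ng\^o's support theorem and relative hard Lefschetz, how $\mathcal P$ interacts with ${}^{p}\mathcal H^{\bullet}(R\pi_{*}\mathbb Q)$ and its IC-extensions across the discriminant, so that the interchange lands with exactly the shift producing the sharp exponent $k$ rather than anything larger --- and so that the discriminant-supported correction terms from the GRR step are also pinned down; and (iii) tracking the Todd/$\alpha$ bookkeeping, the flexibility in $\alpha$ being precisely what makes the numerology come out optimal, with Proposition \ref{prop3.2} guaranteeing the final bound is independent of this auxiliary choice. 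A secondary nuisance is that $\bar J_{\mathcal C}\to B$ carries no group structure, so the relative convolution, the dualizing involution, and the double-transform identity must be imported from Arinkin's work in place of the textbook abelian-scheme formalism, and one must check these descend to $\operatorname{CHM}(B)$.
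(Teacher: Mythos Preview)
The statement you were handed is Conjecture~\ref{conj0.2}, which the paper explicitly presents as an \emph{open conjecture} taken from \cite{bae2024generalizedbeauvilledecompositions}; it is not proved in the paper. What the paper does prove is the weaker \emph{filtration version}, Theorem~\ref{thm0.1}: $c_k(T_{\bar J})\in P_k A^k(\bar J)$ and $c_k(T_{\bar J})\in P_k H^{2k}(\bar J,\mathbb Q)$. Your proposal does not aim at the full conjecture either --- you never construct the splitting $\mathfrak h=\bigoplus_i\mathfrak h_i$ or verify conditions (1)--(3) for it --- but rather at this same filtration bound. So your write-up is really an attempted proof of Theorem~\ref{thm0.1}, and I compare it against the paper's proof of \emph{that}.

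Your route and the paper's are genuinely different. You want to express $\operatorname{ch}(T_{\bar J/B})$ via Grothendieck--Riemann--Roch in tautological classes coming from a universal sheaf on $\mathcal C\times_B\bar J$, and then invoke a motivic Fourier interchange of the perverse and co-perverse filtrations to cut the trivial bound $2j$ down to $j$. The paper bypasses both steps. Its key observation is simply that $\Omega_{\bar J}$ sits in the short exact sequence $0\to\pi^*\Omega_B\to\Omega_{\bar J}\to\Omega_{\bar J/B}\to 0$ with $J$-equivariant graded pieces, so Proposition~\ref{prop2.5} applies directly: $FM_{\mathcal P^{-1}}(\psi^N\Omega_{\bar J})$ has support of codimension $\ge g$ in $\bar J$. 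Feeding this into the Adams-operation identity \eqref{eq28} separates the graded pieces and yields the clean vanishing $(\mathfrak F_\alpha^{-1})_i(c_j(T_{\bar J}))=0$ for $i+j<2g$ (Equation~\eqref{eq52}), from which $\mathfrak p_k^{(\alpha)}(c_k(T_{\bar J}))=c_k(T_{\bar J})$ follows in two lines (Corollary~\ref{cor4.1}).

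The gap in your proposal is exactly the one you flag in your final paragraph: the ``perversity interchange for $\mathfrak F$'' at the motivic level, over a base with singular fibers and without the conjectural splitting, is the hard part, and you do not supply it --- you only reduce the theorem to it. The paper's argument is more economical precisely because it never needs such an abstract interchange statement; the fiberwise codimension estimate (proved via the theorem of the square, the Abel--Jacobi map, and the Severi inequality) plus Adams operations does all the work, with no appeal to Ng\^o's support theorem, relative hard Lefschetz, or IC-extensions across the discriminant. Your GRR detour through $\mathcal C\times_B\bar J$ also adds a layer you would have to unwind: the tautological classes live relative to $\mathcal C$, not $\bar J$, so their interaction with the Fourier transform on $\bar J\times_B\bar J$ would need a separate compatibility argument that the paper's direct treatment of $\Omega_{\bar J}$ avoids entirely.
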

Conjecture \ref{conj0.2} implies that for such a compactified Jacobian fibration $c_k(T_{\bar J_{\mathcal C}})\in P_k H^{2k}(\bar J, \mathbb Q)$ for all $k$. We prove this perversity bound for any compactified Jacobian fibration $\bar J_{\mathcal C}$ with $\bar J_{\mathcal C}$ non-singular (without any Lagrangian assumption), on a motivic level, thereby proving the filtration version of the above conjecture. Note that an analogous perversity bound for the Hitchin system is a consequence of the $P=W$ Theorem. 

To simplify notation, we shall henceforth denote $\bar J_{\mathcal C}$ by $\bar J$ and $J_{\mathcal C}$ the Jacobian fibration by $J$. So $J$ is an open dense subset of $\bar J$ and forms a commutative group scheme over $B$.  
\subsection{Outline of the Paper} In {Section \ref{sec1.1}}, we review the relative Chow motives of Corti-Hanamura, motivic perverse filtration and construct the family of motivic perverse filtrations, mentioned in {Section \ref{sec0.1}}. In {Section \ref{sec2}}, we recall the existence and properties of a Poincare sheaf on $\bar J \times_B \bar J$, as constructed by Arinkin in \cite{MR3019453} and recall the notion of a Fourier transform. We also prove some co-dimension estimates. In Sections \ref{sec3.1} and \ref{sec3.2}, we use the co-dimension estimates of Section \ref{sec2.2} to prove various vanishing relations, namely a twisted Fourier vanishing and construct a family of motivic perverse filtrations. We further show that at the level of Chow groups they induce the same filtration. In Section \ref{sec3.3} we show that the motivic perverse filtrations as constructed are multiplicative. In Section \ref{sec4.1}, we prove more vanishing relations involving Fourier transforms of the Chern classes and use the relations to prove the perversity bounds in Section \ref{sec4.2}. Finally, in Section \ref{Examples}, we give examples to show that the bound we get is optimal. 
\subsection{Acknowledgments} The author is grateful to Junliang Shen for his valuable guidance and helpful discussions, and Dima Arinkin for his valuable insight. The author would also like to thank David Fang and David Bai for helpful discussions. 
\section{Motivic Perverse Filtrations}
\subsection{Relative Chow Motives and Perverse Filtrations}\label{sec1.1} We work with the relative Chow motives of Corti-Hanamura, see \cite{MR1763656} and \cite{MR2330158} for more details. We fix a non-singular base variety $B$. All Chow groups $A_\bullet$ are taken with $\mathbb Q$-coefficients. 

Let $f_1:X_1\rightarrow B$ and $f:X_2\rightarrow B$ be two proper morphisms with $X_1,X_2$ non-singular. If $X_2$ is equi-dimensional, we define the group of degree $k$ relative
 correspondences between $X_1$ and $X_2$ to be \begin{equation*}
     \operatorname{Corr}^k_B(X_1,X_2):=A_{\dim X_2-k}(X_1\times_B X_2)
 \end{equation*}If $X_2$ is not equi-dimensional we decompose $X_2$ into connected components and define the groups accordingly. If $\Gamma\in \operatorname{Corr}^{k}_B(X_1,X_2), \ \Gamma'\in \operatorname{Corr}^{k'}_B (X_2,X_3)$, we define their composition by \begin{equation}\label{eq1}
     \Gamma'\circ\Gamma := {p_{13}}_*\delta_{X_2}^!(\Gamma \times \Gamma')\in \operatorname{Corr}^{k+k'}_B (X_1,X_3) 
 \end{equation}where $p_{13}$ is the projection onto the first and third factors $X_1\times_B X_2\times_B X_3\xrightarrow{p_{13}}X_{1}\times_B X_3$ and $$\delta^!_{X_2}:A_{\bullet}(X_1\times_B X_2\times X_2\times_B X_3)\rightarrow A_{\bullet- \dim X_2}(X_1\times_B X_2\times_B X_3)$$ is the refined Gysin homomorphism with respect to the regular embedding $\Delta_{X_2}: X_2\rightarrow X_2\times X_2$ as defined in \cite{MR1644323}.

 We also define a Chow-theoretic convolution. Say $\Gamma_i\in \operatorname{Corr}^{k_i}_B(Y_i,X_i):= A_{\dim X_i-k_i}(Y_i\times_B X_i), \ i=1,2$ and $\Gamma\in \operatorname{Corr}^l_B(X_1,X_2;X_3):=A_{\dim X_3 - l}(X_1\times_B X_2\times_B X_3)$. We want to make sense of $$\Gamma\circ (\Gamma _1 \times \Gamma_2)\in \operatorname{Corr}_B^{k_1+k_2+l}(Y_1,Y_2;X_3).$$We define this by first pulling back $\Gamma\times \Gamma_1$ along $X_1$ to get an element in $$\Gamma\circ \Gamma_1\in \operatorname{Corr}^{l+k_1}_B(Y_1,X_2; X_3)$$ and then pulling this back along $X_2$ to get $$\Gamma\circ (\Gamma_1\times \Gamma_2)\in \operatorname{Corr}^{l+k_1+k_2}_B( Y_1,Y_2;X_3).$$We have kept our notation consistent with \cite{MR4877374} Section $2.2$. 

If $\mathfrak Z \in \operatorname{Corr}_B^\bullet(X,Y)$, then we have a morphism given by the Fourier-Mukai transform \begin{equation*}
    \begin{aligned}
       & \mathfrak Z: A_\bullet (X) \rightarrow A_\bullet (Y) \\ & z\mapsto {p_Y}_*\left (p_X^* z \cdot \iota_*\mathfrak Z\right )
    \end{aligned}
\end{equation*}where $\iota: X\times_B Y\hookrightarrow X\times Y$ is the inclusion, $p_X, p_Y$ are the two projections $X\times Y\rightarrow X, Y$. 

Under the  identification $\operatorname{Corr}_B ^\bullet(B, X)=A_\bullet(X)$, we have for $\mathfrak Z\in \operatorname{Corr}^\bullet_B(B, X)$, the equality \begin{equation*}
    \mathfrak Z(z)=\mathfrak Z\circ z
\end{equation*}
where the LHS is given by the Chow-theoretic Fourier-Mukai transform and the RHS is given by the convolution of correspondences as in Equation (\ref{eq1}).

 The category of relative Chow motives, denoted by $\operatorname{CHM}(B)$ consists of triples $(X,\mathfrak p, m)$ where $X$ is a non-singular variety proper over $B$, $\mathfrak p\in \operatorname{Corr}^0_B(X,X)$ is a projector and $m\in \mathbb Z$ takes care of Tate twists. Given any non-singular variety $X$, proper over $B$, we have the motive $\mathfrak h(X):=(X, [\Delta_{X/B}],0)$. Morphisms in this category are defined by \begin{equation*}
     \operatorname{Hom}_{\operatorname{CHM}(B)}\left((X_1,\mathfrak p,m),(X_2,\mathfrak q,n) \right):= \mathfrak q\circ \operatorname{Corr}^{n-m}_B (X_1,X_2)\circ \mathfrak p.
 \end{equation*}Given a Chow motive $\mathfrak M=(X,\mathfrak p,m)$ in $\operatorname{CHM}(B)$, we define its Chow groups by $$A^k(\mathfrak M):= \operatorname{Hom}_{\operatorname{CHM}(B)}\left (\mathfrak h(B)(-k), \mathfrak M \right )$$ where $\mathfrak h(B)(-k)=(B\xrightarrow{id}B, \Delta_{B/B}=[B], -k)\in \operatorname{CHM}(B)$.

We have a well-defined Corti-Hanamura realization functor \begin{equation}
    \operatorname{CHM}(B)\rightarrow D^b_c(B),
\end{equation}sending $(X\xrightarrow{f}B,\mathfrak p, m)$ to $\mathfrak p_* \left(f_* \mathbb Q_X[2m] \right)$ which further specializes by taking global cohomology.

We briefly recall the definition of a motivic perverse filtration as introduced in \cite{maulik2024algebraiccycleshitchinsystems}.
If $X\rightarrow B$ is a proper morphism with $X,B$ non-singular varieties, we set $R_f:=\dim X\times_B X-\dim X$ to be the defect of semi-smallness.  
\begin{defn}\label{def1.1}
    We say a sequence of motives $P_k\mathfrak h(X)=(X,\mathfrak p_k,0), \ 0\leq k\leq 2R_f$, which are summands of $h(X)$ form a motivic perverse filtration for $f$ if \begin{itemize}
        \item (Termination) $P_{2R_f}\mathfrak h(X)=\mathfrak h(X)$
        \item (Realization) Under the Corti-Hanamura functor (\cite{MR1763656}), the natural inclusion \begin{equation*}P_\bullet \mathfrak h(X)\hookrightarrow \mathfrak h(X)\end{equation*} specializes to the natural inclusion induced by the perverse truncation functor \begin{equation*}
            {}^p\tau_{\leq \bullet +\dim X-R_f}Rf_* \mathbb Q_X \hookrightarrow Rf_*\mathbb Q_X\end{equation*}
        \item  (Semi-Orthogonality) $\mathfrak p_{k+1}\circ \mathfrak p_k=\mathfrak p_k$
    \end{itemize} 
    Further, we say a motivic perverse filtration is multiplicative if \begin{equation*}
        \left ([\Delta_{X/B}] - \mathfrak p_{k+l} \right ) \circ [\Delta^{sm}_{X/B}] \circ (\mathfrak p_k \times \mathfrak p_l) =0 \ \forall \ k,l
    \end{equation*}where $\Delta^{sm}_{X/B}$ is the small diagonal. 
\end{defn}
We briefly recall some facts about perverse filtrations which shall be used in the later sections.

Let $f: X \to B$ be a proper morphism between nonsingular varieties with equi-dimensional fibers. By the decomposition theorem \cite{MR4870047}, we have
\[
f_* \mathbb Q_X \simeq \bigoplus_{i} {^{p}}\mathcal H^i(f_* \mathbb Q_X)[-i] \in D^b_c(B)
\]
where $D^b_c(-)$ stands for the bounded derived category of constructible sheaves and all functors are derived. Each perverse cohomology on the right-hand side is a semisimple perverse sheaf. We say that $f$ has \emph{full support}, if each simple summand of ${^{p}}\mathcal H^i(f_* \mathbb Q_X)$ has support $B$.
\begin{prop}[\cite{MR2653248} Théorème $7.2.1$, \cite{MR3259038} Theorem $2.4$]\label{prop1.2}
    If $\bar J\xrightarrow[]{\pi}B$ is a compactified Jacobian fibration with $\bar J$ non-singular, then $\pi$ has full support.
\end{prop}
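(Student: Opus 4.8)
The plan is to deduce this from Ng\^o's support theorem (\cite{MR2653248}, Th\'eor\`eme $7.2.1$) by checking that $\pi$ is a $\delta$-regular polarisable weak abelian fibration; this is the route taken by Migliorini--Shende in \cite{MR3259038}. By the decomposition theorem one may write $\pi_*\mathbb Q_{\bar J}[\dim\bar J]\simeq\bigoplus_{i}\bigoplus_{Z}\operatorname{IC}_Z(L_{Z,i})[-i]$, and the goal is to show that every support $Z$ actually occurring equals $B$. First I would record the structural facts: the relative degree-$0$ Picard scheme $P:=\operatorname{Pic}^0_{\mathcal C/B}$ is a smooth commutative group scheme over $B$ acting on $\bar J$, the action is transitive on the smooth locus of each fibre with affine stabilisers, and the fibration is polarisable --- the latter being precisely the autoduality furnished by Arinkin's Poincar\'e sheaf on $\bar J\times_B\bar J$ recalled in Section~\ref{sec2} (over the smooth locus one may instead use the theta polarisation of the Jacobian). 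For $b\in B$, the Chevalley decomposition $0\to R_b\to P_b\to A_b\to 0$ has affine part of dimension $\dim R_b=\delta(C_b)$ and abelian part of dimension $\dim A_b=g-\delta(C_b)$, where $\delta(C_b)$ is the total $\delta$-invariant of the fibre curve $C_b$; moreover $\pi$ has equidimensional fibres of dimension $g$, since the compactified Jacobian of an integral locally planar curve is irreducible of dimension equal to the arithmetic genus (Altman--Iarrobino--Kleiman; Rego).

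Next I would apply Ng\^o's support inequality: every occurring $Z$ satisfies $\operatorname{codim}_B Z\le\delta_Z$, where $\delta_Z:=\delta(C_\eta)$ for $\eta$ the generic point of $Z$; in particular $Z\subseteq B_{\ge\delta_Z}$, with $B_{\ge\delta}:=\{b\in B:\delta(C_b)\ge\delta\}$. The decisive geometric estimate is that $\operatorname{codim}_B B_{\ge\delta}\ge\delta$ for every $\delta$. I would prove it by comparing, \'etale-locally near $b$, the family $\mathcal C\to B$ with the semiuniversal deformation of the finite union of the planar singularities of $C_b$, and invoking the classical fact that in the semiuniversal deformation of a reduced planar curve singularity of $\delta$-invariant $\delta$ the equigeneric ($\delta$-constant) stratum has codimension exactly $\delta$ (Teissier; Diaz--Harris; Fantechi--G\"ottsche--van Straten). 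Combined with $Z\subseteq B_{\ge\delta_Z}$ this forces $\operatorname{codim}_B Z\ge\delta_Z$, hence $\operatorname{codim}_B Z=\delta_Z$ and $Z$ is an irreducible component of $B_{\ge\delta_Z}$: this is the ``$\delta$-regularity'' required by Ng\^o's theorem.

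It remains to upgrade $\operatorname{codim}_B Z=\delta_Z$ to $\delta_Z=0$, i.e. $Z=B$. Here I would invoke the second, ``freeness'' half of Ng\^o's support theorem together with the Verdier self-duality of $\pi_*\mathbb Q_{\bar J}[\dim\bar J]$ (valid because $\bar J$ is non-singular) and the homological freeness of $R\pi_*\mathbb Q_{\bar J}$ as a graded module over $R\rho_*\mathbb Q_P$, $\rho\colon P\to B$, coming from the $P$-action: a simple summand on a strict support $Z$ would contribute to the stalk of $\pi_*\mathbb Q_{\bar J}$ at a general point of $Z$ in top perverse degree, while the codimension equality just obtained, the action there of the $\delta_Z$-dimensional affine part, and duality together show that the IC sheaf on all of $B$ --- equivalently the extension of $\bigoplus_i\bigwedge^i R^1\pi^\circ_*\mathbb Q[-i]$ from the smooth locus $\pi^\circ$ --- already accounts for the entire stalk cohomology of $\pi_*\mathbb Q_{\bar J}$ at generic points of the strata $B_{\ge\delta}$; an induction on $\delta$ starting from the open dense stratum $B_{\ge 0}=B$ then removes every strict support. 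Concretely, the stalk computation amounts to matching the known Betti numbers of compactified Jacobians of locally planar curves against the contribution predicted by $\operatorname{IC}_B$.

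The main obstacle is precisely this last step. Verifying that $\pi$ is a polarisable weak abelian fibration (granting Arinkin's construction of Section~\ref{sec2}) and establishing the codimension estimate on the $\delta$-strata are comparatively formal; but deducing that \emph{no} strict support occurs genuinely uses the finer, pointwise part of Ng\^o's theorem --- the precise description of the top of the restriction of $\pi_*\mathbb Q_{\bar J}$ to a putative support --- together with Poincar\'e--Verdier duality and the irreducibility of the compactified Jacobian fibres. That is where the non-singularity of $\bar J$ and the autoduality of Section~\ref{sec2} are really needed.
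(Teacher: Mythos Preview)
Your overall strategy is correct and is precisely the route of the cited references: verify that $\pi$ is a weak abelian fibration, establish the $\delta$-inequality on the base, and conclude via Ng\^o's support theorem together with irreducibility of the fibres. The paper itself does not give a proof beyond the citations; its only addition is the Remark that the crucial $\delta$-inequality (condition~(A3) in \cite{MR3259038}) is supplied by Lemma~\ref{Severi Inequality}, which uses the non-singularity of $\bar J$. Two points in your sketch deserve correction.

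First, polarisability in Ng\^o's sense is a condition on the Tate module of the group scheme $P=\operatorname{Pic}^0_{\mathcal C/B}$, and is verified classically via the Weil pairing on Jacobians; Arinkin's Poincar\'e sheaf on $\bar J\times_B\bar J$ plays no role here. That sheaf encodes autoduality of the \emph{compactified} Jacobian, which is a different (and deeper) statement not needed for full support. Your parenthetical about the theta polarisation is the right ingredient; the reference to Section~\ref{sec2} is a red herring.

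Second, and more importantly, your derivation of $\operatorname{codim}_B B_{\ge\delta}\ge\delta$ compares $\mathcal C\to B$ \'etale-locally with the semiuniversal deformation and then transports the known codimension of the $\delta$-constant stratum. But this transport requires the classifying map $B\to\operatorname{Def}$ to be \emph{smooth}, and that is exactly where the hypothesis that $\bar J$ is non-singular enters: by Fantechi--G\"ottsche--van~Straten, smoothness of the relative compactified Jacobian is equivalent to local versality of the family of curves. In your write-up the non-singularity of $\bar J$ is invoked only for Verdier self-duality in the final step, whereas it is already essential for the $\delta$-inequality itself. This is the point the paper's Remark singles out, and it is what Lemma~\ref{Severi Inequality} packages; without it your codimension argument does not go through for an arbitrary family of integral locally planar curves.
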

\begin{rmk}
    We note that the proof of the above proposition relies on a $\delta$-inequality for the base $B$, [see condition (A3) in Section $2.1$ of \cite{MR3259038}] which in our case is guaranteed by the Severi inequality, since we assumed $\bar J$ is non-singular; see Lemma \ref{Severi Inequality}.
\end{rmk}
In the next section, we study motivic perverse filtrations for a compactified Jacobian fibration. In particular, we construct an explicit family of motivic perverse filtrations.
\subsection{Motivic Perverse Filtrations for Compactified Jacobians}\label{sec1.2} We keep the notations as in Section \ref{sec0.1}.
\begin{defn}\label{def1.2}
    We say a pair of correspondences $\mathfrak Z,\mathfrak Z^{-1} \in \operatorname{Corr}_B^{\bullet}(\bar J,\bar J)$ is a good pair if they satisfy the following conditions: \begin{enumerate}
        \item $\mathfrak Z\circ \mathfrak Z^{-1}=[\Delta_{\bar J/B}]\in \operatorname{Corr}_B^0(\bar J,\bar J)$ and $\mathfrak Z^{-1}\circ \mathfrak Z=[\Delta_{\bar J/B}]\in \operatorname{Corr}_B^0(\bar J,\bar J)$
        \item $\mathfrak Z, \mathfrak Z^{-1}$ satisfy Fourier Vanishing  (FV) in the sense of \cite{MR4877374} i.e. \begin{equation*}\label{(FV)}\tag{FV}
            \mathfrak Z^{-1}_k\circ \mathfrak Z_l =0\text{ if }k+l<2g
        \end{equation*}where $\mathfrak Z_k\in A^k(\bar J\times_B \bar J)=\operatorname{Corr}^{k-g}_B(\bar J,\bar J)$ is the degree $k-g$ component of the correspondence $\mathfrak Z$ and similarly for $\mathfrak Z^{-1}$. 
        \item There exists a Zariski open set $U\subset B$ such that $U\subset B$ such that each fiber over $U$ is non-singular i.e. $\bar J_U\xrightarrow{\pi_U} U$ is a commutative group scheme over $U$ and $\mathfrak Z|_{U}= \operatorname{ch}(\mathcal L)\cap [\bar J_U\times_U \bar J_U]$ where $\mathcal L$ is the normalized Poincare line bundle on $\bar J_U \times_U \bar J_U$. 
    \end{enumerate}
\end{defn}
Note that the first conditions tell us \begin{equation}\label{eq3}
    \sum_{i=0}^{2g}\mathfrak Z_i\circ \mathfrak Z_{2g-i}^{-1}=[\Delta_{\bar J/B}]
\end{equation}and \begin{equation}
      \sum_{i=0}^{2g}\mathfrak Z^{-1}_i\circ \mathfrak Z_{2g-i}=[\Delta_{\bar J/B}]
\end{equation}because of degree reasons.
If $\mathfrak Z, \mathfrak Z^{-1}$ are as in \ref{def1.2}, we can define projectors \begin{equation}
    \mathfrak p_k:=\sum_{i\leq k}\mathfrak Z_i \circ \mathfrak Z^{-1}_{2g-i} \in \operatorname{Corr}_B^0(\bar J,\bar J)
\end{equation}
\begin{equation}
    \mathfrak q_{k+1}:= \sum_{i\geq k+1}\mathfrak Z_i\circ \mathfrak Z_{2g-i}^{-1}\in \operatorname{Corr}^0_B(\bar J,\bar J)
\end{equation}To see that $\mathfrak p_k, \mathfrak q_{k+1}$'s are indeed projectors, one observes that

\begin{equation*}
    \mathfrak p_k \circ \mathfrak p_k  = \sum_{i\leq k, j\leq k}\mathfrak Z_i \circ \mathfrak Z_{2g-i}^{-1}\circ \mathfrak Z_j\circ \mathfrak Z_{2g-j}^{-1}  = \left (\sum_{i=0}^{2g} \mathfrak Z_i \circ \mathfrak Z_{2g-i}^{-1}\right )\circ \left ( \sum_{j\leq k}\mathfrak Z_j\circ \mathfrak Z_{2g-j}^{-1}\right )  = [\Delta_{\bar J/B}]\circ \mathfrak p_k = \mathfrak p_k
\end{equation*}where the second equality follows thanks to \ref{(FV)} and the third equality follows from Equation (\ref{eq3}). For $\mathfrak q_{k+1}$, one observes that $\mathfrak q_{k+1}=[\Delta_{\bar J/B}]-\mathfrak p_k$.  It follows that \begin{equation*}
    \mathfrak p_k\circ \mathfrak q_{k+1}= \mathfrak p_k \circ ([\Delta_{\bar J/B}]-\mathfrak p_k)= \mathfrak p_k -\mathfrak p_k \circ \mathfrak p_k=0
\end{equation*}and\begin{equation*}
    \mathfrak q_{k+1}\circ \mathfrak p_k= ([\Delta_{\bar J/B}]-\mathfrak p_k)\circ \mathfrak p_k= \mathfrak p_k -\mathfrak p_k \circ \mathfrak p_k=0
\end{equation*}
\begin{prop}\label{prop1.4}
A good pair of correspondences $\mathfrak Z,\mathfrak Z^{-1}$ induces a motivic perverse filtration on $\mathfrak h(\bar J)$ by setting $P_k\mathfrak h(\bar J)=(\bar J, \mathfrak p_k, 0)$.  \end{prop}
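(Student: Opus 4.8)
The plan is to verify the three axioms in Definition \ref{def1.1} for the sequence $P_k\mathfrak h(\bar J) = (\bar J, \mathfrak p_k, 0)$ with $0 \le k \le 2R_\pi$, where $R_\pi = g$ by Proposition \ref{prop1.2} (full support forces the defect of semismallness to equal the genus $g$, since the generic fiber is the abelian variety $J$ of dimension $g$ and the family has equi-dimensional fibers). First I would check \emph{termination}: $P_{2g}\mathfrak h(\bar J) = \mathfrak h(\bar J)$ is immediate from $\mathfrak p_{2g} = \sum_{i \le 2g} \mathfrak Z_i \circ \mathfrak Z^{-1}_{2g-i} = \sum_{i=0}^{2g}\mathfrak Z_i\circ\mathfrak Z^{-1}_{2g-i} = [\Delta_{\bar J/B}]$ by Equation (\ref{eq3}). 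Next, \emph{semi-orthogonality}: since $\mathfrak p_{k+1} = \mathfrak p_k + \mathfrak Z_{k+1}\circ\mathfrak Z^{-1}_{2g-k-1}$, the same (FV)-plus-(\ref{eq3}) computation already carried out for $\mathfrak p_k\circ\mathfrak p_k = \mathfrak p_k$ gives $\mathfrak p_{k+1}\circ\mathfrak p_k = \left(\sum_{i=0}^{2g}\mathfrak Z_i\circ\mathfrak Z^{-1}_{2g-i}\right)\circ\mathfrak p_k = [\Delta_{\bar J/B}]\circ\mathfrak p_k = \mathfrak p_k$, because every cross term $\mathfrak Z_i\circ\mathfrak Z^{-1}_{2g-i}\circ\mathfrak Z_j\circ\mathfrak Z^{-1}_{2g-j}$ with $i \le k+1$, $j \le k$ either has $i + (2g-j) \ge 2g$ contributing via (\ref{eq3}), or is killed by (FV) — this is verbatim the argument displayed just before the proposition, so it needs only to be invoked.

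The substantive point is \emph{realization}: one must show that the inclusion $P_\bullet\mathfrak h(\bar J)\hookrightarrow\mathfrak h(\bar J)$ realizes, under the Corti--Hanamura functor, to the perverse truncation ${}^p\tau_{\le \bullet + \dim\bar J - g}R\pi_*\mathbb Q_{\bar J}\hookrightarrow R\pi_*\mathbb Q_{\bar J}$. The strategy is to transport the known cohomological statement through the correspondence $\mathfrak Z$: condition (3) of Definition \ref{def1.2} says that over the open locus $U$ where $\pi$ is an abelian scheme, $\mathfrak Z|_U = \mathrm{ch}(\mathcal L)\cap[\bar J_U\times_U\bar J_U]$ is (the Chow-level avatar of) the Fourier--Mukai kernel for the dual abelian scheme, and the homological realization of such a $\mathfrak Z$ is exactly the classical Fourier--Mukai transform on $R\pi_*\mathbb Q$. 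By Deligne's theorem the decomposition $R\pi_*\mathbb Q_{\bar J} \simeq \bigoplus_i {}^p\mathcal H^i[-i]$ is governed, over $U$, by the Lefschetz/weight grading of the abelian scheme, on which the relative Fourier--Mukai transform acts as a shift of degree; hence the homological realizations of $\mathfrak Z_i$ and $\mathfrak Z^{-1}_{2g-i}$ compose to the projector onto ${}^p\mathcal H^{\le i}$. Since $\pi$ has full support (Proposition \ref{prop1.2}), a perverse sheaf on $B$ with no summand supported on a proper closed subset is determined by its restriction to any dense open, so an equality of projectors that holds over $U$ holds on all of $B$; therefore $\sum_{i\le k}(\mathfrak Z_i\circ\mathfrak Z^{-1}_{2g-i})_{\mathrm{hom}}$ equals the perverse truncation projector ${}^p\tau_{\le k + \dim\bar J - g}$, which is precisely the realization claimed. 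I would also note that (FV) is what guarantees these homological operators are genuine idempotents compatible with the truncation, matching the Chow-level computation.

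The main obstacle — and the step requiring the most care — is the realization argument, specifically the claim that the homological realization of $\mathfrak Z$ agrees with the relative Fourier--Mukai functor and that this functor is ``diagonal'' with respect to the perverse filtration in the precise bookkeeping sense needed (i.e. $\mathfrak Z_i$ raises perverse degree by a controlled amount). This is essentially the content of \cite{MR4877374} and \cite{bae2024generalizedbeauvilledecompositions} for the cases $\alpha = 0$ and $\alpha = \tfrac12$, and the proof here should reduce to citing their analysis of the Fourier transform on $R\pi_*\mathbb Q$ together with the full-support spreading-out principle; the remaining content is purely formal manipulation of correspondences of the kind already exhibited in the excerpt. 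A secondary point to address cleanly is that $\mathfrak p_k$ is a genuine projector in $\operatorname{Corr}^0_B(\bar J,\bar J)$ so that $(\bar J,\mathfrak p_k,0)$ is a bona fide object of $\operatorname{CHM}(B)$ and a summand of $\mathfrak h(\bar J)$ — but this is exactly the displayed computation preceding the proposition, so it is already in hand.
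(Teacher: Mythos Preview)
Your proposal is correct and follows the same three-step structure as the paper: termination via $\mathfrak p_{2g}=[\Delta_{\bar J/B}]$, semi-orthogonality via (FV) and Equation~(\ref{eq3}), and realization by restricting to the abelian-scheme locus $U$ and then extending to all of $B$ using full support (Proposition~\ref{prop1.2}).

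The one difference worth flagging is in how the realization step over $U$ is pinned down. You propose to invoke the cohomological Fourier--Mukai analysis of \cite{MR4877374} and \cite{bae2024generalizedbeauvilledecompositions}, but those references treat specific good pairs $\mathfrak F_\alpha$, whereas the proposition concerns an \emph{arbitrary} good pair $\mathfrak Z$. The paper instead observes that condition~(3) of Definition~\ref{def1.2} forces $\mathfrak Z|_U=\operatorname{ch}(\mathcal L)\cap[\bar J_U\times_U\bar J_U]$, so the restricted projectors $\mathfrak Z_i|_U\circ\mathfrak Z^{-1}_{2g-i}|_U$ are precisely the Deninger--Murre projectors of \cite{MR1133323}; their homological realization is then the canonical decomposition $R{\pi_U}_*\mathbb Q_{\bar J_U}=\bigoplus_i R^i{\pi_U}_*\mathbb Q_{\bar J_U}[-i]$, from which the identification of $({\mathfrak p_k|}_U)_*$ with the truncation $\bigoplus_{i\le k}$ is immediate. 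This is cleaner than routing through a sheaf-theoretic Fourier transform and works uniformly for any good pair, so you should cite Deninger--Murre directly rather than the $\alpha$-specific papers.
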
   
\begin{proof} 
We set $P_k\mathfrak h(\bar J)=(\bar J,\mathfrak p_k,0 )$. Since $\mathfrak p_{2g}=[\Delta_{\bar J/B}]$, $P_{2g}\mathfrak h(\bar J)=\mathfrak h(\bar J)$ which shows termination.

For semi-orthogonality, we note that 
\begin{gather*}
\mathfrak p_{k+1}\circ \mathfrak p_k = \left (\sum_{i\leq k+1}\mathfrak Z_i\circ \mathfrak Z_{2g-i}^{-1} \right )\circ \left (\sum_{j\leq k}\mathfrak Z_j\circ \mathfrak Z_{2g-j}^{-1}  \right ) \\ = \left (\sum_{i=0}^{2g} \mathfrak Z_i \circ \mathfrak Z_{2g-i}^{-1}\right )\circ \left ( \sum_{j\leq k}\mathfrak Z_j\circ \mathfrak Z_{2g-j}^{-1}\right )  =[\Delta_{\bar J/B}]\circ \mathfrak p_k =\mathfrak p_k
\end{gather*}
where as before, the second equality follows thanks to \ref{(FV)} and the third equality follows from Equation (\ref{eq3}). 

Now we look at the realization property. Let $U\subset B$ be as in Definition \ref{def1.2}. By definition, we have $\mathfrak Z|_U=\operatorname{ch}(\mathcal L)\cap [\bar J_U\times_U \bar J_U]$ where $\mathcal L$ is the normalized Poincare line bundle on $\bar J_U \times_U \bar J_U$ for the commutative group scheme $\bar J_U\rightarrow U$. So $\bar{\mathfrak p}_k:= {\mathfrak Z_k|}_{U}\circ {\mathfrak Z_{2g-k}^{-1}|}_U$ are projectors giving rise to the motivic decomposition in \cite{MR1133323} i.e. \begin{equation}
    \mathfrak h(\bar J_U)= \bigoplus_{i=0}^{2g} \mathfrak h_i(\bar J_U)\text{ where }\mathfrak h_i(\bar J_U)=(\bar J_U, \bar{\mathfrak p}_i,0)
\end{equation}The homological realization of the above decomposition yields a canonical decomposition into shifted local systems as shown in \cite{MR1133323}, namely \begin{equation}
            {\pi_U}_*\mathbb Q_{\bar J_U}= \bigoplus_{i=0}^{2g}R^i{\pi_U}_*\mathbb Q_{\bar J_U}[-i]
        \end{equation}As such, for the realization of $P_k\mathfrak h(\bar J)|_U=(\bar J_U, {\mathfrak p_k|}_U,0)$, we get\begin{equation}
            {({\mathfrak p_k|}_U)}_*({\pi_U}_* \mathbb Q_{\bar J_U})= \bigoplus_{i=0}^k R^i{\pi_U}_*\mathbb Q_{\bar J_U}[-i]
        \end{equation}with ${({\mathfrak p_k|}_U)}_*({\pi_U}_* \mathbb Q_{\bar J_U})\rightarrow {\pi_U}_*\mathbb Q_{\bar J_U}$ given by the natural inclusion. Consequently, for the homological realization of $P_k \mathfrak h(\bar J)\rightarrow \mathfrak h(\bar J)$ i.e. ${\mathfrak p_k}_* (\pi_* \mathbb Q_{\bar J})\rightarrow \pi_* \mathbb Q_{\bar J} $, we know that after restriction to $U$, the following vanishing of the perverse cohomology \begin{equation}\label{eq10}
           {^p\mathcal H^{i+\dim B}({\mathfrak p_k}_* (\pi_* \mathbb Q_{\bar J}))|}_U=0 \text{ if }i>k
        \end{equation}and the restriction to $U$ of the induced homomorphism \begin{equation}\label{eq11}
            ^p\mathcal H^{i+\dim B}(\bullet):{^p\mathcal H^{i+\dim B}}({\mathfrak p_k}_* (\pi_* \mathbb Q_{\bar J}))\rightarrow {^p\mathcal H^{i+\dim B}(\pi_* \mathbb Q_{\bar J})} 
        \end{equation}is an isomorphism if $i\leq k$. We know that $\pi: \bar J\rightarrow B$ has full support by Proposition \ref{prop1.2}. So, it follows that Equation (\ref{eq10}) and Equation (\ref{eq11}) must stay true over the entire base $B$; otherwise the perverse sheaf ${}^p\mathcal H^{i+\dim B}(\pi_* \mathbb Q_{\bar J})$ would have a non-trivial sub-object supported away from $U$.  
        Therefore, we get \begin{equation}{\mathfrak p_k}_* (\pi_* \mathbb Q_{\bar J})\simeq {^p\tau_{\leq k+\dim B}}\pi_* \mathbb Q_{\bar J}
        \end{equation} with ${\mathfrak p_k}_* (\pi_* \mathbb Q_{\bar J})\rightarrow \pi_* \mathbb Q_{\bar J} $ given by the natural morphism 
        $\pi_*\mathbb Q_{\bar J}\rightarrow  {^p\tau_{\leq k+1+\dim B}}\pi_*\mathbb Q_{\bar J}$. This completes the proof of the proposition.
\end{proof}
Note that along the same lines, one can show that if we set $Q_{k+1}\mathfrak h(\bar J):=(\bar J, \mathfrak q_{k+1},0)$ the homological realization of $\mathfrak h(\bar J)\rightarrow Q_{k+1}\mathfrak h(\bar J)$ is the natural homomorphism $\pi_* \mathbb Q_{\bar J}\rightarrow {}^p\tau_{\geq k+1+\dim B}\pi_* \mathbb Q_{\bar J}$. 

So we get a decomposition of Chow motives \begin{equation}
    \mathfrak h(\bar J)= P_k\mathfrak h(\bar J) \oplus Q_{k+1}\mathfrak h(\bar J)
\end{equation}whose homological realization gives the appropriate morphisms.
\section{Arinkin's Sheaf and Co-dimension Estimates}\label{sec2}
We keep the notations as before. In this section, we review Arinkin's theory of the Poincare sheaf on a compactified Jacobian and prove some co-dimension estimates. The crucial technical results are Proposition \ref{prop2.5}, Proposition  \ref{prop2.6} and Proposition  \ref{prop2.7}.
We begin with introducing a stratification of $B$. 
\subsection{Stratification of the Base via Geometric Genus of the Fibers} For a curve $C_b$ over a closed point $b\in B$, we look at its normalization $\tilde C_b$ and $\tilde g_b$ is the genus of $\tilde C_b$, i.e. the geometric genus of $C_b$. Let $B^{(\tilde g)}$ be the locally closed sub-variety that consists of points $b\in B$ such that $C_b$ has geometric genus $\tilde g$ ie $\tilde g_b =\tilde g$.

It follows from \cite{MR961600} that the function $\Phi: B\rightarrow \mathbb Z$ given by $b\mapsto \tilde g_b$ is lower semi-continuous and hence $\left \{B^{(\tilde g)}\right \}_{\tilde g}$ gives a stratification of $B$. We end this sub-section with the 'Severi inequality' which shall be used repeatedly subsequently. For $b\in B$, set $\delta(b)=g-\tilde g_b$ and for $Z\subset B$, an irreducible closed sub-variety, set $\delta_Z:= \delta(b)$ for a general point of $b$. 
\begin{lem}[Severi Inequality]\label{Severi Inequality}
    If $Z\subset \bar J$ is a closed sub-variety, $\operatorname{codim}_B Z\geq \delta_Z$
\end{lem}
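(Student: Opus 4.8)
The plan is to reduce the statement to a comparison of fiber dimensions for the universal compactified Jacobian after restricting to the stratum where the geometric genus is (generically) constant. Let $Z \subset B$ be an irreducible closed subvariety with generic geometric genus $\tilde g_Z = g - \delta_Z$, so that a general point $b \in Z$ lies in the stratum $B^{(\tilde g_Z)}$. I would first recall the local structure of the compactified Jacobian: for a fiber $C_b$ with planar singularities and normalization $\widetilde C_b \to C_b$, the generalized Jacobian $\operatorname{Pic}^0(C_b)$ is an extension of the abelian variety $\operatorname{Jac}(\widetilde C_b)$ (of dimension $\tilde g_b$) by an affine group of dimension $\delta(b) = g - \tilde g_b$, and the compactified Jacobian $\bar J_{C_b}$ is a projective, equidimensional variety of dimension $g$. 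The key geometric input is that the ``affine direction'' of the singularity forces the compactified Jacobian over $Z$ to have fibers that are too large unless $Z$ has large codimension in $B$.

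Concretely, I would argue as follows. Because $\bar J = \bar J_{\mathcal C}$ is assumed non-singular and $\pi\colon \bar J \to B$ has equidimensional fibers of dimension $g$, the preimage $\pi^{-1}(Z)$ has dimension $\dim Z + g$. On the other hand, I claim $\pi^{-1}(Z)$ maps to a variety of dimension at most $\dim B - \delta_Z + g - \delta_Z$, or more precisely that one can bound $\dim \pi^{-1}(Z)$ from above using the fact that a point of $\bar J$ over $b$ is a rank-one torsion-free sheaf on $C_b$, and such sheaves deform together with $C_b$ inside a fixed family; the ``extra'' parameters coming from the $\delta(b)$-dimensional singularity locus are already accounted for in the $g$-dimensional fiber. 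The cleanest route is via the Severi variety / equigeneric locus estimate of Diaz–Harris type (which is precisely the content of condition (A3) in \cite{MR3259038} and underlies \cite{MR961600}): the locus in the versal deformation space of a planar curve singularity along which $\delta$ stays $\geq \delta_0$ has codimension $\geq \delta_0$. Pulling this back along the classifying map $B \to (\text{versal deformation})$ for the family $\mathcal C \to B$ — which exists since the singularities are planar — gives $\operatorname{codim}_B B^{(\le \tilde g_Z)} \geq \delta_Z$, and since $Z \subset \overline{B^{(\tilde g_Z)}} \subset B^{(\le \tilde g_Z)}$ this yields $\operatorname{codim}_B Z \geq \delta_Z$.

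The main obstacle, and the step deserving the most care, is justifying the codimension-$\delta_0$ bound in the versal deformation space and checking that the hypothesis that $\bar J$ is \emph{non-singular} is genuinely what makes it applicable here (as the remark following Proposition \ref{prop1.2} indicates, this is exactly where smoothness of $\bar J$ enters). I would either cite the Severi-type inequality directly from the literature on equisingular/equigeneric deformations of planar curves (Tessier, Diaz–Harris, or the formulation in \cite{MR3259038}, \cite{MR961600}) or, if a self-contained argument is wanted, deduce it from smoothness of $\bar J$ together with a dimension count for the relative compactified Jacobian over the equigeneric stratum: smoothness of the total space of $\bar J$ forces the family $\mathcal C \to B$ to be ``as versal as possible'' transversally to each equigeneric stratum, which is precisely the $\delta$-inequality. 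A short statement quantifying ``$\operatorname{codim}_B Z \ge \delta_Z$ because otherwise $\bar J$ acquires singularities along $\pi^{-1}(Z)$'' would complete the proof; everything else is bookkeeping with dimensions.
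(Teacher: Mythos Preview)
The paper's own proof is simply a one-line citation to \cite{MR4602420} Lemma~4.1, noting that smoothness of $\bar J$ is the crucial hypothesis; your proposal correctly unpacks exactly the argument behind that citation. The route you settle on---the Diaz--Harris/Teissier bound that the $\delta$-constant locus in the miniversal deformation of a planar curve singularity has codimension $\geq \delta$, combined with the fact that smoothness of $\bar J$ forces the classifying map from $B$ to be a submersion (so codimensions pull back)---is precisely the content of the cited lemma and of condition (A3) in \cite{MR3259038}. Your first attempted dimension count for $\pi^{-1}(Z)$ in the second paragraph does not lead anywhere and you rightly abandon it; the versal-deformation argument you switch to is the correct and standard one.
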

\begin{proof}
    This is essentially \cite{MR4602420} Lemma 4.1, which uses the fact that $\bar J$ is non-singular. 
\end{proof}
\begin{cor}\label{cor2.2}
    Every irreducible component $Z$ of $B^{(\tilde g)}$ satisfies $\dim Z\leq \dim B-g+\tilde g$
\end{cor}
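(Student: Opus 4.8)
The plan is to deduce the corollary directly from the Severi inequality (Lemma \ref{Severi Inequality}) after replacing the locally closed stratum by its closure. Let $Z$ be an irreducible component of $B^{(\tilde g)}$ and let $\bar Z\subset B$ be its Zariski closure. Since the function $\Phi\colon b\mapsto \tilde g_b$ is lower semi-continuous, $B^{(\tilde g)}=\Phi^{-1}(\tilde g)$ is locally closed, so $Z$ is open and dense in $\bar Z$; in particular $\dim Z=\dim\bar Z$ and a general point of $\bar Z$ lies in $B^{(\tilde g)}$.

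First I would compute $\delta_{\bar Z}$. By definition $\delta_{\bar Z}=\delta(b)=g-\tilde g_b$ for a general $b\in\bar Z$; since such a $b$ lies in $Z\subset B^{(\tilde g)}$, one has $\tilde g_b=\tilde g$, hence $\delta_{\bar Z}=g-\tilde g$.

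Next I would apply Lemma \ref{Severi Inequality} to the closed subvariety $\bar Z$. (If one prefers to apply the inequality to a closed subvariety of $\bar J$ rather than of $B$, one applies it to an irreducible component of $\pi^{-1}(\bar Z)$ instead: this has the same image $\bar Z$ in $B$, and since $\pi$ has $g$-dimensional fibers over every point, it has the same codimension over the base.) This yields $\operatorname{codim}_B\bar Z\geq \delta_{\bar Z}=g-\tilde g$, i.e. $\dim\bar Z\leq \dim B-g+\tilde g$, and therefore $\dim Z=\dim\bar Z\leq \dim B-g+\tilde g$, which is the claim.

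The substance of the statement is carried entirely by Lemma \ref{Severi Inequality} — equivalently, by the non-singularity of $\bar J$ via \cite{MR4602420} — so I do not expect a genuine obstacle here. The only points that require care are the passage from the locally closed stratum to its closure, so that the Severi inequality (stated for closed subvarieties) applies, together with the observation that the geometric genus, being read off at a general point, is unchanged under this passage; both are routine bookkeeping.
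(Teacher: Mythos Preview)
Your proof is correct and follows essentially the same approach as the paper: pass to the closure $\bar Z$, observe that a general point of $\bar Z$ still lies in $B^{(\tilde g)}$ so that $\delta_{\bar Z}=g-\tilde g$, and apply Lemma~\ref{Severi Inequality}. Your write-up is more detailed (in particular your parenthetical remark handling the fact that the lemma is stated for subvarieties of $\bar J$), but the idea is identical.
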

\begin{proof}
    If $\bar Z$ is the closure of $Z$, then for a general point of $b\in \bar Z$, $\delta(b)=g-\tilde g$ whence the inequality follows. 
\end{proof}
\subsection{Co-Dimension Bounds}\label{sec2.2} In \cite{MR2915476}, Arinkin constructed the normalized Poincare line bundle $\mathcal P$ over the product $J \times_B \bar J$ extending the standard normalized Poincare line bundle for Jacobians associated with non-singular curves. In \cite{MR3019453}, it was shown that the push-forward of this sheaf via the open inclusion $J\times_B \bar J\subset \bar J\times_B \bar J$ gives a Cohen-Macaulay coherent sheaf on $\bar J\times_B \bar J$ which we still denote by $\mathcal P$ for notational convenience. Furthermore it was shown that we had an auto-equivalence induced by the Fourier Mukai transform
 
\begin{gather*}
     FM_{\mathcal P}: D^b(\bar J)\xrightarrow{\simeq} D^b(\bar J) \\ \mathcal E\mapsto {p_2}_* (p_1^*\mathcal E\otimes \mathcal P) 
\end{gather*}where $p_1,p_2: \bar J\times_B \bar J\rightarrow \bar J$ are the two projections. The inverse of this auto-equivalence is given by $FM_{\mathcal P^{-1}}$, the Fourier-Mukai transform with kernel $$\mathcal P^{-1}=\mathcal P^\vee \otimes \pi_2^*\omega_{\pi}[g], \text{ where }\mathcal P^\vee:=\mathscr{H}om(\mathcal P, \mathcal O_{\bar J\times_B \bar J}).$$Here $\pi_2: \bar J\times_B \bar J\rightarrow B$ is the natural morphism and $\omega_{\pi}$ is the relative canonical bundle with respect to $\pi: \bar J\rightarrow B$.
We list the following properties of $\mathcal P$. 

First of all, we have an action of $J$ on $\bar J$, given by $\mu: J\times_B \bar J\rightarrow \bar J$. More concretely, let $b\in B$ be a closed point. Given $L_b\in \operatorname{Pic}^0(C_b)=J_b$ and $F_b\in \bar J_b$, a rank $1$ torsion-free degree $0$ sheaf on $C_b$, the action of $J$ on $\bar J$ is defined by the formula $\mu(L_b,F_b)=L_b\otimes_{\mathcal O_{C_b}} F_b\in \bar J_b$. We also have the Abel-Jacobi map  \begin{gather*}
    \mathrm{AJ}: J\times_B \mathcal C\rightarrow \bar J \\ (L,c)\in J_b\times C_b \mapsto \mathscr Hom(\mathcal I_c, L(-p))\in \bar J_b
\end{gather*}
where $\mathcal I_c$ is the ideal sheaf corresponding to the closed point $c$ and $p\in C_b$ is the smooth point given by the section of $\mathcal C\rightarrow B$. 

We recall the following properties by Arinkin.
\begin{lem}\label{lem2.3}[\cite{MR3019453} Lemma $6.4$]
   Let $ \mathcal F_b$ be a universal sheaf on $C_b\times \bar J_b$. Consider the diagram \begin{equation*}
    \begin{tikzcd}
        J_b\times \bar J_b&J_b\times C_b\times\bar J_b\arrow{l}[swap]{p_{13}}\arrow{r}{p_{23}}\arrow{d}{\mathrm{AJ}\times id} &  C_b\times J_b \\ &  \bar J_b\times \bar J_b
    \end{tikzcd}
\end{equation*}where $p_{13}, p_{23}$ are the canonical projections. Then we have $(\mathrm{AJ}\times id)^* \mathcal P|_{\bar J_b\times \bar J_b}=p_{23}^* \mathcal F_b\otimes p_{13}^* (\mathcal P|_{J_b\times \bar J_b})$.
\end{lem}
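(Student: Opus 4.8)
The plan is to reduce the identity to two facts about Arinkin's sheaf: its multiplicativity under the action of $J$ on $\bar J$, and the fact that the Abel--Jacobi morphism is the \emph{normalising} section for $\mathcal P$, in the sense that pulling $\mathcal P$ back along it recovers the universal sheaf. Write $\mathrm{aj}\colon\mathcal C\to\bar J$ for the Abel--Jacobi morphism with base point the given section, $\mathrm{aj}(c)=\mathscr Hom(\mathcal I_c,\mathcal O_{C_b}(-p))$ for $c\in C_b$, so that over each $b\in B$ the full Abel--Jacobi map factors as
\[
\mathrm{AJ}\colon\ J_b\times C_b\ \xrightarrow{\ \mathrm{id}_{J_b}\times\mathrm{aj}_b\ }\ J_b\times\bar J_b\ \xrightarrow{\ \mu\ }\ \bar J_b,\qquad \mathrm{AJ}(L,c)=L\otimes\mathrm{aj}(c).
\]
Consequently, on $J_b\times C_b\times\bar J_b$ one has $\mathrm{AJ}\times\mathrm{id}_{\bar J_b}=\mu_1\circ(\mathrm{id}_{J_b}\times\mathrm{aj}_b\times\mathrm{id}_{\bar J_b})$, where $\mu_1\colon J_b\times\bar J_b\times\bar J_b\to\bar J_b\times\bar J_b$ acts through $\mu$ on the first two factors and trivially on the third.

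The first step is to invoke the ``theorem of the square'' for $\mathcal P$: over $J_b\times\bar J_b\times\bar J_b$ there is a canonical isomorphism
\[
\mu_1^{*}\bigl(\mathcal P|_{\bar J_b\times\bar J_b}\bigr)\ \cong\ q_{13}^{*}\bigl(\mathcal P|_{J_b\times\bar J_b}\bigr)\ \otimes\ q_{23}^{*}\bigl(\mathcal P|_{\bar J_b\times\bar J_b}\bigr),
\]
where $q_{13},q_{23}$ are the projections onto the $(J_b,\bar J_b)$ and $(\bar J_b,\bar J_b)$ factors; this bimultiplicativity is established in \cite{MR3019453} and is exactly what makes $FM_{\mathcal P}$ intertwine translation by $L\in J$ with twisting by $\mathcal P|_{\{L\}\times\bar J}$. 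Pulling this back along $\mathrm{id}_{J_b}\times\mathrm{aj}_b\times\mathrm{id}_{\bar J_b}$ and chasing projections ($q_{13}\circ(\mathrm{id}\times\mathrm{aj}_b\times\mathrm{id})=p_{13}$ and $q_{23}\circ(\mathrm{id}\times\mathrm{aj}_b\times\mathrm{id})=(\mathrm{aj}_b\times\mathrm{id}_{\bar J_b})\circ p_{23}$) gives
\[
(\mathrm{AJ}\times\mathrm{id})^{*}\mathcal P|_{\bar J_b\times\bar J_b}\ \cong\ p_{13}^{*}\bigl(\mathcal P|_{J_b\times\bar J_b}\bigr)\ \otimes\ p_{23}^{*}\bigl((\mathrm{aj}_b\times\mathrm{id}_{\bar J_b})^{*}\mathcal P|_{\bar J_b\times\bar J_b}\bigr).
\]
Thus the lemma is equivalent to the single normalisation identity $(\mathrm{aj}_b\times\mathrm{id}_{\bar J_b})^{*}\mathcal P|_{\bar J_b\times\bar J_b}\cong\mathcal F_b$ on $C_b\times\bar J_b$.

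To prove this last identity I would first verify it over the open locus $U\subset B$ of smooth fibres: there $\bar J_U=J_U$ is an abelian scheme, $\mathcal P|_{\bar J_U\times_U\bar J_U}$ is the classical normalised Poincaré line bundle, and the identity is precisely the classical autoduality of Jacobians, namely that the pullback of the Poincaré bundle along $\mathrm{aj}\times\mathrm{id}$ is the universal bundle on $\mathcal C_U\times_U J_U$. To extend over all of $B$, note that both $(\mathrm{aj}_b\times\mathrm{id})^{*}\mathcal P|_{\bar J_b\times\bar J_b}$ and $\mathcal F_b$ are Cohen--Macaulay sheaves on the Cohen--Macaulay variety $C_b\times\bar J_b$ (the former using Cohen--Macaulayness of $\mathcal P$ together with the codimension estimates of Section \ref{sec2.2}; the latter because it is $\bar J_b$--flat with torsion--free fibres), that they are torsion--free of generic rank one along $C_b$, and that the locus over which $\mathrm{AJ}$ fails to behave like the classical Abel--Jacobi map has high codimension by the Severi inequality (Lemma \ref{Severi Inequality}); an $S_2$/Hartogs extension argument then promotes the isomorphism from the generic locus to all of $C_b\times\bar J_b$, up to a twist by a line bundle pulled back from $\bar J_b$. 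Finally, restricting both sides to $\{p\}\times\bar J_b$ and using that $\mathcal P$ is trivialised there by its normalisation while $\mathcal F_b$ is normalised using the section $p$ forces the twist to be trivial.

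The main obstacle is genuinely the normalisation identity, i.e. that $\mathrm{aj}_b^{*}\bigl(\mathcal P|_{\bar J_b\times\{F\}}\bigr)\cong F$ for an arbitrary, possibly non--locally--free $F\in\bar J_b$: this autoduality statement is the technical heart of Arinkin's construction of the Poincaré sheaf on a compactified Jacobian, and is exactly what forces one to work with a Cohen--Macaulay sheaf rather than a line bundle. The two further technical points --- that the underived pullbacks along the non--flat map $\mathrm{AJ}\times\mathrm{id}$ used above compute the right object, and that the three normalisations in play (of $\mathcal P$ on $\bar J\times_B\bar J$, of $\mathcal P$ on $J\times_B\bar J$, and of $\mathcal F_b$) are mutually consistent --- are where Cohen--Macaulayness of $\mathcal P$ and the codimension estimates of Section \ref{sec2.2} do real work, but they are bookkeeping rather than a conceptual difficulty; the theorem--of--the--square input and the $S_2$ extension are formal once the basic properties of $\mathcal P$ are granted.
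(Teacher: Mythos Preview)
The paper does not give its own proof of this lemma: it is simply cited as \cite{MR3019453} Lemma~6.4, with no argument supplied. So there is nothing in the paper to compare your proposal against.

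That said, your strategy is the natural one and is essentially how the statement is organised in Arinkin's paper: factor $\mathrm{AJ}$ as $\mu\circ(\mathrm{id}\times\mathrm{aj})$, invoke the theorem of the square (Lemma~\ref{lem2.4}) to handle the $\mu$ part, and reduce to the normalisation identity $(\mathrm{aj}\times\mathrm{id})^*\mathcal P\cong\mathcal F_b$. Two cautions are worth flagging. First, be careful about logical order: in the present paper Lemmas~\ref{lem2.3} and~\ref{lem2.4} are both quoted from Arinkin as independent inputs, so using one to derive the other is fine here, but if you were reconstructing Arinkin's argument from scratch you would need to check that his proof of the theorem of the square does not itself rely on Lemma~6.4. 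Second, your $S_2$/Hartogs extension sketch for the normalisation identity is more delicate than you suggest: the map $\mathrm{aj}\times\mathrm{id}$ is a closed embedding of codimension $g-1$, and the naive pullback of a Cohen--Macaulay sheaf along such a map need not remain Cohen--Macaulay, so the ``bookkeeping'' you allude to about underived vs.\ derived pullbacks and about where $\mathcal P$ fails to be locally free is exactly where the content lies. In Arinkin's treatment the normalisation identity is essentially built into the construction of $\mathcal P$ rather than deduced after the fact by an extension argument.
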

\begin{lem}\label{lem2.4}[Theorem of the Square, \cite{MR3019453} Lemma $6.5$] Consider the diagram $$\begin{tikzcd}
    J_b\times \bar J_b & J_b \times \bar J_b\times \bar J_b \arrow{l}[swap]{p_{13}}\arrow{r}{p_{23}}\arrow{d}{\mu \times id_{\bar J_b}} & \bar J_b\times\bar J_b\\ & \bar J_b\times \bar J_b
\end{tikzcd}$$where $p_{13},p_{23}$ are the canonical projections. We have $(\mu\times id_{\bar J_b})^* (\mathcal P|_{\bar J_b\times \bar J_b})= p_{13}^* \mathcal P|_{J_b\times \bar J_b} \otimes p^*_{23}\mathcal P|_{\bar J_b\times \bar J_b}$ and in particular for $(L_b, F_b)\in J_b\times \bar J_b$ over $b\in B$, we have $\mathcal P_{L_b \otimes F_b}=\mathcal P_{L_b}\otimes \mathcal P_{F_b}$ where for $G\in \bar J_b $, a closed point $\mathcal P_G =\mathcal P|_{\{ G\}\times \bar J_b}$. 
\end{lem}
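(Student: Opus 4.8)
The goal is to establish an isomorphism of two sheaves on $X:=J_b\times\bar J_b\times\bar J_b$, and the plan is to show that both are maximal Cohen--Macaulay, to identify them on a large open set where everything in sight is a line bundle, and to extend the identification across a locus of codimension $\geq 2$. Write $\mathcal P^{\circ}:=\mathcal P|_{J\times_B\bar J}$ for the honest (normalized) Poincar\'e line bundle, so that by Arinkin's construction $\mathcal P$ is a Cohen--Macaulay sheaf on $\bar J\times_B\bar J$ with $\mathcal P|_{J\times_B\bar J}=\mathcal P^{\circ}$ locally free. Since each $C_b$ has at worst planar singularities, $\bar J_b$ is an integral Cohen--Macaulay scheme (with Gorenstein, in fact l.c.i., singularities), hence $X$ is integral and Cohen--Macaulay. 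The sheaf $p_{13}^*\mathcal P^{\circ}\otimes p_{23}^*\mathcal P$ is maximal Cohen--Macaulay: $p_{13}^*\mathcal P^{\circ}$ is a line bundle, $p_{23}$ is flat with smooth fibres $\cong J_b$ so $p_{23}^*\mathcal P$ is again Cohen--Macaulay, and tensoring by a line bundle preserves the property. For the other sheaf, note that $(L,F)\mapsto(L,L\otimes F)$ is an automorphism of $J\times_B\bar J$ carrying the second projection to $\mu$, so $\mu$ (hence $\mu\times\mathrm{id}$) is flat with smooth fibres $\cong J_b$, and therefore $(\mu\times\mathrm{id})^*\mathcal P$ is maximal Cohen--Macaulay on $X$.

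Next I would locate the locus where these sheaves fail to be line bundles. By the autoduality of the Poincar\'e sheaf (Arinkin), $\sigma^*\mathcal P\cong\mathcal P$, where $\sigma$ swaps the two $\bar J$--factors; together with $\mathcal P|_{J\times_B\bar J}=\mathcal P^{\circ}$ this shows that $\mathcal P$ is locally free on $(J\times_B\bar J)\cup(\bar J\times_B J)$, so its non--locally--free locus is contained in $(\bar J_b\smallsetminus J_b)\times(\bar J_b\smallsetminus J_b)$, which has codimension $\geq 2$ in $\bar J_b\times\bar J_b$ since $\bar J_b\smallsetminus J_b$ is a proper closed subset of the integral $\bar J_b$. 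Because tensoring by a line bundle preserves local freeness, the preimage of this locus under $\mu\times\mathrm{id}$ is $Z:=J_b\times(\bar J_b\smallsetminus J_b)\times(\bar J_b\smallsetminus J_b)$, and the preimage under $p_{23}$ is the same set $Z$; thus both sheaves restrict to line bundles on $U:=X\smallsetminus Z$, and $\operatorname{codim}_X Z\geq 2$.

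On $U$ I would build the isomorphism by covering $U$ with $U_1=\{F\in J_b\}=J_b\times J_b\times\bar J_b$ and $U_2=\{G\in J_b\}=J_b\times\bar J_b\times J_b$. On $U_1$ the claimed identity reads $(m\times\mathrm{id})^*\mathcal P^{\circ}\cong p_{13}^*\mathcal P^{\circ}\otimes p_{23}^*\mathcal P^{\circ}$, where $m\colon J_b\times J_b\to J_b$ is the group law; fibrewise over the third factor this is the triviality of the Mumford bundle $m^*\mathcal P^{\circ}_G\otimes p_1^*(\mathcal P^{\circ}_G)^{-1}\otimes p_2^*(\mathcal P^{\circ}_G)^{-1}$ on $J_b\times J_b$, i.e. the theorem of the square for the group scheme $J\to B$, valid because $\mathcal P^{\circ}_G:=\mathcal P^{\circ}|_{J_b\times\{G\}}$ lies in $\mathrm{Pic}^0(J_b)$ (true on the dense open $J_b\subset\bar J_b$, hence everywhere). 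Carried out in families over $\bar J_b$ and over $B$ this yields the isomorphism up to a line bundle pulled back from the base, which is killed by rigidifying $\mathcal P^{\circ}$ along the given section of $\mathcal C\to B$; on $U_2$ one argues analogously (the theorem of the square in the $\bar J$--variable of $\mathcal P^{\circ}$, obtained after applying $\sigma^*\mathcal P\cong\mathcal P$), and alternatively both patches can be extracted from Lemma \ref{lem2.3} combined with the theorem of the square for the universal sheaf on $\mathcal C$. On $U_1\cap U_2$ the two isomorphisms coincide, being the canonical rigidified ones, so they glue to a single isomorphism $\varphi_U$ over $U$.

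Finally, since $(\mu\times\mathrm{id})^*\mathcal P$ and $p_{13}^*\mathcal P^{\circ}\otimes p_{23}^*\mathcal P$ are maximal Cohen--Macaulay on the Cohen--Macaulay scheme $X$ and $\operatorname{codim}_X Z\geq 2$, the sheaf $\mathscr{H}om$ from one to the other is $S_2$, so $\varphi_U$ extends uniquely to a map on $X$; the extension is an isomorphism because $\varphi_U^{-1}$ extends in the same way to a two-sided inverse. Restricting the resulting isomorphism to the slice $\{L\}\times\{F\}\times\bar J_b\subset X$, and using that $\mathcal P|_{\{L\}\times\bar J_b}=\mathcal P^{\circ}|_{\{L\}\times\bar J_b}$ is a line bundle for $L\in J_b$, gives the ``in particular'' statement $\mathcal P_{L\otimes F}=\mathcal P_L\otimes\mathcal P_F$. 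The main obstacle is the third paragraph: setting up the theorem of the square for the non-proper group scheme $J\to B$ acting on $\bar J$ so as to produce an honest isomorphism of sheaves --- choosing the rigidification consistently and checking that the $U_1$-- and $U_2$--descriptions agree on the overlap --- while the supporting point is the Cohen--Macaulay bookkeeping of the first two paragraphs, which is precisely what legitimizes the codimension--two extension and where the planarity hypothesis on the fibres of $\mathcal C\to B$ is used.
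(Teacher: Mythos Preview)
The paper does not supply its own proof of this lemma: it is quoted verbatim from Arinkin's paper \cite{MR3019453} (Lemma~6.5 there) and used as a black box in the subsequent arguments. Your proposal is essentially Arinkin's own argument---reduce to a line-bundle identity on an open set whose complement has codimension $\geq 2$, and then extend using that both sides are maximal Cohen--Macaulay---and you have correctly located the one genuinely delicate step, namely establishing the multiplicativity $(m\times\mathrm{id})^*\mathcal P^{\circ}\cong p_{13}^*\mathcal P^{\circ}\otimes p_{23}^*\mathcal P^{\circ}$ over the non-proper group $J_b$. In Arinkin's treatment this is not obtained by invoking an abstract theorem of the square for $J$; rather, $\mathcal P^{\circ}$ is built explicitly as a determinant-of-cohomology line bundle from the universal sheaf on $\mathcal C\times_B\bar J$, and the multiplicativity on $J\times_B\bar J$ follows directly from that construction (this is also how your appeal to Lemma~\ref{lem2.3} would be made precise). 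With that input in hand, your Cohen--Macaulay bookkeeping and the codimension-two extension go through exactly as you describe.
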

We shall use these lemmas to prove the following co-dimension bounds. First, we introduce some notations. Let $\bar J^{n+1}$ denote the $n+1$-the relative product. It carries natural projections $$\pi_{n+1}:\bar J^{n+1}\rightarrow B, \ p_i: \bar J^{n+1}\rightarrow \bar J, \ p_{ij}:\bar J^{n+1}\rightarrow \bar J^2$$where $p_i$ is the projection onto the $i$-th factor, $p_{ij}$ is the projection onto the $i$-th and the $j$-th factor, and so on. 

\begin{prop}\label{prop2.5}
    Let $\mathcal K\in D^b(\bar J)$ underlie a $J$-equivariant bounded complex on $\bar J$. Let $\mathcal L_1, \mathcal L_2$ be $J$-equivariant vector bundles on $\bar J$. Then we have the co-dimension bounds\begin{gather}
        \label{eq14} \operatorname{codim}_{\bar J}\operatorname{Supp}FM_{\mathcal P\otimes p_1^*\mathcal L_1\otimes p_2^*\mathcal L_2}(\mathcal K)\geq g, \\  \operatorname{codim}_{\bar J}\operatorname{Supp}FM_{\mathcal P^{\vee}\otimes p_1^*\mathcal L_1\otimes p_2^*\mathcal L_2}(\mathcal K)\geq g.
    \end{gather} 
\end{prop}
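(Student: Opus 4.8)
The plan is to reduce the two bounds to a single geometric computation on a fibered product and then use the Severi inequality (Lemma \ref{Severi Inequality}) together with Arinkin's theorem of the square (Lemma \ref{lem2.4}). The key observation is that the support of $FM_{\mathcal P\otimes p_1^*\mathcal L_1\otimes p_2^*\mathcal L_2}(\mathcal K)$ does not depend on the auxiliary bundles $\mathcal L_1,\mathcal L_2$: twisting the kernel by $p_2^*\mathcal L_2$ twists the output by $\mathcal L_2$ (which does not change support since $\mathcal L_2$ is a vector bundle), and twisting by $p_1^*\mathcal L_1$ replaces $\mathcal K$ by $\mathcal K\otimes\mathcal L_1$, which is again a $J$-equivariant complex with the same support. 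So it suffices to bound $\operatorname{codim}_{\bar J}\operatorname{Supp}FM_{\mathcal P}(\mathcal K)$ and $\operatorname{codim}_{\bar J}\operatorname{Supp}FM_{\mathcal P^\vee}(\mathcal K)$ for an arbitrary $J$-equivariant $\mathcal K$; and since $\mathcal P^{-1}$ differs from $\mathcal P^\vee$ only by the line bundle $\pi_2^*\omega_\pi[g]$ and a shift, the $\mathcal P^\vee$ statement is equivalent to the $\mathcal P^{-1}$ statement, so in the end one really only needs the bound for $FM_{\mathcal P}$ and for $FM_{\mathcal P^{-1}}$.

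Next I would exploit $J$-equivariance to show that $\operatorname{Supp}FM_{\mathcal P}(\mathcal K)$ is a union of fibers of $\pi$ over a closed subset $W\subseteq B$, i.e. that $\operatorname{Supp}FM_{\mathcal P}(\mathcal K)=\pi^{-1}(W)$. Indeed, by the theorem of the square (Lemma \ref{lem2.4}), translation by $L_b\in J_b$ on $\bar J_b$ carries $\mathcal P|_{\bar J_b\times\bar J_b}$ to $\mathcal P|_{\bar J_b\times\bar J_b}\otimes(\text{pullback of }\mathcal P_{L_b})$, so $FM_{\mathcal P}$ intertwines the $J$-action on the source with tensoring by the (fiberwise) line bundle $\mathcal P_{L_b}$ on the target; since $\mathcal K$ is $J$-equivariant, $\operatorname{Supp}FM_{\mathcal P}(\mathcal K)\cap\bar J_b$ is invariant under translation by all of $J_b$, and as $J_b$ acts on $\bar J_b$ with a dense orbit (it acts simply transitively on the smooth locus and every component of $\bar J_b$ meets the closure of that orbit), invariance forces this fiber-slice to be either empty or all of $\bar J_b$. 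Hence $\operatorname{Supp}FM_{\mathcal P}(\mathcal K)=\pi^{-1}(W)$ for some closed $W\subseteq B$, and $\operatorname{codim}_{\bar J}\pi^{-1}(W)=\operatorname{codim}_B W$.

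It then remains to show $\operatorname{codim}_B W\geq g$. The point is that over the open locus $U\subseteq B$ of smooth curves the classical Fourier-Mukai transform on abelian schemes is exact and sends nonzero $J$-equivariant (hence translation-invariant) complexes to complexes supported on the zero section, so generically on each stratum of $B$ one expects $FM_{\mathcal P}(\mathcal K)$ to either vanish or be supported everywhere; more precisely I would argue stratum by stratum using the genus stratification $\{B^{(\tilde g)}\}$ of Section 2.1. Over a point $b$ with geometric genus $\tilde g_b$, the fiber $\bar J_b$ is a $\delta(b)=g-\tilde g_b$ dimensional degeneration of an abelian variety, and the relevant part of $FM_{\mathcal P}(\mathcal K)$ at $b$ is governed by the abelian quotient of rank $\tilde g_b$, so the ``defect'' contributing to the support drops in codimension at least $\delta(b)$ — precisely the quantity controlled by Corollary \ref{cor2.2}, which says $\dim B^{(\tilde g)}\le\dim B-g+\tilde g$. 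Combining the bound on each stratum (where the fiberwise transform lives on an abelian variety of dimension $\tilde g$, so the genuinely nontrivial support can have codimension at most... — here one inputs the smooth-curve computation) with the Severi/Corollary \ref{cor2.2} estimate for the stratum itself should give the clean bound $\operatorname{codim}\ge g$.

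\textbf{Main obstacle.} The delicate step is making the stratum-by-stratum argument rigorous: over $B^{(\tilde g)}$ the fiber $\bar J_b$ is not an abelian variety and $FM_{\mathcal P}$ is not the naive abelian-scheme transform, so I cannot literally say ``the transform of an invariant sheaf is supported on the zero section.'' One needs Arinkin's description of $\mathcal P$ restricted to the fibers, the compatibility in Lemma \ref{lem2.3} relating $\mathcal P$ to a universal sheaf via the Abel–Jacobi map, and a careful bookkeeping of how the support of the transform on $\bar J_b$ interacts with the $g-\tilde g_b$ extra dimensions — this is where the Severi inequality (Lemma \ref{Severi Inequality}) must be fed in to convert a fiberwise codimension statement into the global codimension bound $g$. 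I expect the rest (the reductions in the first paragraph, the fiber-translation argument in the second) to be routine.
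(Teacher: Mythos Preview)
Your opening reduction is fine and matches the paper: by the projection formula one may absorb $\mathcal L_2$ into a twist of the output and $\mathcal L_1$ into a twist of $\mathcal K$, so it is enough to treat $\mathcal L_1=\mathcal L_2=\mathcal O_{\bar J}$.

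The second paragraph, however, contains a genuine error. From Lemma \ref{lem2.4} you correctly extract that translating the \emph{source} by $L_b\in J_b$ has the effect of tensoring the \emph{target} by the line bundle $\mathcal P_{L_b}$. But then $J$-equivariance of $\mathcal K$ only gives
\[
FM_{\mathcal P}(\mathcal K)\otimes \mathcal P_{L_b}\ \simeq\ FM_{\mathcal P}(\mathcal K),
\]
which imposes no constraint whatsoever on $\operatorname{Supp}FM_{\mathcal P}(\mathcal K)$, since tensoring by a line bundle never changes support. It certainly does not make the support invariant under \emph{translation} by $J_b$. In fact your own third paragraph contradicts the claim: over the smooth locus the transform of a translation-invariant complex is supported on the zero section, not on the whole fibre. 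So the assertion $\operatorname{Supp}FM_{\mathcal P}(\mathcal K)=\pi^{-1}(W)$ is false in general, and the strategy of bounding $\operatorname{codim}_B W$ cannot work.

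What the paper does instead is a pointwise argument on the \emph{target}: if $F\in\bar J_b$ lies in the support, then by base change $H^\bullet(\bar J_b,\mathcal P_F\otimes \mathcal K_b)\neq 0$. Now the theorem of the square is used in a finer way: the $J_b$-action on $\bar J_b$ lifts to an action of the $\mathbb G_m$-torsor $T$ associated to $\mathcal P_F|_{J_b}$ on the sheaf $\mathcal P_F$, with $\mathbb G_m$ acting by scaling. Since $\mathcal K_b$ is $J_b$-equivariant, $T$ acts on the nonzero cohomology, and a one-dimensional sub-representation of the commutative group $T$ splits $\mathbb G_m\hookrightarrow T$, forcing $\mathcal P_F|_{J_b}$ to be trivial. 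Pulling back along the Abel--Jacobi map (Lemma \ref{lem2.3}) this means $F|_{C_b^{sm}}\cong\mathcal O_{C_b^{sm}}$, which cuts out a subset of $\bar J_b$ of dimension at most $g-\tilde g_b$. Now the Severi inequality (Corollary \ref{cor2.2}) on the stratum $B^{(\tilde g)}$ gives the global bound $\dim\operatorname{Supp}\leq \dim B$, i.e.\ codimension $\geq g$.

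So the missing idea is precisely this torsor/character trick converting nonvanishing of cohomology into triviality of $\mathcal P_F|_{J_b}$; your vague stratum-by-stratum plan in the last paragraph is pointing in the right direction but without this step there is nothing to feed into the Severi inequality.
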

\begin{proof}
 Since $\mathcal L_1\otimes \mathcal K\in D^b(\bar J)$ also underlies a $J$-equivariant bounded complex and using the projection formula, since $FM_{(-) \otimes p_2^*\mathcal L_2 }=\mathcal L_2 \otimes FM_{(-)}$, we see that it is enough to assume $\mathcal L_1=\mathcal L_2=\mathcal O_{\bar J}$. We proceed in the following steps: 
 
\medskip
\noindent {\bf Step 1.} Let $F\in \operatorname{Supp}FM_{\mathcal P}(\mathcal K)$ (or $FM_{\mathcal P^{\vee}}(\mathcal K)$) and $b=\pi(F)$.  Then by base change we see that \begin{equation}
    H^i(\bar J_b, \mathcal P_F\otimes \mathcal K_b) \neq  0  \ (\text{respectively } H^i(\bar J_b, \mathcal P_F^\vee\otimes \mathcal K_b) \neq  0 ).
\end{equation}We show that this implies $\mathcal P_F|_{J_b}$ is a trivial line bundle in the next step.

\noindent {\bf Step 2.} The argument closely follows \cite{MR2915476} Prop $1$ or \cite{maulik2023fourier} Proposition $4.2$. We know $\mathcal P_F|_{J_b}$ (respectively $\mathcal P_F^\vee|_{J_b}$) is a line bundle on $J_b$. Let $T$ (respectively $T^\vee$) be the  $\mathbb G_m$-torsor corresponding to $\mathcal P_F|_{J_b}$ (respectively $\mathcal P_F^\vee|_{J_b}$). It is naturally an abelian group which fits into the short exact sequence $$1\rightarrow \mathbb G_m \rightarrow T\rightarrow J_b\rightarrow 1 \ (\text{ respectively }1\rightarrow \mathbb G_m \rightarrow T^\vee\rightarrow J_b\rightarrow 1).$$By virtue of Lemma \ref{lem2.4}, we see that the action of $J_b$ on $\bar J_b$ lifts to an action of $T$ on $\mathcal P_F$ (respectively $T^\vee$ on $\mathcal P_F^\vee$) with $\mathbb G_m$ acting via dilation. Since $\mathcal K$ is $J$-equivariant, we let $T$ (respectively $T^\vee$) act on $\mathcal K_b$ via $T\rightarrow J_b$. It follows that $T$ (respectively $T^\vee$) acts on $H^i(\bar J_b, \mathcal P_F\otimes K_b)$ (respectively  $H^i(\bar J_b, \mathcal P_F^\vee\otimes \mathcal K_b$). Thus we get $V\subset H^i(\bar J_b, \mathcal P_F \otimes K_b)$ (respectively $H^i(\bar J_b, \mathcal P_F^\vee\otimes \mathcal K_b)$) a $1$-dimensional $T$ sub-module as $T$ (respectively $T^\vee$) is commutative which corresponds to a character $\chi: T\rightarrow \mathbb G_m$ (respectively $\chi: T^\vee\rightarrow \mathbb G_m$). This gives a splitting of $\mathbb G_m\rightarrow T$ (respectively $\mathbb G_m\rightarrow T^\vee$) since $\mathbb G_m$ acts via dilation. Thus $T$ (respectively $T^\vee$) is the trivial $\mathbb G_m$-torsor which shows $\mathcal P_F|_{J_b}$ is the trivial line bundle. 

\noindent{\bf Step 3.} This step shows that $F|_{C^{sm}_b}\cong \mathcal O_{C^{sm}_b}$ where $C^{sm}_b\subset C_b$ is the smooth locus of $C_b$.

Let $p\in C_b$ be a non-singular point of $C_b$. We look at the Abel-Jacobi map \begin{gather*}
    \mathrm{AJ}: C_b\rightarrow \bar J_b, \\ c\mapsto \mathscr Hom(\mathcal I_c, \mathcal O_{C_b}(-p)).
\end{gather*}Note that $\mathrm{AJ}(C^{sm}_b)\subset J_b$ and $\mathrm{AJ}^*(\mathcal P_F)\cong F$ (from Lemma \ref{lem2.3}). The conclusion follows from Step 2 since $\mathcal P|_F$ is trivial.

\noindent{\bf Step 4.} We show that if $K_b = \{[F]\in J_b: \mathcal P_F|_{J_b}\text{ is trivial} \}$, $\dim K_b\leq g-\tilde g_b$ where $\tilde g_b$ is the geometric genus of $C_b$.

Consider the action morphism $\mu_b : J_b \times \bar J_b \rightarrow \bar J_b$. For $F\in \bar J_b$, let $Z_b =\mu^{-1}(K_b) \cap J_b \times \{[F]\} \subset J_b \times \bar J_b$.  If $(L,F)\in Z_b$, then $L\otimes F|_{C^{sm}_b}\cong \mathcal O_{C^{sm}_b}$.  Looking at the normalization of $ C_b$, we see that the set of such $L $ lies in a countable union of sub-varieties of dimension $g-\tilde g_b$ and hence has no generic point of a sub-scheme of higher dimension. $\therefore \dim Z_b \leq g-\tilde g_b$. Since $\mu: J_b\times \bar J_b \rightarrow \bar J_b$ is smooth of relative dimension $g$, it is enough to show $\dim \mu^{-1}(K_b)\leq 2g-\tilde g_b$. Consider $\mu^{-1}(K_b)\hookrightarrow J_b\times \bar J_b \xrightarrow{\text{projection}}\bar J_b$. The fiber over $[F]\in \bar J_b$ is given by $\mu^{-1}(K_b)\cap J_b\times \{ [F]\}$. So every fiber has dimension $\leq g-\tilde g_b $. Therefore, we get that $\dim \mu^{-1}(K_b)\leq 2g-\tilde g_b$ which shows $\dim K_b\leq g-\tilde g_b$. 

\noindent {\bf  Step 5.} Let $S = \operatorname{Supp}FM_{\mathcal P}(\mathcal K)$ (or $\operatorname{Supp}FM_{\mathcal P^\vee}(\mathcal K)$). Then $S\cap {\bar J_b}\subset K_b\implies \dim S\cap \bar J_b\leq g-\tilde g_b$. From Lemma \ref{Severi Inequality} or Corollary \ref{cor2.2}, we know that $\dim B^{(\tilde g)}\leq g-\tilde g $. Let $\bar J^{(\tilde g)}=\pi^{-1}(B^{(\tilde g)})$.  Consider $S\cap \bar J^{(\tilde g)}\hookrightarrow \bar J^{(\tilde g)}\rightarrow B^{(\tilde g)}$. Each fiber has dimension $\leq g-\tilde g$. So $\dim S\cap \bar J^{(\tilde g)}\leq \dim B^{(\tilde g)}+g-\tilde g \leq \dim B$. It follows that $\dim S\leq \dim B$ being a finite union of the $S\cap \bar J^{(\tilde g)}$'s. Thus we have shown $\operatorname{codim}_{\bar J}S\geq g$. 

This completes the proof.
\end{proof}
Given $\mathcal K_{i,1},\mathcal K_{i,2}$ $J$-equivariant locally free sheaves on $\bar J$, we can consider the object\begin{equation*}
    \mathcal M:= {p_{1,2,\dots,n}}_* \left (\bigotimes_{i=1}^n p_{i,n+1}^* (\mathcal P \otimes p_1^* \mathcal K_{i,1}\otimes p_2^* \mathcal K_{i,2}) \right ) \otimes \pi_{n}^*\omega_{\pi}[g]\in D^b(\bar J).
\end{equation*}
Here the bounded-ness of $\mathcal M$ is a consequence of flatness of $\mathcal P$ with respect to both the factors (see the discussion following \cite{MR3019453} Lemma $6.1$). 
\begin{prop}\label{prop2.6} [Arinkin, Maulik-Shen-Yin] We have the following co-dimension bound \begin{equation}
    \operatorname{codim}_{\bar J^n}\operatorname{Supp}\mathcal M \geq g.
\end{equation}
\end{prop}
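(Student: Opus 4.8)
The plan is to reduce Proposition \ref{prop2.6} to Proposition \ref{prop2.5} by induction on $n$, peeling off one relative factor at a time. For $n=1$ the object $\mathcal M$ is exactly ${p_1}_*(p_{12}^*(\mathcal P\otimes p_1^*\mathcal K_{1,1}\otimes p_2^*\mathcal K_{1,2}))\otimes\pi_1^*\omega_\pi[g]$, which is $FM_{\mathcal P\otimes p_1^*\mathcal K_{1,1}\otimes p_2^*\mathcal K_{1,2}}(\mathcal O_{\bar J})$ up to the twist by $\pi_1^*\omega_\pi[g]$ (a line bundle twist, hence harmless for support); since $\mathcal O_{\bar J}$ is tautologically $J$-equivariant, the first bound of Proposition \ref{prop2.5} gives $\operatorname{codim}_{\bar J}\operatorname{Supp}\mathcal M\geq g$. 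For the inductive step one uses base change along the projection $p_{1,\dots,n}\colon\bar J^{n+1}\to\bar J^{n}$: the $i=n$ factor in the tensor product involves $p_{n,n+1}$, so integrating out the last $\bar J$-coordinate one recognizes $\mathcal M$ as a relative Fourier–Mukai transform (in the $n$-th coordinate) of an object built on $\bar J^{n}$ from the remaining $n-1$ factors. The key point is to organize the computation so that the "coefficient" object on $\bar J^n$ that gets transformed is itself $J$-equivariant in the relevant slot and has controlled support.

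Concretely, I would argue fiberwise over a point of the last copy of $\bar J$, or—cleaner—apply Proposition \ref{prop2.5} with a varying kernel: fix the object $\mathcal N$ on $\bar J^{n}$ obtained by $({p_{1,\dots,n}})_*$ of the product of the first $n-1$ pulled-back kernels, so that $\mathcal M$ is the pushforward along $p_{1,\dots,n-1}$, relative to the $n$-th factor, of $\mathcal N\otimes p_{n,n+1}^*(\mathcal P\otimes\cdots)$. One shows by the same Abel–Jacobi and Theorem-of-the-Square analysis (Lemmas \ref{lem2.3}, \ref{lem2.4}) that the support of $\mathcal M$ meets each fiber $\bar J_b$ inside the locus where $\mathcal P_F|_{J_b}$ is trivial, exactly as in Steps 1–4 of the proof of Proposition \ref{prop2.5}: the point is that being in the support forces some cohomology group $H^i(\bar J_b,\mathcal P_F\otimes(\text{fiber of }\mathcal N))$ to be nonzero, and $\mathcal N$, built out of $J$-equivariant input by equivariant operations, restricts on fibers to something the torus $T$ acts on through $T\to J_b$. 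Then Step 4's bound $\dim K_b\leq g-\tilde g_b$ together with the Severi inequality (Lemma \ref{Severi Inequality}, Corollary \ref{cor2.2}) gives $\dim\operatorname{Supp}\mathcal M\cap\bar J^{(\tilde g)}\leq\dim B^{(\tilde g)}+(g-\tilde g)\leq\dim B$, hence the codimension-$g$ bound as in Step 5.

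The main obstacle is the bookkeeping in the inductive step: one must verify that the intermediate object $\mathcal N$ on $\bar J^n$ really is built from $J$-equivariant data by operations that preserve equivariance in the $n$-th slot, so that the torus-action argument (Step 2) applies verbatim. This requires care because the $\bar J$-factors are coupled through $\mathcal P$, and the diagonal $J$-action one wants to use on $\mathcal N$ has to be the one compatible with both the pushforward and the Theorem of the Square. An alternative, perhaps more transparent, route is to avoid the induction entirely and run the fiberwise argument directly on $\bar J^n$: a point $(F_1,\dots,F_n)\in\bar J_b^n$ lies in $\operatorname{Supp}\mathcal M$ only if a suitable Künneth-type cohomology group is nonzero, which (by applying the torus argument simultaneously in each factor, using that each $\mathcal K_{i,j}$ is $J$-equivariant) forces each $\mathcal P_{F_i}|_{J_b}$ to be trivial; since each such condition already cuts down to a set of dimension $\leq g-\tilde g_b$ in a single copy and the conditions are "independent" across copies, the fiber of $\operatorname{Supp}\mathcal M$ over $b$ has dimension $\leq n(g-\tilde g_b)$, and the Severi estimate closes the argument. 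I expect the author follows essentially the latter, more direct, approach, citing the proof of Proposition \ref{prop2.5} for the repeated steps.
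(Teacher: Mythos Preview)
Your direct fiberwise approach is on the right track and is essentially what the paper does, but there is a genuine error in the key step. After base change to a point $(F_1,\dots,F_n)\in\bar J_b^{\,n}$, the relevant cohomology is
\[
H^\bullet\!\left(\bar J_b,\ \bigotimes_{i=1}^n \bigl(\mathcal P_{F_i}\otimes \mathcal O_{\bar J_b}^{\operatorname{rk}\mathcal K_{i,1}}\otimes {\mathcal K_{i,2}}_b\bigr)\right),
\]
computed on the \emph{single} copy of $\bar J_b$ coming from the $(n+1)$-th factor. There is therefore only one $J_b$-action in play, and the torus that lifts it is the $\mathbb G_m$-torsor associated to the \emph{product} line bundle $\bigl(\bigotimes_{i}\mathcal P_{F_i}\bigr)\big|_{J_b}$. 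The splitting argument shows exactly that this product is trivial; it does \emph{not} force each $\mathcal P_{F_i}|_{J_b}$ to be trivial, and the conditions are certainly not ``independent across copies''. (For a concrete counterexample with $n=2$ and trivial $\mathcal K_{i,j}$, the Theorem of the Square gives $\mathcal P_{F_1}\otimes\mathcal P_{F_2}\simeq\mathcal P_{F_1\otimes F_2}$ for $F_1\in J_b$, so nonvanishing only pins down $F_1\otimes F_2$.) Via Abel--Jacobi this becomes the single condition $(F_1\otimes\cdots\otimes F_n)|_{C_b^{\mathrm{sm}}}\cong\mathcal O_{C_b^{\mathrm{sm}}}$, and the correct fiberwise bound is $\dim K_b\le (n-1)g+(g-\tilde g_b)=ng-\tilde g_b$, not your $n(g-\tilde g_b)$. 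Combined with the Severi inequality this still yields $\dim S\le\dim B+(n-1)g$, hence codimension $\ge g$; so the fix is local, but the intermediate claim as written is false.

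Your inductive approach does not work as stated: all $n$ kernels $p_{i,n+1}^*(\mathcal P\otimes\cdots)$ involve the same $(n+1)$-th coordinate being integrated out, so there is no way to peel one off and leave a $J$-equivariant remainder on $\bar J^{n}$. Indeed $\mathcal P_{F_1}\otimes\cdots\otimes\mathcal P_{F_{n-1}}$ is not $J_b$-equivariant unless the product is already trivial on $J_b$, which is precisely what one is trying to prove; so Proposition~\ref{prop2.5} cannot be invoked in that slot. The paper runs the torus argument directly on the $n$-fold product as above, with the single product-torsor replacing the individual ones.
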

\begin{proof}
    The proof is parallel to Proposition \ref{prop2.5}. We proceed in the same steps as before.

    \noindent{\bf Step 1.} Let $(F_1,F_2,\dots, F_n)\in \bar J^n$ be a closed point of $S:=\operatorname{Supp}M$. Set $b=\pi_n(F_1,F_2,\dots,F_n)$. By base change, we get that for some $i$,  \begin{equation}
        H^i\left (\bar J_b, \bigotimes_{i=1}^n \left (\mathcal P_{F_i}\otimes \mathcal O_{\bar J_b}^{\operatorname{rk}\mathcal K_{i,1}}\otimes {\mathcal K_{i,2}}_b\right ) \right )\neq 0.
    \end{equation}
    \noindent{\bf Step 2.} As in Step 2 of Proposition  \ref{prop2.5}, we let $T_i$ be the $\mathbb G_m$-torsor corresponding to $\mathcal P_{F_i}|_{J_b}$. As before we see that the action of $J_b$ on $\bar J_b$ extends to an action of $T_i$ on $\mathcal P_{F_i}$ with $\mathbb G_m$ acting via dilation. Since $\mathcal O_{\bar J_b}$ and ${\mathcal K_{i,2}}_b$ are $J_b$-equivariant, we let $T_i$ act on $\mathcal O_{\bar J_b}$ and ${\mathcal K_{i,2}}_b$ via $T_i\rightarrow J_b$. If $T$ is the $\mathbb G_m$-torsor corresponding to $\left(\mathcal P_{F_1}\otimes \mathcal P_{F_2}\otimes \cdots \otimes \mathcal P_{F_n}\right)|_{J_b}$, we see that $T$ acts on $\bigotimes_{i=1}^n 
 \left( \mathcal P_{F_i}\otimes \mathcal O_{\bar J_b}^{\operatorname{rk}\mathcal K_{i,1}}\otimes {\mathcal K_{i,2}}_b \right )$ and hence on $ H^i\left (\bar J_b, \bigotimes_{i=1}^n \left (\mathcal P_{F_i}\otimes \mathcal O_{\bar J_b}^{\operatorname{rk}\mathcal K_{i,1}}\otimes {\mathcal K_{i,2}}_b\right ) \right )$ with $\mathbb G_m$ acting via dilation. Since $T$ is commutative we get a $1$-dimensional $T$ sub-module $V\subset H^i\left (\bar J_b, \bigotimes_{i=1}^n \left (\mathcal P_{F_i}\otimes \mathcal O_{\bar J_b}^{\operatorname{rk}\mathcal K_{i,1}}\otimes {\mathcal K_{i,2}}_b\right ) \right ) $ which gives a splitting of $\mathbb G_m\rightarrow T$ It follows that $T$ is the trivial $\mathbb G_m$-torsor and hence $\left (\mathcal P_{F_1}\otimes \mathcal P_{F_2}\otimes \cdots \otimes \mathcal P_{F_n} \right )|_{J_b}$ is the trivial line bundle. 

 \noindent{\bf Step 3.} Using the Abel-Jacobi map $\mathrm{AJ}$, we see that this implies $\left (F_1\otimes F_2\otimes \cdots \otimes F_n \right )|_{C^{sm}_b}\cong \mathcal O_{C^{sm}_b}$ as in Proposition \ref{prop2.5}.

 \noindent{\bf Step 4.} Using analogous arguments as in Step 4 of Proposition  \ref{prop2.5}, we see that over a closed point $b\in B$, $$K_b:= \left\{(F_1, F_2,\dots, F_n)\in \underbrace{\bar J_b \times \bar J_b \times\cdots \times \bar J_b}_{n\text{ times}} : \left (F_1\otimes F_2\otimes \cdots \otimes F_n \right )|_{C^{sm}_b}\cong \mathcal O_{C^{sm}_b}  \right \}$$has dimension $\leq (n-1)g+\tilde g_b$ where $\tilde g_b$ is the geometric genus of $C_b$, whence $\operatorname{codim}_{\bar J^{n}}S\geq g$. 

\noindent{\bf Step 5.} We see that $S\cap \bar J_b^{n}$ has dimension $\leq g(n-1)+\tilde g_b$. In conjunction with Lemma \ref{Severi Inequality} or Corollary \ref{cor2.2}, we see that $\dim S\leq \dim B +(n-1)g$.

     This completes the proof.  

\end{proof}
Let $\iota: \bar J\times_B \bar J\hookrightarrow \bar J\times \bar J$ be the closed embedding. For $N\geq 0$ and $\mathcal K_{i,1}, \mathcal K_{i,2}$ $J$-equivariant locally free sheaves on $\bar J$, one can consider $$\bigotimes_{i=1}^n\iota_* \left (\mathcal P \otimes p_1^*\mathcal K_{i,1} \otimes p_2^* \mathcal K_{i,2} \right )\in D^b(\bar J\times \bar J). $$It is bounded and exact off $\bar J\times_B \bar J \subset \bar J\times \bar J$. Let $\mathcal K_1, \mathcal K_2$ be $J$-equivariant locally free sheaves on $\bar J$. Then one can consider $$\mathcal T=(\mathcal P^{-1}\otimes p_1^*\mathcal K_1\otimes p_2^*\mathcal K_2)\circ \bigotimes_{i=1}^N\iota_* \left (\mathcal P \otimes p_1^*\mathcal K_{i,1} \otimes p_2^* \mathcal K_{i,2} \right )\in D^b(\bar J\times \bar J).$$By definition (and by the flatness of $\mathcal P$ with respect to both factors), it is constructed by pulling back the two objects  $\bigotimes_{i=1}^n\iota_* \left (\mathcal P \otimes p_1^*\mathcal K_{i,1} \otimes p_2^* \mathcal K_{i,2} \right )$ ,  $\mathcal P^{-1}\otimes p_1^*\mathcal K_1\otimes p_2^*\mathcal K_2 $ to the triple product $\bar J \times\bar J\times_B \bar J$, taking their tensor product and then push-forwarding them onto the first and third factors.  
\begin{prop}\label{prop2.7}
   $\mathcal T$ is supported on a co-dimension $g$ subset of $\bar J\times_B \bar J$. 
\end{prop}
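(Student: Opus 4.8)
The plan is to imitate the proofs of Propositions \ref{prop2.5} and \ref{prop2.6}, working fibrewise over $B$ and invoking the torsor/theorem-of-the-square mechanism; the only genuinely new feature is that the composition with $\mathcal P^{-1}$ makes a slice of $\mathcal P$ in its \emph{second} variable appear next to a slice of $\mathcal P^{-1}$ in its \emph{first} variable. First I would set up the reductions. Since $\mathcal T$ is bounded (as noted above) and is a composition of two kernels each exact off $\bar J\times_B\bar J$, it is supported on $\bar J\times_B\bar J$, which has dimension $\dim B+2g$; so the assertion is that $\dim\operatorname{Supp}\mathcal T\le\dim B+g$. By the projection formula, in the correspondence composition defining $\mathcal T$ the factor $p_1^*\bigl(\bigotimes_{i=1}^N\mathcal K_{i,1}\bigr)$ and the factor $p_2^*\mathcal K_2$ pull out as twists by line bundles pulled back from the two outer copies of $\bar J$, hence do not affect $\operatorname{Supp}\mathcal T$, while the bundles $\mathcal K_{i,2}$ together with $\mathcal K_1$ combine into a single $J$-equivariant line bundle $\mathcal L_0$ on the middle (``integration'') copy of $\bar J$. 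So it suffices to treat $\mathcal T=p_{13*}\bigl(p_{12}^*(\bigotimes_{i=1}^N\iota_*\mathcal P)\otimes p_2^*\mathcal L_0\otimes p_{23}^*(\iota_*\mathcal P^{-1})\bigr)$, and $\mathcal L_0$ will play exactly the role of the $J$-equivariant complex $\mathcal K$ in Proposition \ref{prop2.5}.

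Next, the fibrewise base change and the torsor argument. Fix a closed point $(F,G)$ of $\bar J\times_B\bar J$ with $\pi(F)=\pi(G)=b$. Since $\mathcal P$ is flat over both factors, base change along $\{(F,G)\}\hookrightarrow\bar J\times\bar J$ identifies $\mathcal T_{(F,G)}$ with $R\Gamma(\bar J_b,-)$ of $\bigl(\mathcal P|_{\{F\}\times\bar J_b}\bigr)^{\otimes N}\otimes\bigl(\mathcal P^{-1}|_{\bar J_b\times\{G\}}\bigr)\otimes\mathcal L_0|_{\bar J_b}$, where I used $\mathcal P^{-1}=\mathcal P^\vee\otimes\pi_2^*\omega_\pi[g]$ and the fact that $\pi_2^*\omega_\pi$ restricts on a fibre to the $J$-equivariant bundle $\omega_{\bar J_b}$ (the shift is irrelevant for the support). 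Write $\mathcal P_F:=\mathcal P|_{\{F\}\times\bar J_b}$ and $\mathcal P_G:=\mathcal P|_{\bar J_b\times\{G\}}$. If $(F,G)\in\operatorname{Supp}\mathcal T$, then $H^i\bigl(\bar J_b,\ \mathcal P_F^{\otimes N}\otimes\mathcal P_G^{\vee}\otimes(\text{$J$-equivariant line bundle})\bigr)\ne 0$ for some $i$. Now argue as in Step $2$ of Proposition \ref{prop2.5}: let $T$ be the $\mathbb G_m$-torsor on $J_b$ attached to $(\mathcal P_F|_{J_b})^{\otimes N}\otimes(\mathcal P_G|_{J_b})^{-1}$. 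By the theorem of the square (Lemma \ref{lem2.4}, used in both variables of $\mathcal P$; for the second variable this is the symmetric statement, and it is exactly the form already implicit in the proof of Proposition \ref{prop2.5}), the $J_b$-action on $\bar J_b$ lifts to an action of $T$ on $\mathcal P_F^{\otimes N}\otimes\mathcal P_G^{\vee}$ with $\mathbb G_m$ acting by dilation, while $T$ acts on the $J$-equivariant factor through $T\to J_b$ with $\mathbb G_m$ trivial; a one-dimensional $T$-submodule of the non-vanishing cohomology then splits $\mathbb G_m\hookrightarrow T$, so $T$ is trivial and $(\mathcal P_F|_{J_b})^{\otimes N}\cong\mathcal P_G|_{J_b}$. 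Pulling back along the Abel--Jacobi map as in Step $3$ of Proposition \ref{prop2.5} (Lemma \ref{lem2.3}), and using $\operatorname{AJ}(C^{sm}_b)\subseteq J_b$ and $\operatorname{AJ}^*\mathcal P_F\cong F$, this gives $(F|_{C^{sm}_b})^{\otimes N}\cong G|_{C^{sm}_b}$.

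Finally, the dimension count. Thus $\operatorname{Supp}\mathcal T\cap(\bar J_b\times\bar J_b)$ is contained in $W_b:=\{(F,G)\in\bar J_b\times\bar J_b:\ (F|_{C^{sm}_b})^{\otimes N}\cong G|_{C^{sm}_b}\}$. Projecting $W_b$ to its first factor, the fibre over $F$ is $\{G\in\bar J_b:\ G|_{C^{sm}_b}\cong(F|_{C^{sm}_b})^{\otimes N}\}$, and by the argument of Step $4$ of Proposition \ref{prop2.6} (stratifying $\bar J_b$ by the local type of the sheaf at the singular points of $C_b$ and using the smooth action $\mu_b\colon J_b\times\bar J_b\to\bar J_b$, so that each such locus is, on each stratum, a countable union of $(g-\tilde g_b)$-dimensional subvarieties) this fibre has dimension $\le g-\tilde g_b$; hence $\dim W_b\le 2g-\tilde g_b$. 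Therefore, writing $\bar J^{(\tilde g)}=\pi^{-1}(B^{(\tilde g)})$ and using $\dim B^{(\tilde g)}\le\dim B-g+\tilde g$ (Lemma \ref{Severi Inequality}, Corollary \ref{cor2.2}), the part of $\operatorname{Supp}\mathcal T$ lying over $B^{(\tilde g)}$ has dimension at most $(\dim B-g+\tilde g)+(2g-\tilde g)=\dim B+g$; summing over the finitely many strata gives $\dim\operatorname{Supp}\mathcal T\le\dim B+g$, i.e. $\operatorname{codim}_{\bar J\times_B\bar J}\operatorname{Supp}\mathcal T\ge g$.

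The step I expect to be the main obstacle is the middle one: identifying precisely which slices of $\mathcal P$ and of $\mathcal P^{-1}$ occur on the fibre $\bar J_b$, and checking that $\mathcal P_F^{\otimes N}\otimes\mathcal P_G^{\vee}\otimes\mathcal L_0|_{\bar J_b}$ — which is only a \emph{twisted} $J_b$-equivariant complex, not an honestly equivariant one — still carries the $\mathbb G_m$-dilating torsor action needed to run the argument of Proposition \ref{prop2.5}. This is exactly where the theorem of the square for $\mathcal P$ in its second variable (equivalently, the symmetry of Arinkin's Poincar\'e sheaf) enters. Once that is in place, everything else is a routine repetition of the codimension bookkeeping in Propositions \ref{prop2.5} and \ref{prop2.6}.
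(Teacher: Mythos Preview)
Your base-change step is where the argument breaks. The first kernel is $\bigotimes_{i=1}^N\iota_*\bigl(\mathcal P\otimes p_1^*\mathcal K_{i,1}\otimes p_2^*\mathcal K_{i,2}\bigr)$, a derived tensor product taken on the \emph{absolute} product $\bar J\times\bar J$, not on $\bar J\times_B\bar J$. When you restrict to the fibre $\{F\}\times\bar J_b$ of the triple product, flatness of $\mathcal P$ only tells you that each factor $\iota_*\mathcal P|_{\{F\}\times\bar J}=j_*\mathcal P_F$ with $j:\bar J_b\hookrightarrow\bar J$; the tensor product therefore restricts to $\bigotimes_i j^*j_*\bigl(\mathcal P_F\otimes{\mathcal K_{i,2}}_b\otimes\mathcal O^{\operatorname{rk}\mathcal K_{i,1}}\bigr)$, \emph{not} to $\mathcal P_F^{\otimes N}\otimes(\text{equivariant bundle})$. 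The object $j^*j_*(\mathcal P_F\otimes\cdots)$ is a genuine complex on the possibly singular fibre $\bar J_b$, with nontrivial higher cohomology sheaves coming from the self-intersection of $\bar J_b$ inside $\bar J$; it is not simply $\mathcal P_F\otimes(\cdots)$. So the formula you wrote for $\mathcal T_{(F,G)}$ is wrong, and the torsor argument cannot be applied directly.

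This is exactly the point that distinguishes Proposition~\ref{prop2.7} from Propositions~\ref{prop2.5} and~\ref{prop2.6} (where the tensor products are over the \emph{relative} product and no $j^*j_*$ appears). The paper's proof inserts two extra steps to handle it: first one shows that each cohomology sheaf $\mathcal H^k\bigl(j^*j_*(\mathcal P_F\otimes\cdots)\bigr)$ is a direct sum of finitely many copies of $\mathcal P_F\otimes\cdots$ itself, using the decomposition $j^*j_*\mathcal O_{\bar J_b}\simeq\bigoplus_k\mathcal H^k(j^*j_*\mathcal O_{\bar J_b})[-k]$ (which comes from base change and the corresponding fact for $\{b\}\hookrightarrow B$) together with a projection-formula comparison of $j_*\bigl(j^*j_*\mathcal O_{\bar J_b}\otimes(\mathcal P_F\otimes\cdots)\bigr)$ with $j_*j^*j_*(\mathcal P_F\otimes\cdots)$; then one filters the complicated cohomology by the standard truncation, so that nonvanishing forces nonvanishing of some $H^\bullet\bigl(\bar J_b,\bigotimes_i(\mathcal P_F\otimes\cdots)\otimes\mathcal P_G^\vee\otimes\cdots\bigr)$. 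Only after this reduction can one run the torsor/Abel--Jacobi argument and the Severi count exactly as you describe. Your last paragraph misidentifies the obstacle: the theorem-of-the-square input is routine; it is this $j^*j_*$ analysis that is the new content. (Minor point: your $\mathcal L_0$ is a $J$-equivariant vector bundle, not a line bundle.)
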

\begin{proof}
    The proof is very similar to Proposition \ref{prop2.5} and Proposition \ref{prop2.6}. We proceed in the following steps.

    \noindent{\bf Step 1.} We first show that the set-theoretic support of $\mathcal T$ is contained in $\bar J\times_B \bar J$. By definition and the projection formula, the object $\mathcal T$ is obtained via the push-forward of an object supported on \begin{gather*}
        \bar J\times_B \bar J\times_B \bar J \hookrightarrow \bar J\times \bar J\times_B \bar J.
    \end{gather*}Therefore it can be written as the push-forward through the following chain of maps \begin{gather*}
        \bar J\times_B \bar J\times_B \bar J \rightarrow \bar J\times_B \bar J \xrightarrow[]{\iota}\bar J\times \bar J
    \end{gather*}where the first morphism is the projection onto the first and third factors. This proves the claim.

    \noindent{\bf Step 2.}  Let $(F_1, F_2)\in \bar J_b \times \bar J_b$ be in $\operatorname{Supp}\mathcal T$. Then by base change, we see that
\begin{gather}\label{eq19}
    H^\bullet \left ( \bar J_b, \left( \bigotimes_{i=1}^Nj^*j_* \left (\mathcal P_{F_1}\otimes {\mathcal K_{i,2}}_b\otimes \mathcal O_{\bar J_b}^{\operatorname{rk}\mathcal K_{i,1}}\right)  \right ) \otimes \mathcal P_{F_2}^\vee \otimes {\mathcal K_1}_b\otimes \mathcal O_{\bar J_b}^{\operatorname{rk}\mathcal K_2}  \right ) \neq 0
\end{gather}where $j: \bar J_b \hookrightarrow \bar J$ is the inclusion of the fiber over $b$. Since $\bar J$ is non-singular, $j^*j_*(\mathcal P_{F_1}\otimes {\mathcal K_{i,2}}_b\otimes \mathcal O_{\bar J_b}^{\operatorname{rk}\mathcal K_{i,1}})$ is perfect on the possibly singular fiber $\bar J_b$. The next step is to describe the cohomology of this complex.  

\noindent{\bf Step 3.} We claim that each cohomology sheaf $\mathcal H^k(j^*j_*(\mathcal P_{F_1}\otimes {\mathcal K_{i,2}}_b\otimes \mathcal O_{\bar J_b}^{\operatorname{rk}\mathcal K_{i,1}}))$ is a direct sum of finite copies of $\mathcal P_{F_1}\otimes {\mathcal K_{i,2}}_b\otimes \mathcal O_{\bar J_b}^{\operatorname{rk}\mathcal K_{i,1}}$ on $\bar J_b$. 

To see this, note that we have the decomposition \begin{gather*}
    j^*j_*\mathcal O_{\bar J_b}\simeq \bigoplus_{k}\mathcal H^{k}(j^*j_*\mathcal O_{\bar J_b})[-k]
\end{gather*}which follows from base change and the corresponding statement for the embedding $\{b\}\hookrightarrow B$.

Next we relate the two objects \begin{gather}\label{20}j^*j_*(\mathcal P_{F_1}\otimes {\mathcal K_{i,2}}_b\otimes \mathcal O_{\bar J_b}^{\operatorname{rk}\mathcal K_{i,1}}) \ , \ j^*j_*\mathcal O_{\bar J_b}\otimes \mathcal P_{F_1}\otimes {\mathcal K_{i,2}}_b\otimes \mathcal O_{\bar J_b}^{\operatorname{rk}\mathcal K_{i,1}} \in D^b(\bar J_b). \end{gather}We have by the projection formula 
\begin{equation*}\begin{aligned}
    & j_* \left (j^*j_*\mathcal O_{\bar J_b}\otimes \mathcal P_{F_1}\otimes {\mathcal K_{i,2}}_b\otimes \mathcal O_{\bar J_b}^{\operatorname{rk}\mathcal K_{i,1}} \right ) \\  = & j_*\mathcal O_{\bar J_b} \otimes j_*\left (\mathcal P_{F_1}\otimes {\mathcal K_{i,2}}_b\otimes \mathcal O_{\bar J_b}^{\operatorname{rk}\mathcal K_{i,1}}\right )  \\ =& j_*\left (\mathcal O_{\bar J_b}\otimes j^*j_*(\mathcal P_{F_1}\otimes {\mathcal K_{i,2}}_b\otimes \mathcal O_{\bar J_b}^{\operatorname{rk}\mathcal K_{i,1}})\right)  \\ =& j_*j^*j_* \left ( \mathcal P_{F_1}\otimes {\mathcal K_{i,2}}_b\otimes \mathcal O_{\bar J_b}^{\operatorname{rk}\mathcal K_{i,1}}\right).
    \end{aligned}
\end{equation*}
    Therefore the cohomology sheaves of the two objects in (\ref{20}) which are $\mathcal O_{\bar J_b}$-modules are isomorphic as $\mathcal O_{\bar J}$-modules. This forces them to be isomorphic as $\mathcal O_{\bar J_b}$-modules. The claim then follows from the decomposition of $j^*j_*\mathcal O_{\bar J_b}$. 

    \noindent{\bf Step 4.} By the claim of Step 3, we know that the object $j^*j_*(\mathcal P_{F_1}\otimes {\mathcal K_{i,2}}_b\otimes \mathcal O_{\bar J_b}^{\operatorname{rk}\mathcal K_{i,1}}) $ admits an increasing filtration induced by the standard truncation functors, whose graded pieces are direct sums of finite copies of $\mathcal P_{F_1}\otimes {\mathcal K_{i,2}}_b\otimes \mathcal O_{\bar J_b}^{\operatorname{rk}\mathcal K_{i,1}}$.  Hence the cohomology (\ref{eq19}) admits a filtration whose graded pieces are direct sums of finite copies of  \begin{equation}\label{eq21}
        H^\bullet \left ( \bar J_b,\left( \bigotimes_{i=1}^N \left (\mathcal P_{F_1}\otimes {\mathcal K_{i,2}}_b\otimes \mathcal O_{\bar J_b}^{\operatorname{rk}\mathcal K_{i,1}}\right)  \right ) \otimes \mathcal P_{F_2}^\vee \otimes {\mathcal K_1}_b\otimes \mathcal O_{\bar J_b}^{\operatorname{rk}\mathcal K_2}  \right ).
    \end{equation}
    In particular, we obtain from Equation (\ref{eq19}), there is a non-trivial cohomology of type (\ref{eq21}). 

    \noindent{\bf  Step 5.} By the analogous arguments as in Proposition \ref{prop2.6}, we show that this implies $(F_1|_{C^{sm}_b})^{\otimes N}\otimes (F_2|_{C^{sm}_b})\cong \mathcal O_{C^{sm}_b}$ which is a co-dimension $\tilde g_b$ condition on $\bar J_b^2$. Then, by Severi Inequality, Lemma \ref{Severi Inequality} or Corollary \ref{cor2.2}, we conclude that the support is contained in a co-dimension $g$ sub-set of $\bar J^2$ which completes the proof.  
    \end{proof}
    \begin{rmk}We note that results of Proposition  \ref{prop2.5}, Proposition \ref{prop2.6} and Proposition  \ref{prop2.7} still hold if $\mathcal L_1,\mathcal L_2, \mathcal K_{i,1}, \mathcal K_{i,2}, \mathcal K_1, \mathcal K_2, \mathcal K$, instead of being $J$-equivariant fit into exact triangles of $J$-equivariant locally free sheaves. So in particular, we can take $\mathcal L_\theta=\mathcal K_\theta=\mathcal K_{i,\theta}=p_\theta^*T_{\bar J}, \theta=1,2$ thanks to the short exact sequence \begin{equation}
         0\rightarrow \pi^*\Omega_B\rightarrow \Omega_{\bar J}\rightarrow \Omega_{\bar J/B}\rightarrow 0.
    \end{equation}We mention the following lemma, which also enables us to take wedge products of the co-tangent bundle in Propositions \ref{prop2.5},\ref{prop2.6} and \ref{prop2.7}. 
    \begin{lem}\label{lem2.9}
          Let $$0\rightarrow A_1\rightarrow A_2\rightarrow A_3\rightarrow 0$$ be a short exact sequence in $\operatorname{Coh}(\bar J)$. Then for the derived exterior power $\wedge^{k}A_2\in D^b(\bar J)$, we have finitely many objects in $D^b(\bar J)$, $B_0,B_1,B_2,\cdots, B_k=\wedge^k A_2$ together with morphisms $B_{j-1}\rightarrow B_j$ that fit into the exact triangles $$B_{j-1}\rightarrow B_j\rightarrow \wedge^{k-j}A_1\otimes^{\mathbb L}\wedge^j A_3 \xrightarrow{+1} B_{j-1}[+1].$$
    \end{lem}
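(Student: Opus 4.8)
The plan is to induct on $k$, using the décalage/filtration behaviour of the derived exterior power functor applied to a two-term complex. Recall first that for a short exact sequence $0\to A_1\to A_2\to A_3\to 0$ in $\operatorname{Coh}(\bar J)$, one can represent $A_2$ (up to quasi-isomorphism) by a genuine complex, and the derived exterior power $\wedge^k A_2$ is computed via the Dold–Puppe construction; the standard fact I would invoke is the \emph{Koszul/Illusie filtration}: for a complex that is an extension of $A_3$ by $A_1$, the object $\wedge^k(A_2)$ carries a finite decreasing filtration whose associated graded pieces are $\wedge^{k-j}A_1\otimes^{\mathbb L}\wedge^j A_3$ for $j=0,\dots,k$. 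This is exactly the derived analogue of the classical formula for the exterior power of a direct sum, and it is precisely the content we want, just reformulated as a tower of exact triangles $B_{j-1}\to B_j\to \wedge^{k-j}A_1\otimes^{\mathbb L}\wedge^j A_3\xrightarrow{+1}$.

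Concretely, I would proceed as follows. First, reduce to the case where $A_1$ and $A_2$ are locally free by resolving: since $\bar J$ is non-singular, every coherent sheaf has a finite locally free resolution, and one can choose compatible resolutions $P_1^\bullet\to A_1$, $P_2^\bullet\to A_2$ fitting into a short exact sequence of complexes of locally free sheaves with locally free cokernel $P_3^\bullet\to A_3$ (the horseshoe lemma in the locally free setting). For a short exact sequence of locally free sheaves $0\to E_1\to E_2\to E_3\to 0$, the exterior power $\wedge^k E_2$ has the honest, non-derived filtration with graded pieces $\wedge^{k-j}E_1\otimes \wedge^j E_3$; applying this termwise to the resolutions and taking totalizations produces the desired tower of triangles, with $\wedge^{k-j}E_1\otimes\wedge^j E_3$ computing $\wedge^{k-j}A_1\otimes^{\mathbb L}\wedge^j A_3$ because the factors are locally free. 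Setting $B_j$ to be the $j$-th stage of this filtration gives $B_0=\wedge^k A_1$, $B_k=\wedge^k A_2$, and the stated exact triangles.

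Alternatively — and this is the cleaner inductive route I would actually write up — one uses the exact triangle $A_1\to A_2\to A_3\xrightarrow{+1}$ together with the fundamental triangle for derived exterior powers of a cone: for any morphism $u\colon X\to Y$ in $D^b(\bar J)$ with cone $Z$, there is a natural exact triangle $\wedge^{k-1}X\otimes^{\mathbb L} Z \to \wedge^k Y \to \wedge^k Z\xrightarrow{+1}$ after suitable identifications, or more precisely a finite filtration of $\wedge^k Y$ with graded pieces $\wedge^{k-j}X\otimes^{\mathbb L}\wedge^j Z$. Iterating the short version $k$ times, peeling off one factor of $A_3$ at a time, builds the tower $B_0,\dots,B_k$ directly. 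Either way the induction is on $k$ with the $k=0,1$ cases being trivial ($\wedge^0=\mathcal O_{\bar J}$ and $\wedge^1 A_2=A_2$ with its defining triangle).

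The main obstacle is purely bookkeeping: one must be careful that the derived exterior power $\wedge^k$ of a complex is \emph{not} simply computed termwise in a naive way (the Dold–Puppe/Illusie subtleties with signs and with the difference between $\wedge$ and divided/symmetric powers in positive characteristic), so I would either cite Illusie's \emph{Complexe cotangent} (Vol. I, Ch. I, §4) for the existence of the filtration on $\wedge^k$ of an extension, or work entirely with locally free resolutions where all the functors are exact and no derived subtleties arise — since we are over $\mathbb C$ and $\bar J$ is smooth, the latter is harmless and I expect that is the path of least resistance. No genuinely new geometric input about $\bar J$ is needed beyond its smoothness (to get finite locally free resolutions).
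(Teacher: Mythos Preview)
Your sketch is correct. The paper does not actually prove this lemma: it simply cites \cite{lichtenbaum2018constantfunctionalequationderived}, Corollary~2.2, for the statement. What you have written --- the Koszul/Illusie filtration on the derived exterior power of an extension, established either by passing to locally free resolutions via the horseshoe lemma (where the filtration is the classical one and all tensor products are automatically derived) or by invoking Illusie's \emph{Complexe cotangent} directly --- is precisely the argument underlying that reference, so you have in effect supplied the proof the paper outsources. One small caution: the ``fundamental triangle'' $\wedge^{k-1}X\otimes^{\mathbb L} Z \to \wedge^k Y \to \wedge^k Z$ in your alternative route is not correct as a single exact triangle (the cone of $\wedge^k Y\to\wedge^k Z$ has a more complicated filtration, not just one graded piece); but since you immediately retreat to the full filtration with graded pieces $\wedge^{k-j}X\otimes^{\mathbb L}\wedge^j Z$, which \emph{is} the right statement, this does no damage.
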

    \begin{proof}
        For the proof of this statement, we refer to \cite{lichtenbaum2018constantfunctionalequationderived} Corollary 2.2. 
    \end{proof}
This will be used later to deduce similar co-dimension inequalities of the supports of appropriate $K$-theory classes twisted by Adams' operations on the co-tangent bundle.
    \end{rmk}

    \subsection{Adams Operations in $K$-theory} Having proved the Arinkin-type dimension bounds, we review the Adams operations in $K$-theory following \cite{MR910201}. This will be used to deduce the expected vanishing results. 
    
    For a closed immersion $i:X\hookrightarrow Y$ of finite type schemes, we define $K_X(Y)$ to be the Grothendieck group of bounded complexes of locally free sheaves exact off $X$. In \cite{MR910201}, a sequence of Adams operations \begin{equation*}
        \psi^N: K_X(Y)\rightarrow K_X(Y)
    \end{equation*} were constructed using the $\lambda$-ring structure on $K_X(Y)$ given by the derived exterior product and the induction formula \begin{equation*}
        \psi^N-\psi^{N-1}\otimes \lambda^1 + \cdots + (-1)^{N-1}\psi^1\otimes \lambda^{N-1}+(-1)^NN\lambda^N =0.
    \end{equation*}
    The following formula was proved in \cite{MR1707198} Theorem $3.1$. For $F_\bullet\in K_X(Y)$, we have 
    \begin{equation}\label{eq23}
        {\operatorname{ch}^Y_X}_k(\psi^N(F_\bullet))=N^k{{\operatorname{ch}^Y_X}_k(F_\bullet)}
    \end{equation}where ${\operatorname{ch}^Y_X}\in A^k(X\rightarrow Y)$ denotes the degree $k$ part of the localized Chern character in the bi-variant Chow group $A^k(X\rightarrow Y)$.     

    We return to the closed immersion $\iota: \bar J\times_B \bar J\hookrightarrow \bar J\times\bar J$ and fix it. Since $\bar J\times \bar J$ is non-singular, capping\footnote{Here we use the cap product $\cap: K_X(Y)\times K_\bullet(Y)\rightarrow K_\bullet(X)$ as defined in \cite{MR910201} Section $1.7$} with $\mathcal O_{\bar J\times \bar J}\in K_\bullet(\bar J\times \bar J)$ induces an isomorphism \begin{equation}\label{eq24}
        \cap \mathcal O_{\bar J\times \bar J}: K_{\bar J\times_B \bar J}(\bar J\times \bar J) \xrightarrow[]{\simeq}K_\bullet (\bar J\times_B \bar J)
    \end{equation}whose inverse is the map that sends $F\in K_\bullet (\bar J\times_B \bar J)$ to any locally free resolution of $\iota_*F$ on $\bar J\times \bar J$ (\cite{MR910201} Lemma 1.9). Similarly, at the level of Chow groups, we have by \cite{MR1644323} Propsoition $17.4.2$, the isomorphism \begin{equation}\label{eq25}
        \cap [\bar J\times \bar J]: A^k(\bar J\times_B \bar J \xhookrightarrow{\iota}\bar J\times \bar J)\xrightarrow[]{\simeq} A_{2\dim \bar J-k}(\bar J\times_B \bar J).
    \end{equation}
    Under the isomorphisms \ref{eq24} and \ref{eq25}, the Adams operations with respect to the closed immersion $\iota: \bar J\times_B \bar J\hookrightarrow \bar J\times \bar J$ stand as \begin{equation}
        \psi^N : K_\bullet (\bar J\times_B \bar J) \rightarrow K_\bullet (\bar J\times_B \bar J)
    \end{equation}We denote the localized Chern character $\operatorname{ch}^{\bar J\times \bar J}_{\bar J\times_B \bar J}$ by $\widetilde{\operatorname{ch}}$ which under the isomorphisms $(\ref{eq24})$ and $(\ref{eq25})$ yields a morphism \begin{equation}
        \widetilde{\operatorname{ch}}: K_\bullet (\bar J\times_B \bar J)\rightarrow A_\bullet (\bar J\times_B \bar J).
    \end{equation}
    Note that \begin{equation*}
        \widetilde{\operatorname{ch}}(-)=\operatorname{td}(-p_1^*T_{\bar J}-p_2^*T_{\bar J})\cap \tau(-).
         \end{equation*}
         Moreover Equation  (\ref{eq23}) stands as \begin{equation}
        \label{eq28} \widetilde{\operatorname{ch}}_k(\psi^N(F_\bullet)) =N^k\widetilde{\operatorname{ch}}_k(F_\bullet).
    \end{equation}
    We shall use this property to prove certain vanishing results in Section $4$. 
    
\section{Fourier Vanishing and Construction of Motivic Perverse Filtrations} 
In this section, we construct a family of good pair of correspondences in the sense of Definition  \ref{def1.2}. We shall use Fulton-McPharson's $\tau(-)$ class. We refer to \cite{MR1644323} Chapter 18 or \cite{MR4877374} Section 2.3 for a review of the properties of $\tau$. Fix a rational number $\alpha\in \mathbb Q$. Set \begin{equation}
    \mathfrak F_\alpha := \operatorname{td}(-T_{\bar J\times_B \bar J})^\alpha \cap \tau (\mathcal P), \ \mathfrak F^{-1}_{\alpha}:= \operatorname{td}(-T_{\bar J \times_B \bar J})^{1-\alpha}\operatorname{td}(-\pi_2^*T_B)\cap \tau(\mathcal P^{-1}).
\end{equation}
Our goal is to show that this pair of correspondences $(\mathfrak F_{\alpha},\mathfrak F^{-1}_\alpha)$ is a good pair.
First of all, since $\bar J\times_B \bar J$ is an lci scheme, we let $T_{\bar J\times_B \bar J}$ denote the virtual tangent bundle. More explicitly, we have \begin{equation*}
    T_{\bar J\times_B \bar J}= {T_{\bar J\times \bar J}|}_{\bar J\times_B \bar J}-N_{\bar J\times_B \bar J| \bar J\times \bar J} = p_1^*T_{\bar J}+p_2^*T_{\bar J} - \pi_2^*T_B\in K_\bullet(\bar J\times_B \bar J).
\end{equation*}
Now note that \begin{equation*}
    \begin{aligned}
        \mathfrak F_\alpha \circ \mathfrak F^{-1}_\alpha & = {p_{13}}_* \delta_{\bar J}^!\left (\mathfrak F^{-1}_\alpha\times \mathfrak F_\alpha \right ) \\ & = {p_{13}}_* \delta_{\bar J}^!\left (\operatorname{td}(-T_{\bar J \times_B \bar J})^{1-\alpha}\operatorname{td}(-\pi_2^*T_B)\cap \tau(\mathcal P^{-1})\times \operatorname{td}(-T_{\bar J\times_B \bar J})^\alpha \cap \tau (\mathcal P)  \right ) \\ & ={p_{13}}_* \left ( p_{12}^*\operatorname{td}(-T_{\bar J\times_B \bar J})^{1-\alpha}p_{23}^*\operatorname{td}(-T_{\bar J\times_B \bar J})^\alpha\operatorname{td}(-\pi_3^*T_B)\cap \delta_{\bar J}^!(\tau(\mathcal P^{-1}\boxtimes\mathcal P)) \right ) \\ & = {p_{13}}_* \left ( p_{12}^*\operatorname{td}(-T_{\bar J\times_B \bar J})^{1-\alpha}p_{23}^*\operatorname{td}(-T_{\bar J\times_B \bar J})^\alpha\operatorname{td}(-\pi_3^*T_B)\operatorname{td}(p_2^*T_{\bar J})\cap \tau(\delta_{\bar J}^*(\mathcal P^{-1}\boxtimes\mathcal P)) \right ) \\ & = p_1^* \operatorname{td}(-T_{\bar J})^\alpha p_2^*\operatorname{td}(-T_{\bar J})^{1-\alpha}  \cap \tau (\mathcal P\circ \mathcal P^{-1} )  \\ & =  p_1^* \operatorname{td}(-T_{\bar J})^\alpha p_2^*\operatorname{td}(-T_{\bar J})^{1-\alpha}   \cap {\Delta_{\bar J/B}}_* \operatorname{td}(T_{\bar J}) = [\Delta_{\bar J/B}].
    \end{aligned}
\end{equation*}Similarly, one has \begin{equation*}
    \mathfrak F^{-1}_\alpha \circ \mathfrak F_\alpha= [\Delta_{\bar J/B}].
\end{equation*}Let $B'\subset B $ be a non-empty open sub-variety such that $\bar J_{B'}\rightarrow B'$ is an abelian scheme. It follows that there exists a non-empty open $U\subset B'$ such that the vector bundles ${T_B|}_U, {T_{\bar J}|}_{\pi^{-1}(U)}$ are trivial. In this case, $\mathcal P|_{\bar J_U \times_U \bar J_U} =\mathcal L$, the Poincare line bundle for the abelian scheme $\bar J_U \rightarrow U$ So we have \begin{equation*}{\mathfrak F_\alpha|}_U= \tau(\mathcal L) = \operatorname{ch}(\mathcal L)\cap [\bar J_U \times_U \bar J_U]\end{equation*}
where the second equality follows from the fact that $\bar J_U \times_U \bar J_U$ is non-singular. 

Thus we have established conditions $(1)$ and $(3)$ of Definition  \ref{def1.2}. The only thing left is Fourier Vanishing ( \ref{(FV)}), which we establish in the following section.
 \subsection{Fourier Vanishing}\label{sec3.1}
In this section we prove  (\ref{(FV)}) for the pair of algebraic cycles $\mathfrak F_\alpha, \mathfrak F^{-1}_\alpha$.  In fact we prove something slightly more general (see Corollary  \ref{cor3.1}) which we shall use later in Section \ref{sec3.3}. First, we note that if $q_1, q_2$ are the two projections $\bar J\times \bar J\rightarrow \bar J$, then \begin{equation}
    \mathcal T= (\mathcal P^{-1}\otimes p_1^*\mathcal K_1 \otimes p_2^*\mathcal K_2)\circ \left ((\iota_*\mathcal P)^{\otimes N}\otimes \bigotimes_{i=1}^{k_1}q_1^*\mathcal K_{i,1}^{\otimes N_{i,1}}\otimes \bigotimes_{i=1}^{k_2}q_2^*\mathcal K_{i,2}^{\otimes N_{i,2}}\textbf{} \right )
\end{equation}
is supported on a co-dimension $g$-subset of $\bar J\times_B \bar J$ by Proposition  \ref{prop2.7}. where $\mathcal K_1, \mathcal K_2, \mathcal K_{i,1}, \mathcal K_{i,2}$ are locally free sheaves that can be expressed in terms of an iterated extension via exact triangles of $J$-equivariant locally free sheaves (using Lemma \ref{lem2.9}). Let $S_d$ denote the symmetric group on $d$ symbols. Then we have a natural $ S_N$-action on $(\iota_*\mathcal P)^{\otimes N}$, $ S_{N_{i,1}}$-action on $ q_1^*\mathcal K_{i,1}^{\otimes N_{i,1}}$, $ S_{N_{i,2}}$-action on $ q_2^*\mathcal K_{i,2}^{\otimes N_{i,2}}$. 
Considering iso-typic components, we see that
\begin{equation}
    (\mathcal P^{-1}\otimes p_1^*\mathcal K_1 \otimes p_2^*\mathcal K_2)\circ \left (\bigwedge^N(\iota_*\mathcal P)\otimes \bigotimes_{i=1}^{k_1}\bigwedge^{N_{i,1}}q_1^*\mathcal K_{i,1}\otimes \bigotimes_{i=1}^{k_2}\bigwedge^{N_{i,2}}q_2^*\mathcal K_{i,2}\textbf{} \right )
\end{equation}is supported on a co-dimension $g$ subset of $\bar J\times_B \bar J$. In particular, using Adams' operation, one has 
\begin{equation*}
    \left(\mathcal P^{-1}\otimes \bigotimes_{i=1}^{d_1}\psi^{n_{i,1}}(p_1^*\Omega_{\bar J})\otimes \bigotimes_{i=1}^{d_2} \psi^{n_{i,2}}(p_2^*\Omega_{\bar J}) \right) \circ \left (\psi^N(\mathcal P) \otimes \bigotimes_{i=1}^{k_1} \psi^{N_{i,1}}(p_1^*\Omega_{\bar J}) \otimes \bigotimes_{i=1}^{k_2} \psi^{N_{i,2}}(p_2^*\Omega_{\bar J}) \right )
\end{equation*} 
seen as an object of $K_\bullet (\bar J\times_B \bar J)$ is supported on a co-dimension $g$-subset since Adams operations are polynomials in the exterior powers. Now note that for any $\mathcal A, \mathcal B \in K_{\bullet}(\bar J\times_B \bar J)$, we have
\begin{equation}\label{eq32}\begin{aligned}
    \tau(\mathcal A) \circ \widetilde{\operatorname{ch}}(\mathcal B) & = {p_{13}}_* \delta_{\bar J}^! \left ( \widetilde{\operatorname{ch}}(\mathcal B)\times \tau(\mathcal A)\right ) \\ & =  {p_{13}}_* \delta_{\bar J}^! \left (\operatorname{td}(-p_1^*T_{\bar J}-p_2^*T_{\bar J})\cap \tau (\mathcal B)\times \tau (\mathcal A) \right) \\ & =\operatorname{td}(-p_1^*T_{\bar J})\cap  \tau\left ({p_{13}}_*(\delta_J^*(\mathcal A\times \mathcal B))\right) \\ &= \operatorname{td}(-p_1^*T_{\bar J})\cap \tau(\mathcal A\circ \mathcal B).
\end{aligned}
\end{equation}
In Equation (\ref{eq32}), taking, \begin{equation*}\begin{aligned}
\mathcal A= \mathcal P^{-1}\otimes \bigotimes_{i=1}^{d_1}\psi^{n_{i,1}}(p_1^*\Omega_{\bar J})\otimes \bigotimes_{i=1}^{d_2} \psi^{n_{i,2}}(p_2^*\Omega_{\bar J})
\\ \mathcal B=\psi^N(\mathcal P) \otimes \bigotimes_{i=1}^{k_1} \psi^{N_{i,1}}(p_1^*\Omega_{\bar J}) \otimes \bigotimes_{i=1}^{k_2} \psi^{N_{i,2}}(p_2^*\Omega_{\bar J}), \end{aligned} \end{equation*}we get the following vanishing since $\mathcal A\circ B$ is supported on a co-dimension $g$ subset of $\bar J\times_B \bar J$ by the discussion above.\begin{equation}\label{eq33}
    \sum_{i+j=k}\tau_i(\mathcal A)\circ \widetilde{\operatorname{ch}}_{j+\dim B}(\mathcal B)=0\text{ if }k<2g.
\end{equation}Plugging everything in Equation (\ref{eq33}), we get \begin{gather}\label{eq34}
    \begin{aligned}
      & \underbrace{\sum_{i_\cdot}\sum_{j_\cdot}\sum_{e_\cdot}\sum_{f_\cdot}}_{\sum i+ \sum j+\sum e +\sum f=k} \left(
        \prod n_{\cdot,1}^{i_\cdot}\operatorname{ch}_{i_\cdot}(p_1^*\Omega_{\bar J})\prod n_{\cdot,2}^{j_\cdot}\operatorname{ch}_{j_\cdot}(p_2^*T_{\bar J})\cap \tau_a(\mathcal P^{-1} ) \right )  \\ & \circ\left( N^{b+\dim B}\prod N_{\dot,1}^{e_\cdot}\operatorname{ch}_{e_\cdot}(p_1^*\Omega_{\bar J})\prod N_{\cdot,2}^{f_\cdot}\operatorname{ch}_{f_\cdot}(p_2^*\Omega_{\bar J}) \cap \widetilde{ \operatorname{ch}}_{b+\dim B}(\mathcal{P}) \right )
       =  0 \text{ if } k< 2g.
    \end{aligned}
\end{gather}Since Equation (\ref{eq34}) is true for all $n_{\cdot,1}, n_{\cdot,2},N_{\cdot,1}, N_{\cdot,2}, N$'s we conclude
\begin{equation}
\begin{aligned}
   & \label{eq35}\prod_{i_\cdot}\operatorname{ch}_{i_\cdot}(p_1^*\Omega_{\bar J})\prod_{j_\cdot}\operatorname{ch}_{j_\cdot}(p_2^*\Omega_{\bar J})\cap \tau_k(\mathcal P^{-1})\circ \prod_{e_\cdot}\operatorname{ch}_{e_\cdot}(p_1^*\Omega_{\bar J})\prod_{f_\cdot}\operatorname{ch}_{f_\cdot}(p_2^*\Omega_{\bar J})\cap \widetilde{\operatorname{ch}}_{l+\dim B}(\mathcal P)=0 \\ &\text{if }\sum i +\sum j+\sum e+\sum f+k+l<2g. 
    \end{aligned}
\end{equation}In fact, twisting $\mathcal A$ and $\mathcal B$ with $\psi^{(-)}(\pi_2^*T_B)$'s we can conclude the vanishing \begin{equation}
\begin{aligned}
   & \label{eq36}\prod_{m_\cdot}\operatorname{ch}_{m_\cdot}(\pi_2^*T_B)\prod_{i_\cdot}\operatorname{ch}_{i_\cdot}(p_1^*\Omega_{\bar J})\prod_{j_\cdot}\operatorname{ch}_{j_\cdot}(p_2^*\Omega_{\bar J})\cap \tau_k(\mathcal P^{-1})\\ &\circ \prod_{n_\cdot}\operatorname{ch}_{n_\cdot}(\pi_2^*T_B)\prod_{e_\cdot}\operatorname{ch}_{e_\cdot}(p_1^*\Omega_{\bar J})\prod_{f_\cdot}\operatorname{ch}_{f_\cdot}(p_2^*\Omega_{\bar J})\cap \widetilde{\operatorname{ch}}_{l+\dim B}(\mathcal P)=0 \\ &\text{if }\sum m+\sum i +\sum j+\sum e+\sum f+\sum n+k+l<2g. 
    \end{aligned}
\end{equation}
We get the following corollary from the above computation. 
\begin{cor}\label{cor3.1}We have the vanishing \begin{equation}\label{eq37}
        \left(\mathfrak F_{\beta}^{-1}\right)_i\circ \left(\mathfrak F_\alpha\right)_j =0 \text{ if }i+j<2g 
    \end{equation}for all $\alpha, \beta \in \mathbb Q$.
\end{cor}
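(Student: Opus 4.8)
The plan is to deduce Corollary \ref{cor3.1} directly from the vanishing established in Equation (\ref{eq36}) by simply expanding the definitions of $\mathfrak F_\alpha$ and $\mathfrak F^{-1}_\beta$ and matching terms. First I would recall that
\[
\mathfrak F_\alpha = \operatorname{td}(-T_{\bar J\times_B \bar J})^\alpha \cap \tau(\mathcal P), \qquad \mathfrak F^{-1}_\beta = \operatorname{td}(-T_{\bar J\times_B \bar J})^{1-\beta}\operatorname{td}(-\pi_2^*T_B)\cap \tau(\mathcal P^{-1}),
\]
and use the identity $T_{\bar J\times_B \bar J} = p_1^*T_{\bar J} + p_2^*T_{\bar J} - \pi_2^*T_B$ in $K_\bullet(\bar J\times_B \bar J)$, so that the Todd-class prefactors are (up to the $\operatorname{td}(-\pi_2^*T_B)$ corrections) rational powers of $\operatorname{td}(p_1^*T_{\bar J})$, $\operatorname{td}(p_2^*T_{\bar J})$, $\operatorname{td}(\pi_2^*T_B)$. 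Each such power is a universal power series in the Chern classes $\operatorname{ch}_\bullet(p_1^*\Omega_{\bar J})$, $\operatorname{ch}_\bullet(p_2^*\Omega_{\bar J})$, $\operatorname{ch}_\bullet(\pi_2^*T_B)$ (via Newton's identities relating Chern characters and Chern classes, and the fact that Todd classes are expressible in Chern classes hence in Chern characters). Thus the degree-$i$ component $(\mathfrak F^{-1}_\beta)_i$ is a $\mathbb Q$-linear combination of terms of exactly the shape appearing on the first two lines of (\ref{eq36}), namely $\prod_{m_\cdot}\operatorname{ch}_{m_\cdot}(\pi_2^*T_B)\prod_{i_\cdot}\operatorname{ch}_{i_\cdot}(p_1^*\Omega_{\bar J})\prod_{j_\cdot}\operatorname{ch}_{j_\cdot}(p_2^*\Omega_{\bar J})\cap \tau_k(\mathcal P^{-1})$ with $\sum m + \sum i + \sum j + k = i$, and similarly $(\mathfrak F_\alpha)_j$ expands into terms $\prod_{n_\cdot}\operatorname{ch}_{n_\cdot}(\pi_2^*T_B)\prod_{e_\cdot}\operatorname{ch}_{e_\cdot}(p_1^*\Omega_{\bar J})\prod_{f_\cdot}\operatorname{ch}_{f_\cdot}(p_2^*\Omega_{\bar J})\cap \widetilde{\operatorname{ch}}_{l+\dim B}(\mathcal P)$ with $\sum n + \sum e + \sum f + (l + \dim B) = j + \dim B$, i.e.\ $\sum n + \sum e + \sum f + l = j$. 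Here one must be careful that $(\mathfrak F_\alpha)_j \in \operatorname{Corr}^{j-g}_B(\bar J,\bar J) = A_{\dim\bar J - (j-g)}(\bar J\times_B\bar J)$ corresponds, under the bivariant identification, to $\widetilde{\operatorname{ch}}_{(j-g)+\dim B}$ twisted appropriately; tracking this degree shift is the one bookkeeping point that needs genuine attention.

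The main step is then: since the composition $(\mathfrak F^{-1}_\beta)_i\circ(\mathfrak F_\alpha)_j$ is bilinear in the two factors, it expands as a finite $\mathbb Q$-linear combination of the composite expressions in (\ref{eq36}), where each summand has $\sum m + \sum i + \sum j + k$ coming from the first factor equal to $i$ and $\sum n + \sum e + \sum f + l$ coming from the second factor equal to $j$, so the total is $i + j$. When $i + j < 2g$ every single summand satisfies the hypothesis of (\ref{eq36}) and hence vanishes, giving $(\mathfrak F^{-1}_\beta)_i\circ(\mathfrak F_\alpha)_j = 0$. Note that $\alpha$ and $\beta$ enter only as exponents on Todd-class prefactors, which get absorbed into the universal-coefficient bookkeeping, so the conclusion holds for all $\alpha,\beta\in\mathbb Q$ simultaneously — this is exactly why (\ref{eq36}) was proved with the full collection of auxiliary $\operatorname{ch}_\bullet(\pi_2^*T_B)$, $\operatorname{ch}_\bullet(p_1^*\Omega_{\bar J})$, $\operatorname{ch}_\bullet(p_2^*\Omega_{\bar J})$ factors rather than just the bare $\tau_k(\mathcal P^{-1})\circ\widetilde{\operatorname{ch}}_{l+\dim B}(\mathcal P)$ vanishing.

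The hard part is not conceptual but organizational: one must check that the bilinear composition pairing used to define $\Gamma'\circ\Gamma$ is compatible with writing each factor as a sum of the monomial-in-Chern-characters-capped-with-$\tau$ expressions — i.e.\ that composition distributes over these decompositions and that the Todd prefactors $\operatorname{td}(-p_1^*T_{\bar J})^{\text{power}}$ etc.\ can be freely moved through the refined Gysin pullback $\delta^!_{\bar J}$ and push-forward $p_{13*}$ used in (\ref{eq1}). This is precisely the content of the projection-formula manipulation in Equation (\ref{eq32}), applied not to the single pair $(\mathcal A,\mathcal B)$ but term-by-term; the key observation making this legitimate is that all the twisting classes $\operatorname{ch}_\bullet(p_i^*\Omega_{\bar J})$, $\operatorname{ch}_\bullet(\pi_2^*T_B)$ are pulled back from $\bar J$ or $B$, so they commute with the bivariant operations exactly as the $\psi^N$-twists did in the derivation of (\ref{eq34})--(\ref{eq36}). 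Once this compatibility is recorded, the corollary follows by matching the degree count against the strict inequality $i+j<2g$, and I would expect the written proof to be only a few lines long.
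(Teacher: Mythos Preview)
Your proposal is correct and follows essentially the same approach as the paper's proof: expand the Todd-class prefactors in $\mathfrak F_\alpha$ and $\mathfrak F^{-1}_\beta$ as polynomials in $\operatorname{ch}_\bullet(p_1^*\Omega_{\bar J})$, $\operatorname{ch}_\bullet(p_2^*\Omega_{\bar J})$, $\operatorname{ch}_\bullet(\pi_2^*T_B)$, and observe that each resulting summand in the composition is exactly of the form governed by Equation~(\ref{eq36}), with total degree $i+j<2g$. The paper's proof is the two-line version of your argument; your additional paragraphs on degree bookkeeping and on the compatibility of the composition pairing with these decompositions are correct elaborations that the paper leaves implicit.
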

\begin{proof}
    We see that \begin{equation*}
        \begin{aligned}
               & \left(\mathfrak F_{\beta}^{-1}\right)_i\circ \left(\mathfrak F_\alpha\right)_j \\ & = \left ( \operatorname{td}(-p_1^*T_{\bar J})^{1-\beta}\operatorname{td}(-p_2^*T_{\bar J})^{1-\beta}\operatorname{td}(-\pi_2^*T_B)^\beta\cap \tau(\mathcal P^{-1})\right )_{i} \\ &\circ \left (\operatorname{td}(p_1^*T_{\bar J})^{1-\alpha}\operatorname{td}(p_2^*T_{\bar J})^{1-\alpha}\operatorname{td}(\pi_2^*T_B)^{\alpha}\cap \widetilde{\operatorname{ch}}(\mathcal P)\right )_j \\ &= 0 \text{ if } i+j<2g
        \end{aligned}
    \end{equation*} because it is a sum of expressions of the form as in Equation (\ref{eq36}) once we expand out the powers of the Todd classes as polynomials in $\operatorname{ch}_{(-)}$.
\end{proof}In particular, this shows $\mathfrak F_\alpha, \mathfrak F^{-1}_\alpha$ satisfy (\ref{(FV)}). which is what we wanted to prove. 
\subsection{Construction of the Motivic Perverse Filtrations}\label{sec3.2}
Since $\mathfrak F_\alpha, \mathfrak F_{\alpha}^{-1}$ satisfy the conditions of Definition  \ref{def1.2}, we see that they form a good pair of correspondences and hence induce a motivic perverse filtration by Proposition  \ref{prop1.4}. We denote this motivic perverse filtration by $P^{(\alpha)}$. 

Given a motivic perverse filtration for $X\rightarrow B$, say  $P_k\mathfrak h(X)= (X, \mathfrak p_k,0)$, we get an induced filtration on $A_\bullet (X)$ given by \begin{equation}\label{eq38}
    P_k A_\bullet (X) ={\mathfrak p_k}_* A_\bullet (X).
\end{equation}
Since we have constructed a family of motivic perverse filtrations for the compactified Jacobian fibration $\bar J\rightarrow B$, it is natural to ask if the perverse filtrations induced on $A_\bullet(\bar J)$ are the same\footnote{Note that we already know this at the level of cohomology thanks to the homlogical realization property in Definition \ref{def1.1}.}. 

The main result of this section is that the perverse filtrations induced at the level of Chow groups is the same for different $P^{(\alpha)}$'s. 

For the perverse filtration $P^{(\alpha)}$, we denote the corresponding projectors by $\mathfrak p^{(\alpha)}_k$ and $\mathfrak q^{(\alpha)}_{k+1}$. 
\begin{prop}\label{prop3.2}
For $\alpha, \beta \in \mathbb Q$, we have \begin{equation}\label{eq39}
    P^{(\alpha)}_kA_\bullet (\bar J)= P_{k}^{(\beta)}A_\bullet(\bar J).
\end{equation}    
\end{prop}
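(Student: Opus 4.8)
The plan is to compare the two filtrations $P^{(\alpha)}_k A_\bullet(\bar J)$ and $P^{(\beta)}_k A_\bullet(\bar J)$ by writing down an explicit correspondence that intertwines the projectors $\mathfrak p^{(\alpha)}_k$ and $\mathfrak p^{(\beta)}_k$ up to terms that shift into lower pieces of the filtration. The key observation is that $\mathfrak F_\beta$ and $\mathfrak F_\alpha$ differ only by a multiplicative factor of $\operatorname{td}(-T_{\bar J\times_B\bar J})^{\beta-\alpha}$ (and similarly for the inverses, with the Todd of $\pi_2^* T_B$ bookkeeping), and this factor is a sum of classes pulled back from the two $\bar J$-factors via $p_1^*,p_2^*$. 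So I would first record the precise relation $\mathfrak F_\beta = \operatorname{td}(-p_1^*T_{\bar J})^{\beta-\alpha}\operatorname{td}(-p_2^*T_{\bar J})^{\beta-\alpha}\operatorname{td}(\pi_2^*T_B)^{\beta-\alpha}\cap \mathfrak F_\alpha$, and the analogous one for $\mathfrak F^{-1}_\beta$ versus $\mathfrak F^{-1}_\alpha$, noting that each Todd power expands as $1 + (\text{positive-degree classes})$.

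Next I would translate this into a statement about the degree pieces: $(\mathfrak F_\beta)_j = (\mathfrak F_\alpha)_j + \sum_{j' < j}(\text{stuff from factors})\cdot(\mathfrak F_\alpha)_{j'}$, and likewise $(\mathfrak F^{-1}_\beta)_i = (\mathfrak F^{-1}_\alpha)_i + \sum_{i'<i}(\ldots)\cdot(\mathfrak F^{-1}_\alpha)_{i'}$. Now I plug these into the defining formula $\mathfrak p^{(\beta)}_k = \sum_{j\le k}(\mathfrak F_\beta)_j\circ(\mathfrak F^{-1}_\beta)_{2g-j}$ and expand. The cross terms all involve compositions of the form $(\mathfrak F^{-1}_\alpha)_{i'}\circ(\text{Todd class on factors})\circ(\mathfrak F_\alpha)_{j'}$; the point is to show, using the generalized Fourier-vanishing Corollary \ref{cor3.1} together with Equation (\ref{eq36}) — which allows arbitrary products of $\operatorname{ch}_\bullet(p_1^*\Omega_{\bar J})$, $\operatorname{ch}_\bullet(p_2^*\Omega_{\bar J})$, $\operatorname{ch}_\bullet(\pi_2^*T_B)$ inserted between $\tau_k(\mathcal P^{-1})$ and $\widetilde{\operatorname{ch}}_{l}(\mathcal P)$ — that such a composition vanishes unless $i'+j'+(\text{degrees of inserted classes})\ge 2g$. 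Combined with the constraint $j'\le j\le k$ and $i' \le 2g-j$, the surviving cross terms reassemble into something lying in $P^{(\alpha)}_k A_\bullet(\bar J)$; more precisely I expect to show $\mathfrak p^{(\beta)}_k \circ \mathfrak p^{(\alpha)}_k$-type identities, i.e. that $\operatorname{image}(\mathfrak p^{(\beta)}_k)_* \subseteq \operatorname{image}(\mathfrak p^{(\alpha)}_k)_*$ on Chow groups, and then conclude equality by symmetry in $\alpha,\beta$.

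The cleanest route is probably not to manipulate projectors directly but to show directly that for $z\in A_\bullet(\bar J)$ with $(\mathfrak F^{-1}_\alpha)_i(z) = 0$ for all $i < 2g-k$ (the "$P^{(\alpha)}_k$" characterization of the filtration via the inverse transform), the same vanishing $(\mathfrak F^{-1}_\beta)_i(z)=0$ holds for $i<2g-k$; since $(\mathfrak F^{-1}_\beta)_i$ is a sum of $(\text{Todd classes})\cap (\mathfrak F^{-1}_\alpha)_{i'}$ with $i'\le i$, this is immediate from $i'\le i < 2g-k$. This gives one inclusion of the filtrations essentially for free once one has set up the right characterization, and symmetry finishes it. The main obstacle I anticipate is bookkeeping: making sure the "inverse-transform" characterization of $P^{(\alpha)}_k A_\bullet(\bar J)$ is correct (this should follow from $\mathfrak p^{(\alpha)}_k = [\Delta] - \mathfrak q^{(\alpha)}_{k+1}$ and the formula for $\mathfrak q^{(\alpha)}_{k+1}$ in terms of $(\mathfrak F_\alpha)_i\circ(\mathfrak F^{-1}_\alpha)_{2g-i}$ for $i\ge k+1$, using \ref{(FV)} to simplify $\mathfrak q^{(\alpha)}_{k+1}(z)$), and keeping careful track of which factor the Todd classes live on so that Corollary \ref{cor3.1} / Equation (\ref{eq36}) applies verbatim. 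None of this should require genuinely new input beyond what Section \ref{sec3.1} already provides; it is the robustness of the Fourier-vanishing statement under insertion of tangent-bundle classes that does all the work.
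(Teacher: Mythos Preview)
Your proposal identifies the right ingredient (Corollary~\ref{cor3.1}) but takes a circuitous route to it, and your ``cleanest route'' contains a genuine gap.

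The paper's proof is a direct three-line computation: to get $P^{(\alpha)}_k A_\bullet(\bar J) \subset P^{(\beta)}_k A_\bullet(\bar J)$ it suffices to show $\mathfrak q^{(\beta)}_{k+1} \circ \mathfrak p^{(\alpha)}_k = 0$. Expanding,
\[
\mathfrak q^{(\beta)}_{k+1} \circ \mathfrak p^{(\alpha)}_k \;=\; \sum_{i \ge k+1}\,\sum_{j \le k} (\mathfrak F_\beta)_i \circ (\mathfrak F^{-1}_\beta)_{2g-i} \circ (\mathfrak F_\alpha)_j \circ (\mathfrak F^{-1}_\alpha)_{2g-j},
\]
and since $(2g-i)+j \le (2g-k-1)+k < 2g$, Corollary~\ref{cor3.1} kills the middle composition $(\mathfrak F^{-1}_\beta)_{2g-i}\circ(\mathfrak F_\alpha)_j$ directly. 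No Todd expansions are needed; the mixed-$(\alpha,\beta)$ Fourier vanishing is already packaged for exactly this purpose.

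The gap in your second route is the claim that $(\mathfrak F^{-1}_\beta)_i(z)=0$ follows ``immediately'' from $(\mathfrak F^{-1}_\alpha)_{i'}(z)=0$ for $i'\le i$ via the Todd decomposition. The Todd factor $\operatorname{td}(-T_{\bar J\times_B\bar J})^{\alpha-\beta}$ contains $p_1^*$-pieces (recall $T_{\bar J\times_B\bar J}=p_1^*T_{\bar J}+p_2^*T_{\bar J}-\pi_2^*T_B$). When you apply the correspondence to $z$, these $p_1^*$-pieces multiply into $z$, so what you actually get are terms of the form $b\cdot(\mathfrak F^{-1}_\alpha)_{i'}(z\cdot a)$ for various $a\in A^{>0}(\bar J)$, not multiples of $(\mathfrak F^{-1}_\alpha)_{i'}(z)$. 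Knowing the latter vanish does not give you the former; that would require knowing $z\cdot a\in P^{(\alpha)}_{k+\deg a}$, which presupposes both multiplicativity and perversity bounds for the tangent-bundle classes $a$ --- neither of which is available at this point in the paper. Your first approach would eventually work, but it amounts to re-deriving Corollary~\ref{cor3.1} inside the argument rather than simply invoking it on the composite $\mathfrak q^{(\beta)}_{k+1}\circ\mathfrak p^{(\alpha)}_k$.
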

\begin{proof}
    It is enough to show that $P^{(\alpha)}_kA_\bullet (\bar J)\subset P^{(\beta)}_k A_\bullet (\bar J)$ for any $\alpha,\beta\in \mathbb Q$ which is equivalent to showing\begin{equation}
        \mathfrak q^{(\beta)}_{k+1}\circ \mathfrak{p}^{(\alpha)}_k=0. 
    \end{equation}
    To this end, note that we have \begin{equation*}
        \begin{aligned}
            &\mathfrak q^{(\beta)}_{k+1}\circ \mathfrak p_{k}^{(\alpha)}  \\ & = \left ( \sum_{i\geq k+1}{(\mathfrak F_\beta)}_i\circ (\mathfrak F^{-1}_{\beta})_{2g-i} \right ) \circ \left ( \sum_{j\leq k}{(\mathfrak F_\alpha)}_j\circ (\mathfrak F^{-1}_{\alpha})_{2g-j}\right ) \\ &= \sum_{i\geq k+1}\sum_{j\leq k} {(\mathfrak F_\beta)}_i\circ (\mathfrak F^{-1}_{\beta})_{2g-i} \circ {(\mathfrak F_\alpha)}_j\circ (\mathfrak F^{-1}_{\alpha})_{2g-j}\\ & = 0 \text{ by Corollary }\ref{cor3.1},
        \end{aligned}
    \end{equation*}which completes the proof.
\end{proof}
Proposition \ref{prop3.2} shows that each motivic perverse filtration $\left \{P^{(\alpha)}_k \mathfrak h(\bar J)\right\}_k$ after specializing to Chow, induces the same filtration at the level of Chow groups which we denote by $\left \{P_k A_\bullet (\bar J)\right \}_k$.

We end this section with a corollary that indicates how Fourier transform changes the perverse filtration at the level of Chow groups/cohomology. We use this result in Example \ref{ex4.8}.  
\begin{cor}\label{fourier-perverse-codimension}
    For every $\alpha\in \mathbb Q$, we have \begin{equation*}
        \mathfrak F_\alpha^{-1} \left( P_k A^d(\bar J)\right )\subset A^{\geq d+g-k}(\bar J)
    \end{equation*}  and similarly \begin{equation*}
         \mathfrak F_\alpha^{-1} \left( P_k H^d(\bar J; \mathbb Q)\right )\subset H^{\geq d+2g-2k}(\bar J;\mathbb Q).
    \end{equation*} 
\end{cor}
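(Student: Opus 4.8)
The plan is to establish the Chow-group statement first and then transfer it to cohomology via the Corti--Hanamura realization, since the codimension shift there becomes a degree shift with the factor of $2$. To begin, I would unwind the definitions: for $z \in P_k A^d(\bar J)$ we have $z = \mathfrak p_k^{(\alpha)}(z')$ for some $z'$, so $\mathfrak F_\alpha^{-1}(z) = \mathfrak F_\alpha^{-1} \circ \mathfrak p_k^{(\alpha)}(z')$. Expanding $\mathfrak p_k^{(\alpha)} = \sum_{i \le k} (\mathfrak F_\alpha)_i \circ (\mathfrak F_\alpha^{-1})_{2g-i}$ and pre-composing with $\mathfrak F_\alpha^{-1} = \sum_j (\mathfrak F_\alpha^{-1})_j$, I would observe that $\sum_j (\mathfrak F_\alpha^{-1})_j \circ (\mathfrak F_\alpha)_i$ telescopes: by Corollary \ref{cor3.1} (the (FV) vanishing), the terms with $i + j < 2g$ vanish, so only $j \ge 2g - i$ survive, and combined with $(\mathfrak F_\alpha^{-1}) \circ (\mathfrak F_\alpha) = [\Delta_{\bar J/B}]$ one collapses the sum to show $\mathfrak F_\alpha^{-1} \circ \mathfrak p_k^{(\alpha)} = (\mathfrak F_\alpha^{-1})_{\ge 2g-k} \circ [\Delta]$, i.e. the composition lands in the span of the graded pieces $(\mathfrak F_\alpha^{-1})_m$ with $m \ge 2g - k$.

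Next I would track degrees. The component $(\mathfrak F_\alpha^{-1})_m$, viewed as a correspondence, has degree $m - g$, so it sends $A^d(\bar J)$ into $A^{d + (m-g)}(\bar J) = A^{d + m - g}(\bar J)$. Since the surviving components all have $m \ge 2g - k$, the image lies in $A^{\ge d + (2g-k) - g}(\bar J) = A^{\ge d + g - k}(\bar J)$, which is the desired bound. The only subtlety is bookkeeping the degree conventions: one must be careful that $\mathfrak F_\alpha^{-1} \in \operatorname{Corr}_B^\bullet(\bar J, \bar J)$ and that $(\mathfrak F_\alpha^{-1})_m \in A^m(\bar J \times_B \bar J) = \operatorname{Corr}_B^{m-g}(\bar J, \bar J)$ acts on Chow with the stated codimension shift — this matches the indexing already fixed in Definition \ref{def1.2} for $\mathfrak Z$, so no new convention is needed.

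For the cohomological statement, I would run the same argument after applying the Corti--Hanamura realization functor: the projectors $\mathfrak p_k^{(\alpha)}$ realize to the perverse truncation (Proposition \ref{prop1.4}), the relation $\mathfrak F_\alpha^{-1} \circ \mathfrak F_\alpha = [\Delta]$ and the (FV) vanishing of Corollary \ref{cor3.1} both pass to cohomology, so the same telescoping shows $\mathfrak F_\alpha^{-1}$ restricted to $P_k H^d$ factors through the sum of the graded pieces $(\mathfrak F_\alpha^{-1})_m$ with $m \ge 2g - k$. Here the correspondence $(\mathfrak F_\alpha^{-1})_m \in A^m(\bar J \times_B \bar J)$ acts on $H^d(\bar J; \mathbb Q)$ by raising degree by $2(m - g)$ (a codimension-$(m-g)$ relative correspondence shifts cohomological degree by twice that), so the image lies in $H^{\ge d + 2(2g-k-g)}(\bar J; \mathbb Q) = H^{\ge d + 2g - 2k}(\bar J; \mathbb Q)$.

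The main obstacle I anticipate is purely notational rather than conceptual: making the telescoping collapse completely rigorous requires carefully writing $\mathfrak F_\alpha^{-1} \circ \mathfrak p_k^{(\alpha)} = \mathfrak F_\alpha^{-1} \circ \bigl([\Delta_{\bar J/B}] - \mathfrak q_{k+1}^{(\alpha)}\bigr) = \mathfrak F_\alpha^{-1} - \mathfrak F_\alpha^{-1}\circ \mathfrak q_{k+1}^{(\alpha)}$ and then re-expanding both terms graded-piece by graded-piece to see that the low-degree components of $\mathfrak F_\alpha^{-1}$ (those with $m < 2g - k$) cancel exactly against the corresponding contributions from $\mathfrak F_\alpha^{-1}\circ \mathfrak q_{k+1}^{(\alpha)}$, using Corollary \ref{cor3.1} and Equation \eqref{eq3} applied to the pair $(\mathfrak F_\alpha, \mathfrak F_\alpha^{-1})$. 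Everything else is a routine degree count; there is no geometric input needed beyond what is already in Section \ref{sec3.1}.
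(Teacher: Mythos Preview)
Your approach is correct and coincides with the paper's: both use the (FV) vanishing of Corollary~\ref{cor3.1} to show that in $\mathfrak F_\alpha^{-1}\circ\mathfrak p_k^{(\alpha)}$ only the components $(\mathfrak F_\alpha^{-1})_j$ with $j\ge 2g-k$ survive, then read off the codimension/degree shift. One small correction: the identity you write, $\mathfrak F_\alpha^{-1}\circ\mathfrak p_k^{(\alpha)} = (\mathfrak F_\alpha^{-1})_{\ge 2g-k}\circ[\Delta_{\bar J/B}]$, is not literally true as an equality of correspondences; the paper instead records the cleaner identity $\mathfrak F_\alpha^{-1}\circ\mathfrak p_k^{(\alpha)} = (\mathfrak F_\alpha^{-1})_{\ge 2g-k}\circ\mathfrak p_k^{(\alpha)}$, which follows in one line from (FV) without any detour through $\mathfrak q_{k+1}^{(\alpha)}$ and already suffices for the degree bound.
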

\begin{proof} The proof follows from an easy computation :
  \begin{equation*}\begin{split}
&\mathfrak F_{\alpha}^{-1}\circ \mathfrak p_k^{(\alpha)}= \sum_{i}\sum_{j\leq k}{(\mathfrak F_{\alpha}^{-1})}_i\circ {(\mathfrak F_\alpha)}_j \circ {(\mathfrak F_\alpha)}_{2g-j}^{-1}= {(\mathfrak F_\alpha )}_{\geq 2g-k}^{-1} \circ \mathfrak p^{(\alpha)}_k \\  \implies & \mathfrak F_\alpha^{-1}\left (P_kA^d(\bar J) \right ) \subset \mathfrak F^{-1}_{\geq 2g-k}\left ( A^d(\bar J)\right )\subset A^{\geq d+g-k}(\bar J).\end{split}
\end{equation*}The proof for cohomology is analogous. 
  
\end{proof}
\subsection{Multiplicativity of the Motivic Perverse Filtrations}\label{sec3.3}
In this section we raise the question of multiplicativity of the perverse filtration $P^{(\alpha)}$. We recall the notations introduced in Section \ref{sec1.1}. From Definition \ref{def1.1} we see that it is equivalent to the vanishing of the expression \begin{equation}\label{eq41}
    \mathfrak q^{(\alpha)}_{k+l+1}\circ [\Delta^{sm}_{\bar J/B}] \circ (\mathfrak p^{(\alpha)}_k\times \mathfrak p^{(\alpha)}_l)=0 \ \forall \ k,l.
\end{equation}
For notational convenience, we set $\mathfrak F=\mathfrak F_\alpha$ when $\alpha=1$ and similarly $\mathfrak F^{-1}$. It has been shown in \cite{MR4877374} (see Lemma $2.8$ and the discussion thereafter) that there exists \begin{equation} \label{eq42}\mathfrak C \in \operatorname{Corr}^{\geq -g}_B(\bar J, \bar J; \bar J)\end{equation} such that \begin{equation}\label{eq43}
 \mathfrak F \circ \mathfrak C \circ (\mathfrak F^{-1}\times \mathfrak F^{-1})=  [\Delta^{sm}_{\bar J/B}]\in \operatorname{Corr}^0_B (\bar J, \bar J; \bar J).
\end{equation}
\begin{prop}\label{prop3.3}
    The motivic perverse filtration $P^{(\alpha)}_k$ is multiplicative for every $\alpha$.
\end{prop}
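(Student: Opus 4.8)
The plan is to reduce multiplicativity of $P^{(\alpha)}$ to the case $\alpha = 1$, and then use the factorization of the small diagonal through the Fourier transform supplied by Equation (\ref{eq43}), together with the same Fourier-vanishing machinery used in Proposition \ref{prop3.2}. First I would dispose of the dependence on $\alpha$: the passage from $\mathfrak F_\alpha$ to $\mathfrak F_1 = \mathfrak F$ amounts to capping with powers of $\operatorname{td}(\pm T_{\bar J})$ pulled back from the two factors, i.e. $(\mathfrak F_\alpha)_j$ and $(\mathfrak F)_j$ differ by multiplication by a polynomial in $\operatorname{ch}_{(-)}(p_1^*T_{\bar J})$ and $\operatorname{ch}_{(-)}(p_2^*T_{\bar J})$; since these factors are $J$-equivariant and of positive degree, they do not lower the "Fourier degree" needed to invoke Corollary \ref{cor3.1}. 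So it suffices to prove $\mathfrak q_{k+l+1}\circ [\Delta^{sm}_{\bar J/B}]\circ(\mathfrak p_k\times \mathfrak p_l)=0$ for the projectors $\mathfrak p_k, \mathfrak q_{k+1}$ associated to $(\mathfrak F, \mathfrak F^{-1})$, and then transport the conclusion back using Proposition \ref{prop3.2} (which already identifies the induced Chow filtrations) — or, better, rerun the argument directly for general $\alpha$ since the vanishing input Corollary \ref{cor3.1} is stated for all $\alpha, \beta$.

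The heart of the argument is as follows. Using Equation (\ref{eq43}), write $[\Delta^{sm}_{\bar J/B}] = \mathfrak F\circ \mathfrak C\circ(\mathfrak F^{-1}\times \mathfrak F^{-1})$, so that
\begin{equation*}
\mathfrak q_{k+l+1}\circ[\Delta^{sm}_{\bar J/B}]\circ(\mathfrak p_k\times \mathfrak p_l) = \mathfrak q_{k+l+1}\circ \mathfrak F\circ \mathfrak C\circ\bigl((\mathfrak F^{-1}\circ \mathfrak p_k)\times(\mathfrak F^{-1}\circ \mathfrak p_l)\bigr).
\end{equation*}
Now I would expand each factor in Fourier degrees. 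On the left, $\mathfrak q_{k+l+1}\circ \mathfrak F = \sum_{i\geq k+l+1}(\mathfrak F)_i\circ(\mathfrak F^{-1})_{2g-i}\circ \mathfrak F$, and since $\sum_i (\mathfrak F^{-1})_{2g-i}\circ(\mathfrak F)_? $ collapses by the Fourier-vanishing relation (as in the computation in Corollary \ref{fourier-perverse-codimension}), this simplifies to $(\mathfrak F)_{\geq k+l+1}\circ(\text{something})$; concretely $\mathfrak q_{k+l+1}\circ \mathfrak F = (\mathfrak F)_{\geq k+l+1}$ after the collapse. On the right, by the same collapse identity applied to $\mathfrak p_k$ we get $\mathfrak F^{-1}\circ \mathfrak p_k = (\mathfrak F^{-1})_{\geq 2g-k}\circ \mathfrak p_k$, and similarly for $l$. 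So the composite becomes a sum of terms
\begin{equation*}
(\mathfrak F)_a\circ \mathfrak C_c\circ\bigl((\mathfrak F^{-1})_{a_1}\times(\mathfrak F^{-1})_{a_2}\bigr)\circ(\mathfrak p_k\times \mathfrak p_l)
\end{equation*}
with $a\geq k+l+1$, $a_1\geq 2g-k$, $a_2\geq 2g-l$, and $c\geq -g$ (the degree bound (\ref{eq42}) on $\mathfrak C$). Counting total Fourier degree, the "convolution-type" vanishing — the analogue of Corollary \ref{cor3.1} for the three-pointed correspondence $\mathfrak C$, which should follow from the codimension bound of Proposition \ref{prop2.6}/\ref{prop2.7} exactly as Corollary \ref{cor3.1} followed from Proposition \ref{prop2.7} — kicks in when $a_1 + a_2 + c + a < 2g + \text{(appropriate shift)}$; plugging in the bounds gives $a_1 + a_2 + c \geq 4g - k - l - g = 3g - k - l$, while $a$ must be $\leq 2g$, forcing $a \leq k+l$, contradicting $a \geq k+l+1$. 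Hence every surviving term vanishes.

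The main obstacle I anticipate is formulating and proving the precise three-pointed Fourier-vanishing statement for $\mathfrak C$ — i.e. that $\mathfrak C_c\circ((\mathfrak F^{-1})_{a_1}\times(\mathfrak F^{-1})_{a_2})$ composed appropriately vanishes outside a codimension-$g$ locus, with the correct degree bookkeeping. This requires identifying $\mathfrak C$ concretely enough (from \cite{MR4877374} Lemma 2.8) to see that its kernel is built from $\iota_*\mathcal P$'s and $J$-equivariant bundles, so that Proposition \ref{prop2.7} (in the generality of the Remark following it, allowing wedge powers of $T_{\bar J}$ and $T_B$-twists) applies; then the degree-counting Adams-operation argument of Section \ref{sec3.1} upgrades the codimension bound to the Fourier-degree vanishing. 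A secondary point to be careful about is the exact index shift in the vanishing range (the $\dim B$ shifts that appear throughout Section \ref{sec2.2} and \ref{sec3.1}), so that the inequality $a_1 + a_2 + c + a < 2g$ — or whatever the correct threshold is — is matched precisely against $a\geq k+l+1$, $a_1 \geq 2g-k$, $a_2\geq 2g-l$; I would track these shifts by comparing directly with the proof of Corollary \ref{cor3.1}. Once that three-pointed vanishing is in hand, the proof is a short degree count as above, and the reduction to general $\alpha$ is immediate from Corollary \ref{cor3.1}.
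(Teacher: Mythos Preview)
Your setup is exactly right and matches the paper: factor the small diagonal via Equation~(\ref{eq43}), then use Corollary~\ref{cor3.1} to collapse $\mathfrak q^{(\alpha)}_{k+l+1}\circ \mathfrak F$ to $\mathfrak F_{\geq k+l+1}$ and $\mathfrak F^{-1}\circ \mathfrak p^{(\alpha)}_k$ to $\mathfrak F^{-1}_{\geq 2g-k}$ (and likewise for $l$). The paper does precisely this, and moreover does it directly for arbitrary $\alpha$ since Corollary~\ref{cor3.1} is stated for mixed $\alpha,\beta$ --- so your reduction to $\alpha=1$ is unnecessary but harmless.

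Where you go astray is in the final step. You are searching for a ``three-pointed Fourier-vanishing'' analogue of Corollary~\ref{cor3.1} for $\mathfrak C$, and you flag this as the main obstacle. The paper does not need any such statement. Instead, the argument finishes by a pure \emph{degree count in the correspondence grading} $\operatorname{Corr}^\bullet_B$. After the truncations, the composite
\[
\mathfrak F_{\geq k+l+1}\circ \mathfrak C\circ\bigl(\mathfrak F^{-1}_{\geq 2g-k}\times \mathfrak F^{-1}_{\geq 2g-l}\bigr)
\]
lies in $\operatorname{Corr}_B^{\geq (k+l+1-g)+(-g)+(g-k)+(g-l)}(\bar J,\bar J;\bar J)=\operatorname{Corr}_B^{\geq 1}(\bar J,\bar J;\bar J)$, using only the degree bound $\mathfrak C\in\operatorname{Corr}_B^{\geq -g}$ from (\ref{eq42}) and the fact that $\mathfrak F_j, \mathfrak F^{-1}_j\in\operatorname{Corr}_B^{j-g}$. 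Sandwiching by the degree-zero projectors $\mathfrak q^{(\alpha)}_{k+l+1}$ and $\mathfrak p^{(\alpha)}_k\times\mathfrak p^{(\alpha)}_l$ does not change this. But the left-hand side $\mathfrak q^{(\alpha)}_{k+l+1}\circ[\Delta^{sm}_{\bar J/B}]\circ(\mathfrak p^{(\alpha)}_k\times\mathfrak p^{(\alpha)}_l)$ is manifestly a degree-$0$ bi-correspondence, so it must vanish. No support estimate beyond what is already packaged into (\ref{eq42}) is required; in particular you never need to unpack $\mathfrak C$ or invoke Proposition~\ref{prop2.6}/\ref{prop2.7} again.
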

\begin{proof}
    Since this is essentially a vanishing result, we begin by expanding \begin{equation}
        \begin{aligned}\label{eq44}
            &  \mathfrak q^{(\alpha)}_{k+l+1}\circ [\Delta^{sm}_{\bar J/B}] \circ (\mathfrak p^{(\alpha)}_k\times \mathfrak p^{(\alpha)}_l) \\ & =\mathfrak q^{(\alpha)}_{k+l+1} \circ \mathfrak F \circ \mathfrak C \circ (\mathfrak F^{-1}\circ \mathfrak p^{(\alpha)}_k \times \mathfrak F^{-1}\circ \mathfrak p_l^{(\alpha)})\\ & = \left ( \sum_{i_3\geq k+l+1}{(\mathfrak F_\alpha)}_{i_3}\circ {(\mathfrak F^{-1}_\alpha)}_{2g-i_3}\right )\circ \left (\sum_{j_3}\mathfrak F_{j_3} \right )    \circ \mathfrak C \\ &\circ\left (\left ( \sum_{j_1}\mathfrak F^{-1}_{j_1}\right ) \circ \left ( \sum_{i_1\leq k}{(\mathfrak F_\alpha)}_{i_1}\circ {(\mathfrak F^{-1}_\alpha)}_{2g-i_1} \right )\times \left (\sum_{j_2}\mathfrak F^{-1}_{j_2}\right )\circ \left ( \sum_{i_2\leq l}{(\mathfrak F_\alpha)}_{i_2}\circ {(\mathfrak F^{-1}_\alpha)}_{2g-i_2} \right )  \right ) \\
            & = \left ( \sum_{i_3\geq k+l+1}{(\mathfrak F_\alpha)}_{i_3}\circ {(\mathfrak F^{-1}_\alpha)}_{2g-i_3}\right )\circ \left (\sum_{j_3\geq k+l+1}\mathfrak F_{j_3} \right )    \circ \mathfrak C \\ &\circ\left (\left ( \sum_{j_1\geq 2g-k}\mathfrak F^{-1}_{j_1}\right ) \circ \left ( \sum_{i_1\leq k}{(\mathfrak F_\alpha)}_{i_1}\circ {(\mathfrak F^{-1}_\alpha)}_{2g-i_1} \right )\times \left (\sum_{j_2\geq 2g-l}\mathfrak F^{-1}_{j_2}\right )\circ \left ( \sum_{i_2\leq l}{(\mathfrak F_\alpha)}_{i_2}\circ {(\mathfrak F^{-1}_\alpha)}_{2g-i_2} \right )  \right ) \\ & \text{by Corollary }\ref{cor3.1} \\ 
            & = \mathfrak q^{(\alpha)}_{k+l+1}\circ \left (\sum_{j_3\geq k+l+1}\mathfrak F_{j_3} \right )    \circ \mathfrak C \circ\left (\left ( \sum_{j_1\geq 2g-k}\mathfrak F^{-1}_{j_1}\right ) \circ \mathfrak p_k^{(\alpha)}\times \left (\sum_{j_2\geq 2g-l}\mathfrak F^{-1}_{j_2}\right )\circ \mathfrak p_{(\alpha)}^{(\alpha)}  \right ).
        \end{aligned}
    \end{equation} Note that \begin{equation}\label{eq45}
    \begin{aligned}
        \sum_{j_1\geq 2g-k}\mathfrak F^{-1}_{j_1}\in \operatorname{Corr}_B^{\geq g-k}(\bar J,\bar J), & \sum_{j_2\geq 2g-l}\mathfrak F^{-1}_{j_2}\in \operatorname{Corr}^{\geq g-l}_B (\bar J,\bar J), & \sum_{j_3\geq k+l+1}\mathfrak F_{j_3} \in \operatorname{Corr}^{\geq k+l+1-g}(\bar J,\bar J).
       \end{aligned}
    \end{equation}
From (\ref{eq42}) and \eqref{eq45}, we get that the sum in \eqref{eq44} $\in \operatorname{Corr}^{\geq g-k +g-l -g+k+l+1-g }_B(\bar J,\bar J;\bar J)=\operatorname{Corr}_B^{\geq 1}(\bar J,\bar J;\bar J)$. But since $\mathfrak q_{k+l+1}^{(\alpha)}, \mathfrak p^{(\alpha)}_k, \mathfrak p^{(\alpha)}_l$ are degree $0$ correspondences and $\Delta^{sm}_{\bar J/B}$ is a degree $0$ bi-correspondence, one has $$\mathfrak q^{(\alpha)}_{k+l+1}\circ [\Delta^{sm}_{\bar J/B}] \circ \left(\mathfrak p^{(\alpha)}_k\times \mathfrak p^{(\alpha)}_l\right)\in \operatorname{Corr}^0_B(\bar J, \bar J; \bar J),$$which then shows that  $$\mathfrak q^{(\alpha)}_{k+l+1}\circ [\Delta^{sm}_{\bar J/B}] \circ \left(\mathfrak p^{(\alpha)}_k\times \mathfrak p^{(\alpha)}_l\right)=0.$$This completes the proof.  
\end{proof}
\begin{rmk}
    Note that the proof of Proposition \ref{prop3.3}, with appropriate modifications, gives us the vanishing \begin{equation}
        \label{eq46} \mathfrak q_{k+l+1}^{(\alpha)}\circ [\Delta^{sm}_{\bar J/B}]\circ \left ( \mathfrak p_k^{(\beta)}\times \mathfrak p_l^{(\gamma)} \right )=0 \ \forall \ k,l, \ \forall \ \alpha,\beta, \gamma.
    \end{equation}
\end{rmk}
\section{Perversity of the Chern Classes}
In this section we prove our perversity bounds for the Chern classes $c_k(T_{\bar J})$. It is more natural to first prove this on a motivic level and then specialize to get the corresponding results in Chow groups and cohomology. 
\subsection{Some Vanishing Results} \label{sec4.1} In this section we prove some vanishing results akin to Section \ref{sec3.1}. Computing the $\tau$-classes of the Fourier-Mukai transforms, we see that since $\bar J$ is non-singular, 
\begin{equation}\label{eq47}
    \begin{aligned}
        \tau\left (FM_{\mathcal P\otimes p_1^* \mathcal L_1 \otimes p_1^*\mathcal L_2} (\mathcal K)\right) & = {{p_2}}_* \left (p_1^*\operatorname{ch}(\mathcal K)p_1^*\operatorname{ch}(\mathcal L_1)p_2^*\operatorname{ch}(\mathcal L_2)\cap \tau (\mathcal P) \right ), \\ \tau\left (FM_{\mathcal P^{-1}\otimes p_1^* \mathcal L_1 \otimes p_1^*\mathcal L_2}(\mathcal K) \right) & = {{p_2}}_* \left (p_1^*\operatorname{ch}(\mathcal K)p_1^*\operatorname{ch}(\mathcal L_1)p_2^*\operatorname{ch}(\mathcal L_2)\cap \tau (\mathcal P^{-1}) \right ).  
    \end{aligned}
\end{equation}
We re-write the RHS of Equation \eqref{eq47} as \begin{equation*} \begin{aligned} &{{p_2}}_* \left (p_1^*\operatorname{ch}(\mathcal K)p_1^*\operatorname{ch}(\mathcal L_1)p_2^*\operatorname{ch}(\mathcal L_2)\cap \tau (\mathcal P) \right )= \left( p_1^*\operatorname{ch}(\mathcal L_1)p_2^*\operatorname{ch}(\mathcal L_2)\cap \tau (\mathcal P)\right) (\operatorname{ch}(\mathcal K)), \\ &{{p_2}}_* \left (p_1^*\operatorname{ch}(\mathcal K)p_1^*\operatorname{ch}(\mathcal L_1)p_2^*\operatorname{ch}(\mathcal L_2)\cap \tau (\mathcal P^{-1}) \right )= \left ( p_1^*\operatorname{ch}(\mathcal L_1)p_2^*\operatorname{ch}(\mathcal L_2)\cap \tau (\mathcal P^{-1})\right ) (\operatorname{ch}(\mathcal K)) \end{aligned}\end{equation*}where we view $ p_1^*\operatorname{ch}(\mathcal L_1)p_2^*\operatorname{ch}(\mathcal L_2)\cap \tau (\mathcal P)$ and $p_1^*\operatorname{ch}(\mathcal L_1)p_2^*\operatorname{ch}(\mathcal L_2)\cap \tau (\mathcal P^{-1})$ as correspondences in $\operatorname{Corr}^\bullet_B(\bar J, \bar J)$. 
Note that from Proposition \ref{prop2.5}, we get \begin{equation}
\begin{aligned}
    \tau \left (FM_{\mathcal P \otimes p_1^* \mathcal L_1 \otimes p_2^*\mathcal L_2} \right ) (\mathcal K)\in A^{\geq g} (\bar J),  \\
    \tau \left (FM_{\mathcal P^{-1} \otimes p_1^* \mathcal L_1 \otimes p_2^*\mathcal L_2} \right ) (\mathcal K) \in A^{\geq g} (\bar J).
    \end{aligned}
\end{equation}Hence, from Equation \eqref{eq47} we get 
\begin{equation}\begin{aligned} \label{eq49}
 \sum_{i+m=g+k} \left ( p_1^*\operatorname{ch}(\mathcal L_1)p_2^*\operatorname{ch}(\mathcal L_2)\cap \tau (\mathcal P)  \right )_{i} \left (\operatorname{ch}_m (\mathcal K) \right ) =0, \\ \sum_{i+m=g+k} \left ( p_1^*\operatorname{ch}(\mathcal L_1)p_2^*\operatorname{ch}(\mathcal L_2)\cap \tau (\mathcal P^{-1})  \right )_{i} \left (\operatorname{ch}_m (\mathcal K) \right ) =0 \end{aligned}
\end{equation} if $k<g$. 

Using Lemma \ref{lem2.9} and the discussion in Section \ref{sec3.1}, we know that Proposition \ref{prop2.5} remains valid in the case of $K$-theory classes for $\mathcal L_1 = \bigotimes_{\cdot}\psi^{d_{\cdot}}(\pi^*T_B)\otimes \bigotimes_{j_\cdot} \psi^{e_{\cdot}}(\Omega_{\bar J})$, $\mathcal L_2= \bigotimes_{\cdot}\psi^{f_{\cdot}}(\Omega_{\bar J})$ and $\mathcal K= \psi^N(\Omega_{\bar J})$ since these $K$-theory classes are polynomials in the $J$-equivariant  bundle $\pi^*T_B$ and the exterior powers of $\Omega_{\bar J}$. Hence we get from Equation \eqref{eq49}, the following vanishing for $d<2g$ : 
\begin{equation}\label{eq50}
    \begin{aligned}
        & \underbrace{\sum_{i_\cdot} \sum_{j_\cdot}\sum_{k_\cdot}}_{\sum i+\sum j + \sum k+m=d} \left ( \prod_{i}\operatorname{ch}_{i_\cdot}(\psi^{d_{\cdot}}(\pi^*T_B))\prod_{j}\operatorname{ch}_{j_\cdot} (\psi^{e_\cdot}(p_1^* \Omega_{\bar J}))\prod_{k}\operatorname{ch}_{k_\cdot}(\psi^{f_\cdot}(p_2^*\Omega_{\bar J}))\cap \tau (\mathcal P^{-1}) \right ) \\ & \left ( \operatorname{ch}_m(\psi^N(\Omega_{\bar J}) \right ) = 0.   \\ \implies & \underbrace{\sum_{i_\cdot} \sum_{j_\cdot}\sum_{k_\cdot}}_{\sum i+\sum j + \sum k+m=d} \left ( \prod_{i}d_\cdot^{i_\cdot}\operatorname{ch}_{i_\cdot}(\pi^*T_B)\prod_{j}e_\cdot^{j_\cdot}\operatorname{ch}_{j_\cdot} (p_1^* \Omega_{\bar J})\prod_{k} f_\cdot^{k_\cdot}\operatorname{ch}_{k_\cdot} \cap \tau(\mathcal P^{-1})\right ) \left ( N^m \operatorname{ch}_m(\Omega_{\bar J} \right )= 0.
    \end{aligned}\end{equation}Since Equation \eqref{eq50} holds true for all $d_\cdot,e_\cdot, f_\cdot, N$'s we get \begin{equation} \label{eq51}\begin{aligned}
         & \left (\prod_{i}\operatorname{ch}_{i_\cdot }(\pi^*T_B)\prod_{j}\operatorname{ch}_{j_\cdot}(p_1^*\Omega_{\bar J}) \prod_{k}\operatorname{ch}_{k_\cdot}(p_2^*\Omega_{\bar J})\cap \tau (\mathcal P^{-1}) \right ) \left ( \operatorname{ch}_m (\Omega_{\bar J}) \right ) = 0 \\ &\text{if }\sum i_\cdot +\sum j_\cdot +\sum k_\cdot +m < 2g. 
         \end{aligned}
    \end{equation}From Equation \eqref{eq51}, we get the following vanishing of the Fourier Mukai transforms of the Chern classes as follows: 
    \begin{equation}\label{eq52}
        {\left(\operatorname{\mathfrak F}_\alpha^{-1}\right )}_i \left(c_j(T_{\bar J})\right )=0 \text{ if }i+j<2g.
    \end{equation}
    We shall use the vanishing \eqref{eq51} to give perversity bounds of the Chern classes. We end this section with the following corollary.
    \begin{cor}\label{cor4.1}
        We have the following identity for all $\alpha\in \mathbb Q$ \begin{equation}
            \mathfrak p^{(\alpha)}_k \left (c_k (T_{\bar J}) \right )=c_k (T_{\bar J}) 
        \end{equation}
    \end{cor}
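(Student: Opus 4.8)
The plan is to derive this as a purely formal consequence of the vanishing \eqref{eq52} together with the identity $\mathfrak p^{(\alpha)}_k=[\Delta_{\bar J/B}]-\mathfrak q^{(\alpha)}_{k+1}$ recorded in Section~\ref{sec1.2}. First I would observe that, since $[\Delta_{\bar J/B}]$ acts as the identity on $A_\bullet(\bar J)$, the asserted equality $\mathfrak p^{(\alpha)}_k(c_k(T_{\bar J}))=c_k(T_{\bar J})$ is equivalent to the single vanishing $\mathfrak q^{(\alpha)}_{k+1}(c_k(T_{\bar J}))=0$, where $\mathfrak q^{(\alpha)}_{k+1}=\sum_{i\ge k+1}(\mathfrak F_\alpha)_i\circ(\mathfrak F^{-1}_\alpha)_{2g-i}$.

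Next I would expand this complementary projector acting on the Chern class and write
\[
\mathfrak q^{(\alpha)}_{k+1}\bigl(c_k(T_{\bar J})\bigr)=\sum_{i\ge k+1}(\mathfrak F_\alpha)_i\Bigl((\mathfrak F^{-1}_\alpha)_{2g-i}\bigl(c_k(T_{\bar J})\bigr)\Bigr).
\]
For every index $i\ge k+1$ occurring in this sum we have $(2g-i)+k<2g$, so \eqref{eq52}, applied with the two degrees $2g-i$ and $k$, yields $(\mathfrak F^{-1}_\alpha)_{2g-i}(c_k(T_{\bar J}))=0$. Hence every summand is zero, so $\mathfrak q^{(\alpha)}_{k+1}(c_k(T_{\bar J}))=0$ and therefore $\mathfrak p^{(\alpha)}_k(c_k(T_{\bar J}))=[\Delta_{\bar J/B}](c_k(T_{\bar J}))=c_k(T_{\bar J})$. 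Since \eqref{eq52} holds for every $\alpha\in\mathbb Q$, so does the conclusion.

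There is no genuine obstacle in this step: the real content has already been spent in establishing \eqref{eq52}, which rests on the co-dimension estimate of Proposition~\ref{prop2.5} and the Adams-operation argument of this section; the present corollary is then a one-line manipulation of correspondences. I expect this is precisely the input needed for the perversity bounds that follow, since it says exactly that $c_k(T_{\bar J})$ is fixed by the $k$-th projector, hence lies in the $k$-th step of the (Chow-theoretic) perverse filtration.
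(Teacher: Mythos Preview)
Your argument is correct and is essentially identical to the paper's: the paper writes $c_k(T_{\bar J})=\sum_i(\mathfrak F_\alpha)_i\circ(\mathfrak F^{-1}_\alpha)_{2g-i}(c_k(T_{\bar J}))$ using $\mathfrak F_\alpha\circ\mathfrak F^{-1}_\alpha=[\Delta_{\bar J/B}]$ and then drops the terms with $i>k$ via \eqref{eq52}, which is exactly your computation $\mathfrak q^{(\alpha)}_{k+1}(c_k(T_{\bar J}))=0$ phrased from the complementary side.
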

 \begin{proof}
     The proof is a straight-forward computation; \begin{equation*}
         \begin{aligned}
              c_k(T_{\bar J}) & = \sum_{i}\left (\mathfrak F_{\alpha}\right )_{i} \circ \left ( \mathfrak F^{-1}_\alpha \right )_{2g-i} \left (c_k(T_{\bar J}) \right )  \\ & = \sum _{i\leq k} \left (\mathfrak F_{\alpha}\right )_{i} \circ \left ( \mathfrak F^{-1}_\alpha \right )_{2g-i} \left (c_k(T_{\bar J}) \right ) \\ & = \mathfrak p_k^{(\alpha)} \left (c_k(T_{\bar J}) \right )
         \end{aligned}
     \end{equation*}where the first equality follows from the fact $\mathfrak F_\alpha \circ \mathfrak F_\alpha^{-1}=[\Delta_{\bar J/B}]$ and the second equality follows from Equation \eqref{eq52}. This completes the proof.
 \end{proof}
\subsection{Proof of Theorem \ref{thm0.1}} \label{sec4.2}

We shall deduce Theorem \ref{thm0.1} as a motivic consequence. First of all, note that if $z\in A_k(\bar J)$, we consider ${\Delta_{\bar J/B}}_* (z)\in \operatorname{Corr}_B^k(\bar J, \bar J)$.  
\begin{thm}
If $c_k(T_{\bar J})$ is the $k$-th Chern class, we have for all $\alpha, \beta\in \mathbb Q$  \begin{equation}
    {\Delta_{\bar J/B}}_* \left ( c_k(T_{\bar J})\right )\circ \mathfrak p^{(\alpha)}_i\in \operatorname{Hom}_{\operatorname{CHM}(B)}\left ( P_i^{(\alpha)} \mathfrak h(\bar J), P^{(\beta)}_{i+k} \mathfrak h(\bar J)(k)\right ) \ \forall \ i
\end{equation}
\end{thm}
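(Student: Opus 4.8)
The plan is to reduce the statement to the Fourier-theoretic vanishing of Corollary \ref{cor3.1} together with the Chern-class vanishing recorded in Equation \eqref{eq51}/\eqref{eq52}, exactly as in the proof of multiplicativity (Proposition \ref{prop3.3}). First I would unwind what needs to be shown: by definition of morphisms in $\operatorname{CHM}(B)$, the claim is that $\mathfrak p^{(\beta)}_{i+k}\circ {\Delta_{\bar J/B}}_*(c_k(T_{\bar J}))\circ \mathfrak p^{(\alpha)}_i = {\Delta_{\bar J/B}}_*(c_k(T_{\bar J}))\circ \mathfrak p^{(\alpha)}_i$, i.e.\ equivalently that $\mathfrak q^{(\beta)}_{i+k+1}\circ {\Delta_{\bar J/B}}_*(c_k(T_{\bar J}))\circ \mathfrak p^{(\alpha)}_i = 0$. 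So the whole theorem is a single vanishing statement, and the strategy is to expand the three factors in terms of the graded pieces $(\mathfrak F_\bullet)_j$, $(\mathfrak F^{-1}_\bullet)_j$ and bound degrees.

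Next I would insert, between $ {\Delta_{\bar J/B}}_*(c_k(T_{\bar J}))$ and $\mathfrak p^{(\alpha)}_i$, a resolution of the class $c_k(T_{\bar J})$ via Fourier transform: write $c_k(T_{\bar J})={\Delta_{\bar J/B}}_* c_k \circ (\mathfrak F_\beta\circ\mathfrak F_\beta^{-1})$ (or the analogous identity using $\mathfrak F_\alpha$ on the other side), so that Equation \eqref{eq52}, which says $(\mathfrak F_\beta^{-1})_j(c_k(T_{\bar J}))=0$ for $j<2g-k$, forces the $\mathfrak F_\beta^{-1}$ appearing there to have graded degree $\geq 2g-k$, hence lies in $\operatorname{Corr}^{\geq g-k}_B(\bar J,\bar J)$. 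Meanwhile, expanding $\mathfrak p^{(\alpha)}_i = \sum_{j\le i}(\mathfrak F_\alpha)_j\circ (\mathfrak F_\alpha^{-1})_{2g-j}$ and inserting a further copy of $\mathfrak F_\beta^{-1}\circ\mathfrak F_\beta$ between $\mathfrak q^{(\beta)}_{i+k+1}$ and the Chern-class factor, Corollary \ref{cor3.1} kills the cross-terms and pins the surviving $(\mathfrak F_\beta)_{j_3}$ to $j_3\ge i+k+1$, i.e.\ degree $\ge i+k+1-g$. Combining: the composite factors through $\operatorname{Corr}^{\ge (i+k+1-g)+k+(g-k)}_B = \operatorname{Corr}^{\ge i+k+1}_B$ — but wait, the precise bookkeeping is: the $\mathfrak F_\beta$-part contributes degree $\ge i+k+1-g$, the Chern class $c_k$ contributes degree $k$, and the $\mathfrak F_\beta^{-1}$-part contributes degree $\ge g-k$; I would set this up so the total degree strictly exceeds $i+k$, while the left-hand side is \emph{a priori} a degree-$k$ correspondence landing in $\operatorname{Corr}^k_B(P^{(\alpha)}_i\mathfrak h,P^{(\beta)}_{i+k}\mathfrak h(k))$. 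The degree mismatch forces vanishing of the "$\mathfrak q$" part, which is the desired statement.

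The structure to write is therefore: (1) restate the goal as $\mathfrak q^{(\beta)}_{i+k+1}\circ {\Delta_{\bar J/B}}_*(c_k(T_{\bar J}))\circ \mathfrak p^{(\alpha)}_i=0$; (2) expand all three factors into graded pieces and insert $\mathfrak F_\beta\circ\mathfrak F_\beta^{-1}=[\Delta_{\bar J/B}]$ where needed; (3) apply Corollary \ref{cor3.1} to eliminate cross-terms and Equation \eqref{eq52} to restrict the Chern-class/Fourier degrees; (4) do the degree count showing the surviving sum lives in $\operatorname{Corr}^{\ge i+k+1}_B$ while the whole expression is forced to be degree $\le i+k$ — hence $0$. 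The main obstacle I anticipate is bookkeeping the bivariant-degree conventions correctly: keeping straight the shift by $\dim B$ (from the passage between $A^\bullet$ and $\operatorname{Corr}^\bullet_B$, cf.\ the $+\dim B$ in \eqref{eq33}), the index conventions $\mathfrak Z_k\in \operatorname{Corr}^{k-g}_B$, and the Tate twist $(k)$ in the target. Once the index/degree dictionary is fixed, each individual vanishing is a direct citation of Corollary \ref{cor3.1} or \eqref{eq52}, so there is no genuinely new geometric input beyond Propositions \ref{prop2.5}--\ref{prop2.7} already invoked there.
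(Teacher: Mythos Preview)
Your reformulation in step (1) is exactly right: the statement is equivalent to the correspondence-level vanishing
\[
\mathfrak q^{(\beta)}_{i+k+1}\circ {\Delta_{\bar J/B}}_*(c_k(T_{\bar J}))\circ \mathfrak p^{(\alpha)}_i = 0.
\]
The gap is in how you propose to use Equation~\eqref{eq52}. That equation says $(\mathfrak F_\beta^{-1})_j$ applied to the \emph{class} $c_k(T_{\bar J})\in A^k(\bar J)$ vanishes for $j<2g-k$; but in the expression above, $c_k$ appears as the \emph{correspondence} ${\Delta_{\bar J/B}}_*(c_k)$, i.e.\ cup product, sitting between two Fourier-type correspondences. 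There is no $\mathfrak F_\beta^{-1}$ being applied to the class $c_k$ anywhere in the natural expansion, so \eqref{eq52} does not directly constrain any degree. Your degree count also does not close: the pieces you list sum to $\ge i+k+1$, but the full expression is a priori in $\operatorname{Corr}^k_B$, not $\operatorname{Corr}^{\le i+k}_B$, and you have dropped the contributions of $(\mathfrak F_\beta^{-1})_{2g-j_3}$ and of $\mathfrak p^{(\alpha)}_i$.

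The paper closes this gap by passing through the small diagonal. Since $c_k=\mathfrak p^{(\gamma)}_k(c_k)$ by Corollary~\ref{cor4.1} (this is where \eqref{eq52} is actually used), one has the identity
\[
{\Delta_{\bar J/B}}_*(c_k)\circ \mathfrak p^{(\alpha)}_i
= {p_{23}}_*\Bigl(p_1^*c_k \cap \bigl([\Delta^{sm}_{\bar J/B}]\circ(\mathfrak p^{(\gamma)}_k\times \mathfrak p^{(\alpha)}_i)\bigr)\Bigr),
\]
and then the generalized multiplicativity vanishing of Equation~\eqref{eq46} kills the $\mathfrak q^{(\beta)}_{i+k+1}$-composite in one stroke. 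In other words, the degree count you are aiming for is already packaged inside Proposition~\ref{prop3.3}/Equation~\eqref{eq46}; what is missing from your outline is precisely the small-diagonal identity that converts ``cup product by $c_k$'' into a bi-correspondence to which multiplicativity applies. If you want a direct route that avoids the small diagonal, you can instead note that ${\Delta_{\bar J/B}}_*(c_k)\circ(\mathfrak F_\alpha)_n = p_2^*(c_k)\cdot(\mathfrak F_\alpha)_n$ as correspondences and then invoke the full Equation~\eqref{eq36} (which allows $\operatorname{ch}_\bullet(p_2^*\Omega_{\bar J})$ twists) to obtain $(\mathfrak F_\beta^{-1})_m\circ\bigl(p_2^*c_k\cdot(\mathfrak F_\alpha)_n\bigr)=0$ for $m+n+k<2g$; this does make your term-by-term vanishing work, but \eqref{eq52} alone is not enough.
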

\begin{proof}
Note that if $\mathfrak p^{(\gamma)}_i(z)=z$ for some $z\in A_\bullet (\bar J)$, we have \begin{equation}\label{eq55}
\begin{aligned}
     \mathfrak q_{i+k+1}^{(\beta)}\circ {\Delta_{\bar J/B}}_* z \circ  \mathfrak p_i^{(\alpha)} & =  \mathfrak q_{i+k+1}^{(\beta)}\circ {\Delta_{\bar J/B}}_* \mathfrak p^{(\gamma)}_k(z) \circ  \mathfrak p^{(\alpha)}_i \\ & =  \mathfrak q^{(\beta)}_{i+k+1} \circ {p_{23}}_* \left ( p_1^* z \cap \left ( [\Delta^{sm}_{\bar J/B} ] \circ (\mathfrak p^{(\gamma)}_k \times \mathfrak p^{(\alpha)}_i ) \right ) \right ) \\ & = {p_{23}}_* \left ( p_1^* z \cap \left ( \mathfrak q^{(\beta)}_{i+k+1} \circ [\Delta^{sm}_{\bar J/B}\circ ] \circ (\mathfrak p_k^{(\gamma)} \times \mathfrak p_i^{(\alpha)})\right ) \right )\\ & = 0
\end{aligned}
\end{equation}where the last equality follows from Equation \eqref{eq46}. This completes the proof in view of Corollary \ref{cor4.1}.    
\end{proof}
\begin{thm}
    Under the identification $\operatorname{Corr}_B^d(B, \bar J)= A^d (\bar J)$, we have that \begin{equation}
        \label{eq56} c_k(T_{\bar J})\in A^k (P^{(\alpha)}_k \mathfrak h(\bar J)) \ \forall \ \alpha\in \mathbb Q.
    \end{equation}
\end{thm}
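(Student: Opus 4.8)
The plan is to unwind the definition of $A^k(P^{(\alpha)}_k\mathfrak h(\bar J))$ and reduce everything to the vanishing already established. Recall that $A^k(P^{(\alpha)}_k\mathfrak h(\bar J)) = \operatorname{Hom}_{\operatorname{CHM}(B)}(\mathfrak h(B)(-k), P^{(\alpha)}_k\mathfrak h(\bar J))$, and that an element of this Hom group is precisely a class $z \in A^k(\bar J)$ satisfying $\mathfrak p^{(\alpha)}_k(z) = z$; indeed, composing $\mathfrak h(B)(-k) \to \mathfrak h(\bar J)$ (i.e. a class $z \in A^k(\bar J)$) with the projector $\mathfrak p^{(\alpha)}_k$ on the target lands it in $P^{(\alpha)}_k\mathfrak h(\bar J)$, and the image is all of $\operatorname{Hom}_{\operatorname{CHM}(B)}(\mathfrak h(B)(-k), P^{(\alpha)}_k\mathfrak h(\bar J))$ precisely because $P^{(\alpha)}_k\mathfrak h(\bar J)$ is a direct summand of $\mathfrak h(\bar J)$ cut out by $\mathfrak p^{(\alpha)}_k$. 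So the statement $c_k(T_{\bar J}) \in A^k(P^{(\alpha)}_k\mathfrak h(\bar J))$ is equivalent to the assertion
\begin{equation*}
\mathfrak p^{(\alpha)}_k\bigl(c_k(T_{\bar J})\bigr) = c_k(T_{\bar J}).
\end{equation*}

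But this is exactly the content of Corollary \ref{cor4.1}, which was proved using the Fourier-vanishing identity \eqref{eq52}: writing $c_k(T_{\bar J}) = \sum_i (\mathfrak F_\alpha)_i \circ (\mathfrak F^{-1}_\alpha)_{2g-i}(c_k(T_{\bar J}))$ via $\mathfrak F_\alpha \circ \mathfrak F^{-1}_\alpha = [\Delta_{\bar J/B}]$, and noting that $(\mathfrak F^{-1}_\alpha)_{2g-i}(c_k(T_{\bar J})) = 0$ whenever $(2g-i) + k < 2g$, i.e. whenever $i > k$, only the terms with $i \le k$ survive, and their sum is $\mathfrak p^{(\alpha)}_k(c_k(T_{\bar J}))$ by definition of the projector. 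So the proof is essentially a two-line invocation: identify $A^k(P^{(\alpha)}_k\mathfrak h(\bar J))$ with the fixed locus of $\mathfrak p^{(\alpha)}_k$ inside $A^k(\bar J)$, then apply Corollary \ref{cor4.1}.

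The only step requiring a moment's care is the identification of the Hom group with the $\mathfrak p^{(\alpha)}_k$-fixed classes. One should spell out that under $\operatorname{Corr}^\bullet_B(B,\bar J) = A^\bullet(\bar J)$, a morphism $\mathfrak h(B)(-k) \to P^{(\alpha)}_k\mathfrak h(\bar J) = (\bar J, \mathfrak p^{(\alpha)}_k, 0)$ is by definition an element of $\mathfrak p^{(\alpha)}_k \circ \operatorname{Corr}^k_B(B,\bar J) \circ [\mathrm{id}_B] = \mathfrak p^{(\alpha)}_k \circ A^k(\bar J)$, and that $\mathfrak p^{(\alpha)}_k \circ z = \mathfrak p^{(\alpha)}_k(z)$ under the identification of convolution with the Fourier–Mukai action recalled in Section \ref{sec1.1}. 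Thus $z$ gives a well-defined element of $A^k(P^{(\alpha)}_k\mathfrak h(\bar J))$ iff $z = \mathfrak p^{(\alpha)}_k(z)$. I do not anticipate any genuine obstacle here: all the real work — the codimension estimates of Section \ref{sec2.2}, the Adams-operation argument, and the resulting vanishing \eqref{eq51}–\eqref{eq52} — has already been carried out, and this theorem is just the clean motivic repackaging. The subsequent reduction to the Chow and cohomological statements of Theorem \ref{thm0.1} then follows by applying the Corti–Hanamura realization functor and Proposition \ref{prop3.2}.
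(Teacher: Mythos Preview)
Your proposal is correct and follows essentially the same approach as the paper: both unwind the definition $A^k(P^{(\alpha)}_k\mathfrak h(\bar J)) = \mathfrak p_k^{(\alpha)} \circ \operatorname{Corr}^k_B(B,\bar J)$ and then invoke Corollary~\ref{cor4.1} to conclude. Your write-up is more detailed (spelling out why the image of the idempotent equals its fixed locus and recapping the proof of Corollary~\ref{cor4.1}), but the argument is the same.
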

\begin{proof}
    By definition of the Chow groups of a motive (see Section \ref{sec1.1}), we have$$A^k\left (P^{(\alpha)}_k \mathfrak h(\bar J) \right ):= \operatorname{Hom}_{\operatorname{CHM}(B)}\left (\mathfrak h(B)(-k), P^{\alpha }_k\mathfrak h(\bar J)\right )= \mathfrak p_k^{(\alpha)} \circ\operatorname{Corr}^k_{B}(B, \bar J).$$Since $c_k(T_{\bar J}) = \mathfrak p^{(\alpha)}_k \left (c_k(T_{\bar J}) \right )= \mathfrak p_k^{(\alpha)} \circ c_k(T_{\bar J}) $ by Corollary \ref{cor4.1}, the conclusion follows.  
\end{proof}
Taking appropriate specializations, we get the following corollaries. \begin{cor}
   We have $c_k(T_{\bar J})\in P_k H^{2k}(\bar J; \mathbb Q)$. 
\end{cor}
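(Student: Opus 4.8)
The plan is to deduce the cohomological bound $c_k(T_{\bar J})\in P_k H^{2k}(\bar J;\mathbb Q)$ directly from the motivic statement $c_k(T_{\bar J})\in A^k(P^{(\alpha)}_k\mathfrak h(\bar J))$ by applying the Corti--Hanamura realization functor together with the realization property of the motivic perverse filtration established in Proposition \ref{prop1.4}. Concretely, I would first recall that under the identification $\operatorname{Corr}^k_B(B,\bar J)=A^k(\bar J)$, a class $\mathfrak z\in A^k(\bar J)$ lying in $A^k(P^{(\alpha)}_k\mathfrak h(\bar J))=\mathfrak p^{(\alpha)}_k\circ\operatorname{Corr}^k_B(B,\bar J)$ is, by definition, a morphism of motives $\mathfrak h(B)(-k)\to P^{(\alpha)}_k\mathfrak h(\bar J)$; composing with the structural inclusion $P^{(\alpha)}_k\mathfrak h(\bar J)\hookrightarrow\mathfrak h(\bar J)$ recovers $\mathfrak z$ itself, via Corollary \ref{cor4.1} which gives $\mathfrak p^{(\alpha)}_k(c_k(T_{\bar J}))=c_k(T_{\bar J})$.

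Next I would apply the realization functor $\operatorname{CHM}(B)\to D^b_c(B)$ and then take global cohomology. Under realization, $\mathfrak h(B)(-k)$ goes to $\mathbb Q_B[-2k]$ (up to the appropriate shift conventions), and the morphism $c_k(T_{\bar J})\colon\mathfrak h(B)(-k)\to P^{(\alpha)}_k\mathfrak h(\bar J)$ realizes to a morphism $\mathbb Q_B\to \big(\mathfrak p^{(\alpha)}_k\big)_*(\pi_*\mathbb Q_{\bar J})[2k]$ in $D^b_c(B)$. By the realization property in Proposition \ref{prop1.4}, $\big(\mathfrak p^{(\alpha)}_k\big)_*(\pi_*\mathbb Q_{\bar J})\simeq {}^p\tau_{\leq k+\dim B}\,\pi_*\mathbb Q_{\bar J}$, with the inclusion into $\pi_*\mathbb Q_{\bar J}$ being the canonical one. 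Taking hypercohomology $H^*(B,-)$ and tracking degrees, the class $c_k(T_{\bar J})\in H^{2k}(\bar J;\mathbb Q)=H^{2k}(B,\pi_*\mathbb Q_{\bar J})$ is seen to lie in the image of $H^{2k}(B,{}^p\tau_{\leq k+\dim B}\,\pi_*\mathbb Q_{\bar J})$, which is precisely the definition of $P_k H^{2k}(\bar J;\mathbb Q)$ with the normalization convention of Definition \ref{def1.1} (the shift by $\dim B - R_f$, resp. $\dim B$, is absorbed into the indexing of the perverse filtration).

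The only genuinely delicate point is bookkeeping of the shift and Tate-twist conventions so that the index $k$ on the motivic side matches the index $k$ on the cohomological perverse filtration. I would handle this by tracing through the normalization already fixed in Definition \ref{def1.1}: the motivic perverse filtration $P_k\mathfrak h(\bar J)$ realizes to ${}^p\tau_{\leq k+\dim\bar J-R_f}Rf_*\mathbb Q_{\bar J}$, and for a compactified Jacobian fibration $\dim\bar J - R_f$ relates to $\dim B$ via $R_f = \dim\bar J - \dim B$ by equidimensionality of the fibers together with full support (Proposition \ref{prop1.2}), so $\dim\bar J - R_f = \dim B$; this is exactly the shift appearing in Proposition \ref{prop1.4}, and the bound $c_k\in A^k(P^{(\alpha)}_k\mathfrak h(\bar J))$ therefore realizes to $c_k\in P_k H^{2k}$ with no index shift. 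I expect no substantive obstacle beyond this index matching, since all the real content — the Fourier vanishing \eqref{eq52}, hence Corollary \ref{cor4.1}, hence the motivic membership \eqref{eq56} — has already been established.
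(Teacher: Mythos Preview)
Your proposal is correct and follows essentially the same approach as the paper: the paper simply states that the cohomological bound is obtained by ``taking appropriate specializations'' of the motivic statement \eqref{eq56}, and your argument spells out exactly this specialization via the Corti--Hanamura realization functor together with the realization property of Proposition~\ref{prop1.4}. Your careful bookkeeping of the shift $\dim\bar J - R_f = \dim B$ is a welcome elaboration of what the paper leaves implicit.
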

\begin{cor}
   We have $c_k(T_{\bar J})\in P_k A^k(\bar J)$.
\end{cor}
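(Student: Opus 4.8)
\emph{Proof proposal.} The plan is to obtain the Chow-group statement as the $\alpha$-independent specialization of the motivic assertion $c_k(T_{\bar J})\in A^k(P^{(\alpha)}_k\mathfrak h(\bar J))$ already proved (Equation \eqref{eq56}). First I would unwind the definition of the Chow group of the motive $P^{(\alpha)}_k\mathfrak h(\bar J)=(\bar J,\mathfrak p^{(\alpha)}_k,0)$: by Section \ref{sec1.1},
\[
A^k\bigl(P^{(\alpha)}_k\mathfrak h(\bar J)\bigr)=\operatorname{Hom}_{\operatorname{CHM}(B)}\bigl(\mathfrak h(B)(-k),\,P^{(\alpha)}_k\mathfrak h(\bar J)\bigr)=\mathfrak p^{(\alpha)}_k\circ\operatorname{Corr}^k_B(B,\bar J),
\]
and under the identification $\operatorname{Corr}^k_B(B,\bar J)=A^k(\bar J)$ together with the identity $\mathfrak Z(z)=\mathfrak Z\circ z$ for $\mathfrak Z\in\operatorname{Corr}^\bullet_B(B,\bar J)$ recorded there, this is precisely the image $(\mathfrak p^{(\alpha)}_k)_*A^k(\bar J)$, i.e. the induced filtration $P^{(\alpha)}_kA^k(\bar J)$ of Equation \eqref{eq38}. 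Hence \eqref{eq56} literally says $c_k(T_{\bar J})\in P^{(\alpha)}_kA^k(\bar J)$ for every $\alpha\in\mathbb Q$.

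Alternatively, and more directly, I would just invoke Corollary \ref{cor4.1}: since $\mathfrak p^{(\alpha)}_k\bigl(c_k(T_{\bar J})\bigr)=c_k(T_{\bar J})$, the class $c_k(T_{\bar J})$ lies in the image $(\mathfrak p^{(\alpha)}_k)_*A^k(\bar J)=P^{(\alpha)}_kA^k(\bar J)$ by the very definition \eqref{eq38} of the induced Chow-theoretic filtration. Either way, the work has already been done in Section \ref{sec4.1}.

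To finish, I would apply Proposition \ref{prop3.2}, which says that the a priori distinct filtrations $\{P^{(\alpha)}_\bullet A_\bullet(\bar J)\}$ all coincide with the intrinsic filtration $\{P_\bullet A_\bullet(\bar J)\}$. Taking the graded piece in codimension $k$, i.e. $A_{\dim\bar J-k}(\bar J)=A^k(\bar J)$, gives $P^{(\alpha)}_kA^k(\bar J)=P_kA^k(\bar J)$, so $c_k(T_{\bar J})\in P_kA^k(\bar J)$, as claimed.

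There is essentially no substantive obstacle here: the corollary is a bookkeeping consequence of the motivic Theorem \eqref{eq56}, Corollary \ref{cor4.1}, and Proposition \ref{prop3.2}. The only point needing a little care is matching the index and Tate-twist conventions — namely that "$c_k(T_{\bar J})\in A^k(P^{(\alpha)}_k\mathfrak h(\bar J))$", with its codimension-$k$ grading and its $(k)$-twist in the definition of $A^k$ of a motive, is exactly the statement that $c_k(T_{\bar J})$, viewed in $A^k(\bar J)=A_{\dim\bar J-k}(\bar J)$, is fixed by $\mathfrak p^{(\alpha)}_k$; once this identification is spelled out, the argument is immediate.
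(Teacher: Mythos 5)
Your proposal is correct and follows the paper's own route: the corollary is obtained by specializing the motivic statement $c_k(T_{\bar J})\in A^k(P^{(\alpha)}_k\mathfrak h(\bar J))$ (itself resting on Corollary \ref{cor4.1}) to Chow groups via the identification $A^k(P^{(\alpha)}_k\mathfrak h(\bar J))=\mathfrak p^{(\alpha)}_k\circ\operatorname{Corr}^k_B(B,\bar J)=(\mathfrak p^{(\alpha)}_k)_*A^k(\bar J)$, and then invoking Proposition \ref{prop3.2} to identify this with the $\alpha$-independent filtration $P_kA^k(\bar J)$. The bookkeeping of conventions you spell out matches what the paper leaves implicit under "taking appropriate specializations."
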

\begin{cor}\label{cor4.6}
    The homological realization of ${\Delta_{\bar J/B}}_*c_k(T_{\bar J})$ under the Corti-Hanamura realization functor, i.e. taking cup-product with $c_k(T_{\bar J})$ induces a morphism $${}^p\tau_{\leq i+\dim B} \left (\pi_*\mathbb Q_{\bar J} \right ) \rightarrow \left ({}^p\tau _{\leq i+k +\dim B}\left ( \pi_* \mathbb Q_{\bar J} \right )\right)[2k]={}^p\tau_{\leq i-k+\dim B}(\pi_* \mathbb Q_{\bar J}[2k]).$$  
\end{cor}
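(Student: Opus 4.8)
The plan is to obtain this statement by applying the Corti--Hanamura realization functor $\operatorname{CHM}(B)\to D^b_c(B)$ to the motivic assertion proved above, that ${\Delta_{\bar J/B}}_*\left(c_k(T_{\bar J})\right)\circ\mathfrak p^{(\alpha)}_i$ lies in $\operatorname{Hom}_{\operatorname{CHM}(B)}\bigl(P^{(\alpha)}_i\mathfrak h(\bar J),P^{(\beta)}_{i+k}\mathfrak h(\bar J)(k)\bigr)$ (taking for definiteness $\beta=\alpha$), and then unwinding the identifications on the two sides. First I would recall from Proposition \ref{prop1.4} and the remark following it that the realization functor carries $P^{(\alpha)}_j\mathfrak h(\bar J)$ to ${}^p\tau_{\leq j+\dim B}\,\pi_*\mathbb Q_{\bar J}$ and $Q^{(\alpha)}_{j+1}\mathfrak h(\bar J)$ to ${}^p\tau_{\geq j+1+\dim B}\,\pi_*\mathbb Q_{\bar J}$, the transition maps being the canonical perverse truncation morphisms; and that a Tate twist $(k)$ becomes the shift $[2k]$, since the functor sends $(X,\mathfrak p,m)$ to $\mathfrak p_*(f_*\mathbb Q_X[2m])$ and $(k)$ increases $m$ by $k$. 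Consequently $P^{(\alpha)}_{i+k}\mathfrak h(\bar J)(k)$ realizes to $\bigl({}^p\tau_{\leq i+k+\dim B}\,\pi_*\mathbb Q_{\bar J}\bigr)[2k]$, and the identity $\bigl({}^p\tau_{\leq i+k+\dim B}\,\pi_*\mathbb Q_{\bar J}\bigr)[2k]={}^p\tau_{\leq i-k+\dim B}\bigl(\pi_*\mathbb Q_{\bar J}[2k]\bigr)$ appearing in the statement is simply the compatibility of the perverse $t$-structure with shifts.

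Next I would identify the realization of the degree-$k$ self-correspondence ${\Delta_{\bar J/B}}_*\left(c_k(T_{\bar J})\right)\in\operatorname{Corr}^k_B(\bar J,\bar J)$, regarded as a morphism $\mathfrak h(\bar J)\to\mathfrak h(\bar J)(k)$. By the standard computation (projection formula along the relative diagonal) its Fourier--Mukai action on cycles is $z\mapsto z\cdot c_k(T_{\bar J})$, so under realization it becomes the cup-product morphism $\cup\,c_k(T_{\bar J})\colon\pi_*\mathbb Q_{\bar J}\to\pi_*\mathbb Q_{\bar J}[2k]$. Applying the realization functor to the composite $\mathfrak p^{(\alpha)}_{i+k}\circ{\Delta_{\bar J/B}}_*\left(c_k(T_{\bar J})\right)\circ\mathfrak p^{(\alpha)}_i$ --- which by the motivic theorem is a genuine morphism $P^{(\alpha)}_i\mathfrak h(\bar J)\to P^{(\alpha)}_{i+k}\mathfrak h(\bar J)(k)$ --- then yields precisely the asserted morphism ${}^p\tau_{\leq i+\dim B}(\pi_*\mathbb Q_{\bar J})\to\bigl({}^p\tau_{\leq i+k+\dim B}(\pi_*\mathbb Q_{\bar J})\bigr)[2k]$, whose composition with the inclusion into $\pi_*\mathbb Q_{\bar J}[2k]$ is the restriction of $\cup\,c_k(T_{\bar J})$ to the subobject ${}^p\tau_{\leq i+\dim B}(\pi_*\mathbb Q_{\bar J})$. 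Equivalently, the only non-formal input is that the further composite with the projection $\pi_*\mathbb Q_{\bar J}[2k]\to\bigl({}^p\tau_{\geq i+k+1+\dim B}\pi_*\mathbb Q_{\bar J}\bigr)[2k]$ vanishes, which is exactly the homological realization of the identity $\mathfrak q^{(\alpha)}_{i+k+1}\circ{\Delta_{\bar J/B}}_*\left(c_k(T_{\bar J})\right)\circ\mathfrak p^{(\alpha)}_i=0$ underlying the motivic theorem.

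I expect no serious obstacle: once the motivic theorem is in hand the corollary is essentially bookkeeping. The points that require a little care are the two identifications invoked above --- the realization of a Tate twist as a shift, and of the diagonal push-forward correspondence as the cup-product operator, together with the attendant degree and sign conventions --- and the remark that, although an abstract lift of the restriction of $\cup\,c_k(T_{\bar J})$ to ${}^p\tau_{\leq i+\dim B}(\pi_*\mathbb Q_{\bar J})$ through the perverse truncation triangle need not be unique for $k\geq 2$, the morphism in the statement is the canonical one, namely the realization of the motivic morphism $\mathfrak p^{(\alpha)}_{i+k}\circ{\Delta_{\bar J/B}}_*\left(c_k(T_{\bar J})\right)\circ\mathfrak p^{(\alpha)}_i$, so that no lifting argument is actually needed.
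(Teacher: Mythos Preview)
Your proposal is correct and follows the same approach as the paper: the paper simply states this corollary as one of several obtained by ``taking appropriate specializations'' of the motivic theorem, and your write-up spells out exactly what that specialization entails (realization of $P^{(\alpha)}_j\mathfrak h(\bar J)$ as the perverse truncation via Proposition~\ref{prop1.4}, Tate twist as shift, and the diagonal push-forward correspondence as cup product). There is nothing to add.
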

 We briefly recall the notion of strong perversity as introduced in \cite{MR4792069}.
 \begin{defn}[\cite{MR4792069}, Section $1.1$] For a proper morphism $X\xrightarrow{f} B$ between complex non-singular varieties with equi-dimensional fibers $\gamma \in H^l(X, \mathbb Q)$ has strong perversity $c$ if the induced morphism $$\gamma: f_* \mathbb Q_X \rightarrow f_* \mathbb Q_X[l]$$after pushing forward along $f$ satisfies $$\gamma ({}^p\tau_{\leq i}(f_*\mathbb Q_X))\subset {}^p\tau_{\leq i+(c-l)} \left (f_* \mathbb Q_X[l]\right ).$$\end{defn}
So Corollary \ref{cor4.6} shows that $c_k(T_{\bar J})$ has strong perversity $k$ for every $k$. 

\subsection{Examples}\label{Examples}
We conclude this paper by the following two examples. The first example, {Example} \ref{ex4.7} shows that when there are no singular fibers, the perversity bound can be much smaller; on the other hand, we show in {Example} \ref{ex4.8} that, when there are singular fibers, our perversity bound is optimal. 
\begin{example}\label{ex4.7}
    Consider the case $\mathcal C\rightarrow B$ is smooth and proper so $\bar J= J$ and $J/B $ is an abelian scheme over $B$. We have $T_{\bar J}|_{\bar J_b}$ is trivial as $\bar J_b$ is an abelian variety and hence $T_{\bar J}=\pi^*L$ for some $L\in \operatorname{Pic}(B)$. It follows that $c_k(T_{\bar J})\in P_0 H^{2k}(\bar J;\mathbb Q) $ for every $k$ in this case. 
\end{example}
\begin{example}\label{ex4.8}
    Consider $S\xrightarrow{\pi}\mathbb P^1$, an elliptic $K3$ surface with integral fibers together with a section $s: \mathbb P^1 \rightarrow S$ so that $S$ is isomorphic to a compactified Jacobian family $\bar J$. We have $c_2(T_S)=24c_S$ where $c_S$ is the Beauville-Voisin class as shown in \cite{MR2047674}.   

    Set ${\mathfrak s}:= s_*([\mathbb P^1])\in A_1(S) $ and $\mathfrak f:= \pi^*([\operatorname{pt}])\in A_1(S)$. Note that $\mathfrak{sf}=c_S$ Consider $\Theta:=\mathfrak s+\mathfrak f \in A_1(S)$. Next, we set \begin{equation*}
        \bar{\mathfrak p}_0:= \pi_1^*\Theta, \ \bar{\mathfrak p}_2 :=\pi_2^*\Theta, \ \bar{\mathfrak p}_1:=[\Delta_{S/\mathbb P^1}]-\bar{\mathfrak p}_0-\bar{\mathfrak p}_2 
    \end{equation*}where $\pi_1,\pi_2: S\times_{\mathbb P^1}S\rightarrow S$ are the two projections. It can be checked by a direct computation that $\bar{\mathfrak p}_0, \bar{\mathfrak p}_1, \bar{\mathfrak p}_2\in \operatorname{Corr}^0_{\mathbb P^1}(S,S)$ are orthogonal projectors and we have the following motivic decomposition in $\operatorname{CHM}(\mathbb P^1)$\begin{equation*}
        \mathfrak h(S)=\mathfrak h_0(S)\oplus \mathfrak h_1(S)\oplus \mathfrak h_2(S), \ \mathfrak h_i(S)=(S,\bar{\mathfrak p}_i,0)
    \end{equation*}which specializes to a splitting of the perverse filtration on $R\pi_*\mathbb Q_S$. The following formulae have been established in \cite{bae2024generalizedbeauvilledecompositions}. If $\mathfrak F'$ is the Fourier transform induced by the cycle class $\operatorname{td}(-T_{S\times_{\mathbb P^1}S})^{\frac{1}{2}}\cap \tau (\mathcal P)$ with Fourier inverse $\mathfrak F'^{-1}$, then we get \begin{equation*} \begin{split}
        \mathfrak F'([S])=-\Theta+c_S, \ \mathfrak F'(c_S)=\mathfrak f, \ \mathfrak F'(\Theta)=[S]-\mathfrak f, \ \mathfrak F'(\mathfrak f)=-c_S, \\ \mathfrak F'^{-1}([S])=\Theta+c_S, \ \mathfrak F'^{-1}(c_S)= -\mathfrak f, \ \mathfrak F'^{-1}(\Theta)=-[S]-\mathfrak f, \ \mathfrak F'^{-1}(\mathfrak f)=c_S.
    \end{split}\end{equation*} 
 It follows that $\mathfrak F'^{-1}(c_2(T_S))\in A^1(S)$.  By Corollary \ref{fourier-perverse-codimension}, we have $\mathfrak F'^{-1} \left ( P_1A^2(S)\right )\subset A^{\geq 1+ 2 -1}(S)=A^{\geq 2}(S)$ which shows that $c_2(T_S)\in P_2A^{2}(S)\backslash P_1A^2(S)$.   
 \end{example}
 \bibliographystyle{alpha}
 \bibliography{citations}
\end{document}